\newtheorem{theorem}{Theorem}[section]
\newtheorem{lemma}[theorem]{Lemma}
\newtheorem{proposition}[theorem]{Proposition}
\newtheorem{remark}[theorem]{Remark}
\newtheorem{corollary}[theorem]{Corollary}
\newtheorem{example}[theorem]{Example}
\font\tenmsbm=msbm10\textfont
\font\sevenmsbm=msbm7
\def\EE{\mathbb{E}}
\def\RR{\mathbb{R}}
\def\bR{\mathbb{R}}
\def\cP{\mathcal{P}}
\def\al{{\alpha}}\def\be{{\beta}}
\def\ep{{\epsilon}}\def\ga{{\gamma}}
\def\la{{\lambda}}
\def\La{{\Lambda}}
\def\<{\left<}\def\>{\right>}
\def\({\left(}\def\){\right)}
\def\goto{{\rightarrow}}
\def\blemma{\begin{lemma}}\def\elemma{\end{lemma}}
 \def\bproposition{\begin{prop}}\def\eproposition{\end{prop}}
 \def\btheorem{\begin{theorem}}\def\etheorem{\end{theorem}}
 \def\bcorollary{\begin{corollary}}\def\ecorollary{\end{corollary}}
\def\beqlb{\begin{eqnarray}}\def\eeqlb{\end{eqnarray}}
 \def\beqnn{\begin{eqnarray*}}\def\eeqnn{\end{eqnarray*}}
\begin{document}
\title
{Some support properties for \\a class of $\La$-Fleming-Viot processes}
\thanks{The research is supported by NSERC}
\author{Huili Liu and Xiaowen Zhou}
\address{Huili Liu: Department of Mathematics and Statistics, Concordia University,
1455 de Maisonneuve Blvd. West, Montreal, Quebec, H3G 1M8, Canada}
\email{l$\_$huili@live.concordia.ca}
\address{Xiaowen Zhou: Department of Mathematics and Statistics, Concordia University,
1455 de Maisonneuve Blvd. West, Montreal, Quebec, H3G 1M8, Canada}
\email{xiaowen.zhou@concordia.ca } \subjclass[2000]{Primary:
60G57; Secondary: 60J80, 60G17}
\date{\today}
\keywords{$\La$-Fleming-Viot process, measure-valued process, $\La$-coalescent,
lookdown construction, ancestry process, compact support, modulus of continuity, Hausdorff dimension.}
\begin{abstract}
For a class of $\La$-Fleming-Viot processes with underlying Brownian
motion whose associated $\La$-coalescents come down from infinity,
we prove a one-sided modulus of continuity result  for their ancestry
processes recovered from the lookdown construction of Donnelly and
Kurtz. As applications, we first show that such a $\La$-Fleming-Viot
support process has one-sided modulus of continuity (with modulus
function $C\sqrt{t\log\(1/t\)}$) at any fixed time. We also show
that the support is compact simultaneously at all positive times,
and given the initial compactness, its range is uniformly compact
over any finite time interval. In addition, under a mild condition
on the $\Lambda$-coalescence rates, we find a uniform upper bound on
Hausdorff dimension of the support and an upper bound on Hausdorff
dimension of the range.
\end{abstract}
\maketitle \pagestyle{myheadings} \markboth{\textsc{Support properties for $\La$-Fleming-Viot processes}} {\textsc{H. L. Liu
and X. W. Zhou}}
\section{Introduction}
Fleming-Viot process arises as a probability-measure-valued
stochastic process on the distribution of allelic frequencies in a
selectively neutral population with mutation. We refer to Ethier and
Kurtz \cite{EtKu93} and Etheridge \cite{Eth12} for surveys on the
Fleming-Viot process and related mathematical models from population
genetics.

Moments of the classical Fleming-Viot process can be expressed in
terms of a dual process involving Kingman's coalescent and semigroup
for the mutation operator. The $\La$-Fleming-Viot process
generalizes the classical Fleming-Viot process by replacing
Kingman's coalescent with the $\La$-coalescent allowing multiple
collisions. Formally, the $\La$-Fleming-Viot process is a
Fleming-Viot process with general reproduction mechanism so that the
total number of children from a parent can be comparable to the size
of population. We refer to Birkner et al. \cite{BBCEMS05} for a
connection between mutationless $\La$-Fleming-Viot processes and
continuous state branching processes. In this paper we only consider
the Fleming-Viot process with Brownian mutation that can also be
interpreted as underlying spatial Brownian motion.

The support properties are interesting in the study of
measure-valued processes. For the Dawson-Watanabe superBrownian
motion arising as high density limit of empirical measures for near
critical branching Brownian motions,  the modulus of continuity and
the carrying dimensions have been studied systematically for its
support process. We refer to Chapter 7 of Dawson \cite{Dawson1},
Chapter 9 of Dawson \cite{Dawson2} and Chapter III of Perkins
\cite{perkin} and references therein for a collection of these
results. The proofs involve the historically cluster representation,
the Palm distribution for the canonical measure and estimates
obtained from PDE associated with the Laplace functional. For a
superBrownian motion with a general branching mechanism, Delmas
\cite{Delmas} obtained the Hausdorff dimension for its range using
Brownian snake representation with subordination.



However, the approaches for Dawson-Watanabe superBrownian motions do
not always apply to  Fleming-Viot processes which are not infinitely
divisible. Consequently, there are only a few results available for
Fleming-Viot support processes so far. The earliest work on the
compact support property for classical Fleming-Viot processes is due
to Dawson and Hochberg \cite{Dawson}. It was shown in \cite{Dawson}
that at any fixed time $T>0$ the classical Fleming-Viot process with
underlying Brownian motion has a compact support with  Hausdorff
dimension not greater than two. Using non-standard techniques
Reimers \cite{Rei} improved the above result by  showing that the
carrying dimension of the support is at most two simultaneously for
all positive times. Applying a generalized Perkins disintegration theorem,
the support dimension was found in Ruscher \cite{Ru09} for a
Fleming-Viot-like process obtained from mass normalization and time
change of superBrownian motion with stable branching.
The $\Lambda$-Fleming-Viot process does not have a compact support if the associated $\Lambda$-coalescent does not come down from infinity. Liu and Zhou
\cite{LZ} recently extended the results in \cite{Dawson} to a class
of $\Lambda$-Fleming-Viot processes whose associated
$\Lambda$-coalescents come down from infinity.  We are not aware of
any results on the modulus of continuity for Fleming-Viot support
processes although the modulus of continuity for superBrownian
motion support had been first recovered by Dawson et al. \cite{DIP89} more than twenty years ago
and further studied in Dawson and Vinogradov \cite{Dawson94} and in
Dawson el al. \cite{DHV96}.

The lookdown construction of Donnelly and Kurtz \cite{DK96,DK99a,
DK99b} is a powerful technique in the study of the Fleming-Viot
processes. Loosely speaking, the idea of lookdown construction is a
discrete representation that leads to a nice version of the
corresponding measure-valued process. The lookdown construction
naturally results in a genealogy process describing the genealogical
structure of the particles involved. In a sense it plays the role of
historical processes for Dawson-Watanabe superprocesses.

Donnelly and Kurtz \cite{DK96} first proposed the lookdown
construction of a system of countable particles embedded into the
classical Fleming-Viot process. They showed the duality between
classical Fleming-Viot process and Kingman's coalescent and
recovered some previous results on the classical Fleming-Viot
process using this explicit representation. This representation was
later extended  in Donnelly and Kurtz \cite{DK99b} via a modified
lookdown construction to a larger class of measure-valued processes
including the $\La$-Fleming-Viot processes and  the Dawson-Watanabe
superprocesses. Donnelly and Kurtz \cite{DK99a} also found a discrete representation for the classical Fleming-Viot models with selection and recombination.

Birkner and Blath \cite{BB} further discussed the modified
lookdown construction in \cite{DK99b} for the $\La$-Fleming-Viot
process with jump type mutation operator. They also described how to
recover the $\La$-coalescent from the modified lookdown
construction.

For the $\Xi$-coalescent allowing simultaneous multiple collisions,
a Poisson point process construction of the $\Xi$-lookdown model can
be found in Birkner et al. \cite{MJM} by extending the modified
lookdown construction of Donnelly and Kurtz \cite{DK99b}. It was
proved in \cite{MJM} that the empirical measure of the  exchangeable
particles converges almost surely in the Skorohod space of
measure-valued paths to the so called $\Xi$-Fleming-Viot process
with jump type mutation.

Using the modified lookdown construction of Donnelly and Kurtz, Liu
and Zhou \cite{LZ} proved that a class of $\La$-Fleming-Viot
processes with underlying Brownian motion have compact supports at
any fixed time $T>0$ provided the associated $\La$-coalescents come
down from infinity fast enough. Further,  both lower and upper
bounds were found in \cite{LZ} on Hausdorff dimension for support of
the $\La$-Fleming-Viot process at the time $T$, where the exact Hausdorff
dimension was shown to be two whenever the associated
$\Lambda$-coalescent has a nontrivial Kingman component. These
results generalize the previous results of Dawson and Hochberg
\cite{Dawson} on the classical Fleming-Viot processes.

In this paper, for the class of $\La$-Fleming-Viot processes in
\cite{LZ}, we refine the arguments in \cite{LZ} to further study
their support properties. Our first result is a one-sided modulus of
continuity type result for the ancestry process defined via the
lookdown construction. The second  result is the one-sided modulus
of continuity for the $\La$-Fleming-Viot support process at any
fixed time. The third is on the uniform compactness of the
$\La$-Fleming-Viot support and the associated range.  Under an
additional mild condition on the coalescence rates of the
corresponding $\La$-coalescent, we also obtain two results on
support dimensions. One result is an uniform upper bound on
Hausdorff dimension for the support at all positive times. The other
is an upper bound on Hausdorff dimension for the range of the
$\La$-Fleming-Viot support process. Again, the lookdown construction
plays a key role throughout our arguments.

The paper is arranged as follows. In Section \ref{s2} we introduce
the $\La$-coalescent and the corresponding coming down from infinity
property. In Section \ref{s3} we briefly discuss the lookdown
construction for $\La$-Fleming-Viot process with underlying Brownian
motion and the associated ancestry process recovered from the
lookdown construction. In Section \ref{s4} we present the main
results of this paper together with corollaries and propositions. Proofs of the main results are deferred to Section
\ref{s5}.

\section{The $\La$-coalescent}\label{s2}

\subsection{The $\La$-coalescent}
We first introduce some notation. Put $[n]\equiv\{1,\ldots,n\}$ and
$[\infty]\equiv\{1,2,\ldots\}$. An ordered {\it partition} of $D\subset
[\infty]$ is a countable collection $\pi=\{\pi_i, i=1,2,\ldots\}$ of
disjoint {\it blocks} such that $\cup_{i}\pi_i=D$ and
$\min\pi_i<\min\pi_j$ for $i<j$. Then blocks in $\pi$ are ordered by
their least elements.

Denote by $\mathcal{P}_n$ the set of ordered partitions of $[n]$ and
by $\mathcal{P}_\infty$  the set of ordered partitions of
$[\infty]$. Write
$\mathbf{0}_{[n]}\equiv\left\{\{1\},\ldots,\{n\}\right\}$ for the
partition of $[n]$ consisting of singletons and
$\mathbf{0}_{[\infty]}$ for the partition of $[\infty]$ consisting
of singletons. Given $n\in[\infty]$ and $\pi\in\cP_\infty$, let
$R_n(\pi)\in\cP_n$ be the restriction of $\pi$ to $[n]$.

Kingman's coalescent is a $\mathcal{P}_{\infty}$-valued time
homogeneous Markov process such that all different pairs of blocks
independently merge at the same rate. Pitman \cite{Pitman99} and
Sagitov \cite{Sag99} generalized the Kingman's coalescent to the
{\it $\La$-coalescent} which allows {\it multiple collisions}, i.e.,
more than two blocks may merge at a time. The $\La$-coalescent is
defined as a $\mathcal{P}_{\infty}$-valued Markov process $\Pi\equiv\(\Pi(t)\)_{t\geq 0}$ such that for each $n\in[\infty]$, its restriction to
$[n]$, $\Pi_n\equiv\(\Pi_n(t)\)_{t\geq 0}$ is a $\mathcal{P}_n$-valued Markov
process whose transition rates are described as follows: if there
are currently $b$ blocks in the partition, then each $k$-tuple of
blocks ($2\leq k\leq b$) independently merges to form a single block
at rate
\begin{eqnarray}\label{la_rate}
\la_{b,k}=\int_{[0,1]}x^{k-2}(1-x)^{b-k}\La(dx),
\end{eqnarray}
where $\La$ is a finite measure on $[0,1]$. It is easy to check that
the rates $\(\la_{b,k}\)$ are consistent so that for all $2\leq
k\leq b$,
\begin{equation}
\begin{split}\label{20120607eq1}
\la_{b,k}=\la_{b+1,k}+\la_{b+1, k+1}.
\end{split}
\end{equation}
Consequently, for any $1\leq m<n\leq\infty$, the coalescent process
$R_m\(\Pi_n(t)\)$ given $\Pi_n(0)=\pi_n$ has the same distribution
as $\Pi_m(t)$ given $\Pi_m(0)=R_m(\pi_n)$.

With the transition rates determined by (\ref{la_rate}), there
exists a one to one correspondence between $\La$-coalescents and
finite measures $\La$ on $[0,1]$.

For $n=2,3,\ldots$, denote by
\begin{equation}\label{la_n}
\la_n=\sum_{k=2}^n{n\choose k}\la_{n,k}
\end{equation}
the total coalescence
rate starting with $n$ blocks. It is clear that $\(\la_n\)_{n\geq2}$ is an increasing sequence, i.e., $\la_n\leq\la_{n+1}$ for any $n\geq2$. In addition, denote by
$$\gamma_n=\sum_{k=2}^n\(k-1\){n\choose k}\la_{n,k}$$
the rate at which the number of blocks decreases.

\subsection{Coming down from infinity}
Given any $\La$-coalescent $\Pi\equiv\(\Pi(t)\)_{t\geq 0}$ with $\Pi(0)=\mathbf{0}_{[\infty]}$, let $\#\Pi(t)$ be the number of blocks in the partition $\Pi(t)$. The $\La$-coalescent $\Pi$ {\it comes down
from infinity} if $$\mathbb{P}\(\#\Pi(t)<\infty\)=1$$ for all $t>0$ and it  {\it stays infinite} if $$\mathbb{P}\(\#\Pi(t)=\infty\)=1$$ for all $t>0$.
Suppose that the measure $\La$ has no atom at $1$.
It is shown by Schweinsberg \cite{Jason2000} that 
\begin{itemize}
\item the $\La$-coalescent comes down from infinity if and only if
$\sum_{n=2}^{\infty}\gamma_n^{-1}<\infty$;
\item the
$\La$-coalescent stays infinite if and only if
$\sum_{n=2}^{\infty}\gamma_n^{-1}=\infty$.
\end{itemize}
It is pointed out in Bertoin and Le Gall \cite{BerLeGa06} that for
\[\psi(q)=\int_{[0,1]}(e^{-qx}-1+qx)x^{-2}\Lambda(dx),\]
\[\sum_{n=2}^{\infty}\gamma_n^{-1}<\infty \text{\,\, if and only if \,\,} \int_a^\infty \frac{1}{\psi(q)}dq<\infty,\]
where the integral is finite for some (and then for all) $a>0$.

\begin{example}
In case of $\La=\delta_0$, the corresponding coalescent is Kingman's
coalescent and comes down from infinity.
\end{example}

\begin{example}
If $\be\in(0, 2)$ and $$\La(dx)=\frac{\Gamma(2)}{\Gamma(2-\be)\Gamma(\be)}x^{1-\be}\(1-x\)^{\be-1}dx,$$
the corresponding coalescent is {\it Beta$(2-\be,\be)$-coalescent}.
\begin{itemize}
\item In case of $\be\in\left(0,1\right]$, it stays infinite.
\item In case of $\be\in\left(1,2\right)$, it comes down from infinity.
\end{itemize}
\end{example}
\section{The $\La$-Fleming-Viot process and its lookdown construction}\label{s3}
In this section, we first discuss the lookdown construction of
$\La$-Fleming-Viot process with underlying Brownian motion. Then we explain
how to recover the $\La$-coalescent from the lookdown construction. Finally, we introduce the ancestry process for the $\La$-Fleming-Viot process from the lookdown construction.

\subsection{Lookdown construction of $\La$-Fleming-Viot process with underlying Brownian motion}
Donnelly and Kurtz \cite{DK99b} introduced a modified lookdown
construction with the empirical measure process converging to
measure-valued stochastic process. A key advantage of the lookdown
construction is its projective property. Intuitively, in the lookdown model each
particle is attached a ``level" from the set $\{1,2,\ldots\}$. The
evolution of a particle at level $n$ only depends on the evolution
of the finite particles at lower levels. This property allows us to
construct approximating particle systems, and their limit as
$n\rightarrow\infty$ in the same probability space.

Following Birkner and Blath \cite{BB}, we now give a brief
introduction on the modified lookdown construction of the $\Lambda$-Fleming-Viot process with underlying Brownian motion. Let
$$\(X_1(t),X_2(t),X_3(t),\ldots\)$$ be an $(\RR^d)^{\infty}$-valued random variable, where for any $i\in[\infty]$, $X_i(t)$
represents the spatial location of the particle at level $i$. We
require the initial values $\left\{X_i(0),i\in[\infty]\right\}$ to
be exchangeable random variables so that the limiting empirical
measure
$$\lim_{n\rightarrow\infty}\frac{1}{n}\sum_{i=1}^n\delta_{X_i(0)}$$
exists almost surely by de Finetti's theorem.

Let $\La$ be the finite measure associated to the $\La$-coalescent.
The reproduction in the particle system consists of two kinds of
birth events: the events of single birth  determined by measure
$\La(\{0\})\delta_0$ and the events of multiple births determined by
measure $\La$ restricted to $(0,1]$ that is denoted by
$\La_0$.

To describe the evolution of the system during events of single
birth, let $\{\mathbf{N}_{ij}(t): 1\leq i<j<\infty\}$ be independent
Poisson processes with common rate $\La(\{0\})$. At a jump time $t$ of
$\mathbf{N}_{ij}$, the particle at level $j$ looks down at the
particle at level $i$ and
 assumes its location (therefore, particle at level $i$ gives birth to a new particle). Values of  particles at levels
above $j$ are shifted accordingly, i.e., for $\Delta
{\mathbf{N}}_{ij}(t)=1$, we have
\begin{eqnarray}
X_k(t)=\begin{cases} X_k(t-), &\text{~~if $k<j$},\\
X_i(t-), &\text{~~if $k=j$},\\
X_{k-1}(t-), &\text{~~if $k>j$}.
\end{cases}
\end{eqnarray}

For those events of multiple births we can construct an independent
Poisson point process $\tilde{\mathbf{N}}$ on $\mathbb{R}^{+}\times\left(0,
1\right]$ with intensity measure $dt\otimes x^{-2}\La_0\(dx\)$. Let
$\{U_{ij}$, $i, j\in[\infty]\}$ be i.i.d. uniform $[0, 1]$ random
variables. Jump points $\{\(t_i, x_i\)\}$ for $\tilde{\mathbf{N}}$ correspond to
the multiple birth events. For $t\geq 0$ and  $J\subset [n]$ with $|J|\geq 2$,
define
\begin{eqnarray}\label{times}
\mathbf{N}_J^n(t)\equiv\sum_{i:t_i\leq t}\prod_{j\in
J}\mathbf{1}_{\{U_{ij}\leq x_i\}}\prod_{j\in[n]\backslash
J}\mathbf{1}_{\{U_{ij}> x_i\}}.
\end{eqnarray}
Then $\mathbf{N}_J^n(t)$ counts the number of birth events among the particles from
levels $\{1,2,\ldots,n\}$ such that exactly those at levels in $J$ are involved up to
time $t$. Intuitively, at a jump time $t_i$, a uniform coin is
tossed independently for each level. All the particles at levels $j$
with $U_{ij}\leq x_i$ participate in the lookdown event. More
precisely, those particles involved jump to the location of the
particle at the lowest level involved. The spatial locations of
particles on the other levels, keeping their original order, are
shifted upwards accordingly, i.e., if $t=t_i$ is the jump time and
$j$ is the lowest level involved, then
\begin{eqnarray*}
X_k(t)=
\begin{cases}
X_k(t-), \text{~for~} k\leq j,\\
X_j(t-), \text{~for~} k>j \text{~with~} U_{ik}\leq x_i,\\
X_{k-J_t^k}(t-), \text{~otherwise},
\end{cases}
\end{eqnarray*}
where $J_{t_i}^k\equiv\#\{m<k, U_{im}\leq x_i\}-1$.

Between jump times of the Poisson processes, particles at different
levels move independently according to Brownian motions in $\bR^d$.

We assume that  the above-mentioned
lookdown construction is carried out in a probability space $(\Omega, \mathcal{F}, \mathbb{P})$ .

For each $t>0$, $X_1(t),X_2(t),\ldots$ are known to be exchangeable
random variables so that
$$X(t)\equiv\lim_{n\rightarrow\infty}X^{(n)}(t)\equiv \lim_{n\rightarrow\infty}\frac{1}{n}\sum_{i=1}^n\delta_{X_i(t)}$$
exists almost surely by  de Finetti's theorem and follows the
probability law of the $\La$-Fleming-Viot process with underlying
Brownian motion. Further, we have that $X^{\(n\)}$ converges to $X$
in the path space $D_{M_1\(\RR^d\)}([0,\infty))$ equipped with the
Skorohod topology, where $M_1\(\RR^d\)$ denotes the space of
probability measures on $\RR^d$ equipped with the topology of weak
convergence. See Theorem 3.2 of \cite{DK99b}.

In the sequel we
always write $X$ for such a $\La$-Fleming-Viot process.
Write supp\,$\mu$ for the closed support of measure $\mu$.

\begin{lemma}\label{support}
For any  $t\geq 0$, $\mathbb{P}$-a.s.  the spatial locations
of the countably many particles in the lookdown construction satisfy
$$\left\{X_1(t),X_2(t),X_3(t),\ldots\right\}\subseteq\text{supp}\,X(t).$$
\end{lemma}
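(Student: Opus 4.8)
The plan is to exploit the exchangeability of the particle system together with de Finetti's theorem, which is precisely the structural fact underlying the construction of $X(t)$. Since $X_1(t), X_2(t), \ldots$ are exchangeable and $X(t)$ is by definition their almost sure empirical limit, de Finetti's theorem identifies $X(t)$ as the directing random measure of the sequence: conditionally on $X(t)$, the locations $\{X_i(t)\}_{i\geq 1}$ are i.i.d.\ with common law $X(t)$. The lemma then reduces to the elementary fact that a random variable distributed according to a probability measure $\mu$ on $\RR^d$ lies in $\text{supp}\,\mu$ almost surely, because $\mu$ assigns no mass to the complement of its closed support.

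To make this precise I would fix a level $i$ and first show that $\mathbb{P}(X_i(t)\in\text{supp}\,X(t))=1$. Let $\mathcal{B}$ denote the countable collection of open balls in $\RR^d$ with rational centers and rational radii. By the definition of the closed support, a point $x$ fails to lie in $\text{supp}\,\mu$ exactly when some ball $B\in\mathcal{B}$ contains $x$ and satisfies $\mu(B)=0$; hence
\[
\left\{X_i(t)\notin\text{supp}\,X(t)\right\}=\bigcup_{B\in\mathcal{B}}\left\{X_i(t)\in B,\ X(t)(B)=0\right\},
\]
which is a countable union of measurable events. Using the conditional i.i.d.\ structure, $\mathbb{P}(X_i(t)\in B\mid X(t))=X(t)(B)$, so for each fixed $B$ one has
\[
\mathbb{P}\left(X_i(t)\in B,\ X(t)(B)=0\right)=\mathbb{E}\left[\mathbf{1}_{\{X(t)(B)=0\}}\,X(t)(B)\right]=0.
\]
Summing over the countably many $B\in\mathcal{B}$ then gives $\mathbb{P}(X_i(t)\notin\text{supp}\,X(t))=0$.

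Finally I would take a countable intersection over all levels $i\geq 1$: since each event $\{X_i(t)\in\text{supp}\,X(t)\}$ has probability one, so does their intersection, which is exactly the assertion that $\{X_1(t),X_2(t),\ldots\}\subseteq\text{supp}\,X(t)$ holds $\mathbb{P}$-almost surely. The only delicate points here are measure-theoretic rather than probabilistic. One must know that $X(t)$ is genuinely the de Finetti directing measure, so that the conditional law of each $X_i(t)$ given $X(t)$ is $X(t)$ itself; this is guaranteed by the construction of $X(t)$ as the almost sure empirical limit recalled just before the lemma. One must also ensure that the random closed set $\text{supp}\,X(t)$ interacts measurably with the particle locations, and the reduction to rational balls above is precisely what takes care of this. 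I expect this measurability bookkeeping to be the main technical point, while the probabilistic content is entirely contained in the one-line support estimate.
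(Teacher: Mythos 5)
Your proposal is correct and follows essentially the same route as the paper: both invoke exchangeability of $(X_i(t))_{i\geq 1}$ and de Finetti's theorem to identify $X(t)$ as the directing measure, so that conditionally on $X(t)$ the particles are i.i.d.\ samples from it, whence each lies in $\text{supp}\,X(t)$ almost surely. Your rational-ball computation merely makes explicit the measure-theoretic step the paper leaves implicit, so the two arguments coincide in substance.
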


\begin{proof}
In the lookdown construction, $\(X_n(t)\)_{n\geq 1}$ are
exchangeable at any time $t\geq0$. By de Finetti's theorem (cf.
Aldous \cite{Aldous}) such a system is a mixture of
i.i.d. sequence, i.e., given the empirical measure
$$X(t)=\lim_{n\rightarrow\infty}\frac{1}{n}\sum_{i=1}^{n}\delta_{X_i(t)},$$
 the random variables $\{X_i(t), i=1,2,\ldots\}$ are
 jointly distributed as
 i.i.d. samples from the directing measure $X(t)$. Therefore,
$X_n(t)\in  \text{supp}\,X(t)$ for any $n\in[\infty]$.
\end{proof}

\subsection{The $\La$-coalescent in the lookdown construction}
The birth events induce a family structure to the particle system so
we can present the {\it genealogy process} first introduced in
Donnelly and Kurtz \cite{DK99b}. For any $0\leq t\leq s$ and
$n\in[\infty]$, denote by $L_n^s(t)$ the ancestor's level at time
$t$  for the particle with level $n$ at time $s$. Given $s$ and $n$,
$L_n^s(t)$ is nondecreasing and left continuous in $t$. Moreover,
the genealogy processes $(L_n^s)_{s\geq 0}, n=1,2,\ldots$ satisfy
the equations
\begin{eqnarray*}
\begin{split}
L_n^s(t)=n&-\sum_{1\leq i<j<n}\int_{t-}^s\mathbf{1}_{\{L_n^s(u)>j\}}d\mathbf{N}_{ij}(u)\\
&-\sum_{1\leq i<j\leq n}\int_{t-}^s\(j-i\)\mathbf{1}_{\{L_n^s(u)=j\}}d\mathbf{N}_{ij}(u)\\
&-\sum_{J\subset
[n]}\int_{t-}^s\(L_n^s\(u\)-\min J\)\mathbf{1}_{\{L_n^s\(u\)\in
J\}}d\mathbf{N}_J^n(u)\\
&-\sum_{J\subset
[n]}\int_{t-}^s\(|J\cap\{1,\ldots,L_n^s\(u\)\}|-1\)\times
\mathbf{1}_{\{L_n^s\(u\)>\min J, L_n^s\(u\)\not\in
J\}}d\mathbf{N}_J^n\(u\).
\end{split}
\end{eqnarray*}
Given $T>0$, for any $0\leq t\leq T$ and $i\in[\infty]$, $L_i^T\(T-t\)$ represents the
ancestor's level at time $T-t$ of the particle with level $i$ at
time $T$ and $X_{L_i^T\(T-t\)}\((T-t)-\)$ represents that ancestor's
location.

Write $\big(\Pi^T(t)\big)_{0\leq t\leq T}$ for the
$\mathcal{P}_\infty$-valued process such that $i$ and $j$ belong to
the same block of $\Pi^T(t)$ if and only if $L_i^T(T-t)=L_j^T(T-t)$,
i.e., $i$ and $j$ belong to the same block if and only if the two
particles with levels $i$ and $j$, respectively, at time $T$ share a
common ancestor at time $T-t$. The process $\big(\Pi^T(t)\big)_{
0\leq t\leq T}$ turns out to have the same law as the
$\La$-coalescent running up to time $T$. See Donnelly and Kurtz
\cite{DK99b} and Birkner and Blath \cite{BB}.

The next property of the genealogy process can be found in Lemma 3.1
of \cite{LZ}.
\begin{lemma}\label{level}
For any fixed $T>0$, let $\(\Pi^{T}(t)\)_{0\leq t\leq T}$ be
the $\La$-coalescent recovered from the lookdown
construction. Then given $t\in[0,T]$ and the ordered random
partition $\Pi^T(t)=\left\{\pi_l(t):
l=1,\ldots,\#\Pi^T(t)\right\}$, we have
$$L_j^T\(T-t\)=l\text{~~for any~~}j\in\pi_l(t).$$
\end{lemma}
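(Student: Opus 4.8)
The plan is to exploit the fact that the ancestry $t\mapsto L_j^T(t)$ is a left-continuous step function that changes only at the jump times of the driving Poisson processes $\mathbf{N}_{ij}$ and $\mathbf{N}_J^n$, and to track how the \emph{lowest-level descendant} of each ancestor evolves forward in time. By the consistency of the rates $(\la_{b,k})$ and the projective property of the lookdown construction, $R_n\big(\Pi^T(t)\big)$ agrees with the restriction of the coalescent to $[n]$, and $L_j^T(T-t)\le n$ whenever $j\le n$. Since only finitely many lookdown events touch the levels $\{1,\dots,n\}$ during $[T-t,T]$, it suffices to prove the statement for each finite restriction and then let $n\to\infty$; this induction on the number of relevant jumps is what makes the argument clean even though the coalescent comes down from infinity.

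First I would record two structural properties of a single elementary birth event. Reading off the shift rules, at a single-birth time of $\mathbf{N}_{ij}$ the post-event particle at level $k$ descends from pre-event level $k$ for $k<j$, from level $i$ for $k=j$, and from level $k-1$ for $k>j$; at a multiple-birth time the analogous bookkeeping is governed by $J_t^k$. In both cases the map sending a pre-event level to the \emph{least} post-event level carrying its value is strictly increasing, and its range is an initial segment $\{1,\dots,m\}$ of $\mathbb{N}$ (the upward shifts fill in any gap left by a merger, and level $1$ is never displaced). Composing these one-step maps over all jumps in $[T-t,T]$ yields the two facts I need: (i) the map $a\mapsto\min\{\,j:L_j^T(T-t)=a\,\}$ is strictly increasing on the set of occupied ancestor levels; and (ii) that set equals $\{1,2,\dots,\#\Pi^T(t)\}$, with $L_1^T(T-t)=1$.

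Granting (i) and (ii), the lemma follows quickly. By the defining equivalence of $\Pi^T(t)$, each block is a level set $\pi_l(t)=\{\,j:L_j^T(T-t)=a_l\,\}$ for a unique occupied ancestor level $a_l$, so that $\min\pi_l(t)=\min\{\,j:L_j^T(T-t)=a_l\,\}$. Property (i) says precisely that $a_l\mapsto\min\pi_l(t)$ is strictly increasing, hence ordering the blocks by least element coincides with ordering them by their ancestor level $a_l$. By (ii) the ancestor levels are exactly $1,2,\dots,\#\Pi^T(t)$ with no gaps, so the $l$-th block in the least-element order carries ancestor level $a_l=l$; that is, $L_j^T(T-t)=l$ for every $j\in\pi_l(t)$, as claimed.

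I expect the main obstacle to be the careful verification of the strict-monotonicity and no-gap claims (i)--(ii) for the \emph{multiple}-birth events, where several levels coalesce to the lowest involved level and the remaining particles are reindexed through $J_t^k=\#\{m<k,\ U_{im}\le x_i\}-1$; one must check that this reindexing neither creates a gap in the occupied levels nor inverts the order of the lowest descendants. The passage from the finite restrictions to the $\mathcal{P}_\infty$-valued statement is routine given the consistency relation, but I would state it explicitly so that the accumulation of infinitely many jumps near time $T$ causes no difficulty.
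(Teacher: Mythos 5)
The paper does not actually prove this lemma: it is quoted verbatim from Lemma~3.1 of \cite{LZ}, so there is no internal proof to compare against. Judged on its own merits, your strategy --- per-event bookkeeping of the ancestor maps, strict monotonicity of least descendants, a no-gap property for occupied ancestor levels, and reduction to finite restrictions followed by $n\to\infty$ --- is the natural route, and your deduction of the lemma from your facts (i) and (ii) is sound.

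Two points need repair, one local and one structural. Locally, your one-step claim is misstated: the map sending a pre-event level to its least post-event descendant level is indeed strictly increasing, but its range is generally \emph{not} an initial segment. For a single-birth event of $\mathbf{N}_{13}$ the least descendants of pre-levels $1,2,3,4,\dots$ are $1,2,4,5,\dots$ (post-level $3$ carries a copy of pre-level $1$, so it is not a least descendant); note also that a strictly increasing map whose range is an initial segment would have to be the identity, so the statement as written cannot be what you intend. The correct one-step facts are that every pre-event level retains at least one post-event descendant and the least-descendant map is strictly increasing; equivalently, the per-event \emph{ancestor} map carries initial segments of post-event levels to initial segments of pre-event levels (for a multiple-birth event with lowest participant $j$, the non-participant reindexing $k\mapsto k-J_t^k$ hits the consecutive values $j+1,j+2,\dots$, which is what fills the gaps). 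Structurally, the forward invariant ``every pre-event level keeps a descendant'' cannot be composed over all of $[T-t,T]$: there are a.s.\ infinitely many birth events there, and under coming down from infinity all but finitely many time-$(T-t)$ particles leave \emph{no} time-$T$ descendants at finite levels (their lines are pushed off to infinity), so a naive composition would wrongly give $\#\Pi^T(t)=\infty$. Your restriction to $[n]$ is exactly the right fix --- ancestors of levels $\le n$ stay $\le n$ and only finitely many events touch $[n]$ --- but the induction must then be run \emph{backward} from time $T$ with the invariant ``the set of time-$u$ ancestor levels of $\{1,\dots,n\}$-at-$T$ is an initial segment, with strictly increasing least descendants,'' using the initial-segment preservation above; within the window a pre-event level can perfectly well lose all of its relevant descendants. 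With these corrections your outline closes, and the passage $n\to\infty$ is, as you say, routine by consistency of the restrictions.
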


\subsection{Ancestry process} For any $T>0$,
denote by
$$\(X_{1,s},X_{2,s},X_{3,s},\ldots\)_{0\leq s\leq T}$$ the  {\it ancestry process}
with $X_{i,s}$ defined by
\begin{equation}
X_{i,s}\(t\)\equiv
X_{L_i^s\(t\)}\(t-\) \text{~~for~~}0\leq t\leq s.\\
\end{equation}
Intuitively $X_{i,s}$ keeps track  of locations for all the
ancestors of the particle with level $i$ at time $s$.

For any $s\geq 0$, we can recover the $\La$-coalescent $\(\Pi^s(t)\)_{0\leq t\leq s}$ from the lookdown construction. For any $0\leq r<s$, set
$$N^{r,s}\equiv\#\Pi^{s}\(s-r\)$$
and
$$\Pi^{s}\(s-r\)\equiv\{\pi_l: 1\leq l\leq N^{r,s}\},$$
where $\pi_l\equiv\pi_l(r,s),\  1\leq l\leq N^{r,s}$ are all the
disjoint blocks of $\Pi^{s}\(s-r\)$ ordered by their least
elements.
Let $H(r,s)$ be the maximal dislocation between the countably many
particles at time $s$ and their respective ancestors at time $r$. Applying Lemma \ref{level}, we have
\begin{eqnarray*}
\begin{split}
H\(r,s\)\equiv&\max_{1\leq l\leq
N^{r,s}}\max_{j\in\pi_{l}}\left|X_j(s)-X_{L_j^{s}(r)}\(r-\)\right|\\
=&\max_{1\leq l\leq
N^{r,s}}\max_{j\in\pi_l}\left|X_j(s)-X_l(r-)\right|.
\end{split}
\end{eqnarray*}
\section{Some properties of the $\La$-Fleming-Viot process}\label{s4}
\subsection{Main results}

For any $T>0$, let $(\Pi^T(t))_{ 0\leq t\leq T}$ be the $\La$-coalescent
recovered from the lookdown construction with
$\Pi^T(0)=\mathbf{0}_{[\infty]}$.
Write $\Pi\equiv(\Pi(t))_{t\geq 0}$ for the unique (in law) $\Lambda$-coalescent such that $(\Pi(t))_{ 0\leq t\leq T}$ has the same distribution as $(\Pi^T(t))_{ 0\leq t\leq T}$. We call $\Pi$ the $\Lambda$-coalescent  associated to the $\Lambda$-Fleming-Viot process $X$.

For any positive integer $m$, set
\begin{equation}\label{711}
T_m\equiv \inf\big\{t\geq0: \#\Pi(t)\leq m\big\}
\end{equation}
with the convention $\inf\emptyset=\infty$.

Given $\eta>0$, for any Borel set ${A}\subset\RR^d$, let
$\mathbb{B}\({A},\eta\)$ be its {\it closed $\eta$-neighborhood} such that
$$\mathbb{B}\({A},\eta\)\equiv\overline{\bigcup_{x\in A}\mathbb{B}\(x,\eta\)},$$
where $\mathbb{B}\(x,\eta\)$ denotes the closed ball centered at $x$
with radius $\eta$.

We now recall the definition of {\it Hausdorff dimension}. Given
$A\subset\mathbb{R}^d$ and $\be>0$, $\eta>0$, let
$${\mathcal{H}}_{\eta}^{\be}\(A\)\equiv\inf_{\{S_l\}\in{\varphi}_{\eta}}\sum_l{d\(S_l\)}^{\be},$$
where $d\(S_l\)$ denotes the diameter of ball $S_l$ in
$\mathbb{R}^d$ and $\varphi_{\eta}$ denotes the collection of {\it
$\eta$-covers} of set $A$ by balls with diameters at most $\eta$.
The Hausdorff
$\be$-measure of $A$ is defined by
$${\mathcal{H}}^{\be}(A)=\lim_{\eta\rightarrow 0}{\mathcal{H}}_{\eta}^{\be}\(A\).$$ The Hausdorff dimension of $A$ is defined
by
$$\text{dim~} A\equiv\inf\big\{\be>0: {\mathcal{H}}^{\be}\(A\)=0\big\}=\sup\big\{\be>0: {\mathcal{H}}^{\be}\(A\)=\infty\big\}.$$

Recall that $X$ is the $\La$-Fleming-Viot process with underlying Brownian motion. For any subset
$\mathcal{I}\subset\RR\cap[0,\infty)$, let
$$\mathcal{R}(\mathcal{I})\equiv\overline{\cup_{t\in
\mathcal{I}}\,\text{supp}\,X(t)}$$
be the range of supp\,$X$ on the time interval $\mathcal{I}$.

Throughout the paper, we always write $C$ or $C$ with subscript for
a positive constant and write $C(x)$
for a constant depending on $x$ whose values might vary from place to
place.
The main results of this paper are the following theorems.
We defer the proofs to Section \ref{s5}.

\noindent
{\bf Assumption I}:
There exists a constant $\al>0$ such that the associated $\La$-coalescent $\Pi$ satisfies $$\limsup_{m\rightarrow\infty}m^{\al}\EE T_m <\infty.$$

\begin{theorem}\label{th1}
Under Assumption I and for any $T>0$, there exist a positive random
variable $\theta\equiv\theta\(T,d,\al\)<1$ and a constant $C\equiv
C(d,\al)$ such that $\mathbb{P}$-a.s. for all $r,s\in [0,T]$
satisfying $0<s-r\leq\theta $, we have
\begin{eqnarray}\label{mod1}
\begin{split}
H\(r,s\) \leq &C\sqrt{\(s-r\)\log\(1/\(s-r\)\)}.
\end{split}
\end{eqnarray}
\end{theorem}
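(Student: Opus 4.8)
The plan is to exploit the fact that, by construction, the driving Brownian motions are independent of the genealogy, so I would condition on the entire ancestry/coalescent structure (all blocks $\pi_l$ and all counts $N^{r,s}$) and run every spatial estimate conditionally. The first ingredient is a one-lineage bound: by Lemma \ref{level} the location path $t\mapsto X_{L_j^s(t)}(t-)$ of a single ancestral lineage is a concatenation of independent Brownian increments, so over a time interval of length $\delta$ its maximal displacement from the starting point has the law of $\sup_{0\le u\le\delta}|W_u|$ for a standard $d$-dimensional Brownian motion $W$, whence the reflection principle and Gaussian tails give
\[
\mathbb{P}\Big(\sup_{0\le u\le\delta}|W_u|\ge a\Big)\le C_d\exp\!\big(-a^2/(2d\delta)\big).
\]
The second ingredient is a counting bound extracted from Assumption I: since $\mathbb{P}(\#\Pi(\tau)>m)=\mathbb{P}(T_m>\tau)\le \mathbb{E}[T_m]/\tau\le Cm^{-\alpha}/\tau$ for large $m$, the number of ancestral lineages $N^{\,s-\tau,\,s}=\#\Pi^s(\tau)$ of the time-$s$ population, looking back a time $\tau$, is at most a constant multiple of $\tau^{-1/\alpha}$ with overwhelming probability.

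Fix $r<s$ and write $h=s-r$. I would introduce the geometric time mesh $u_k=s-h2^{-k}$, $k\ge0$ (so $u_0=r$ and $u_k\uparrow s$), which is fine near $s$, where lineages are abundant, and coarse near $r$, where they are few. The crucial deterministic inequality is that for every descendant $j$ of an ancestor $l$,
\[
|X_j(s)-X_l(r-)|\le\sum_{k\ge0}\big|\,\text{displacement of $j$'s lineage over }[u_k,u_{k+1}]\,\big|,
\]
so that, taking maxima and using $\max\sum\le\sum\max$, we get $H(r,s)\le\sum_k M_k$, where $M_k$ is the largest displacement among the lineage segments spanning $[u_k,u_{k+1}]$. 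The key point is that the number of \emph{distinct} such segments is at most $N^{u_{k+1},s}\lesssim(2^{k+1}/h)^{1/\alpha}$; this is exactly what tames the otherwise infinitely many descendants, reducing each scale to a finite union bound over genuinely Brownian displacements of duration $\delta_k=h2^{-(k+1)}$.

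With the decomposition in hand I would choose thresholds $a_k\asymp\sqrt{\delta_k(\log(1/h)+k)}$, so that $\sum_k a_k\asymp\sqrt{h}\sum_k 2^{-k/2}\sqrt{\log(1/h)+k}\le C\sqrt{h\log(1/h)}$, matching the claimed modulus. Combining the one-lineage bound with the counting bound, the conditional probability that $M_k$ exceeds $a_k$ is at most $N^{u_{k+1},s}\cdot C_d\exp(-a_k^2/(2d\delta_k))\lesssim h^{-1/\alpha}2^{k/\alpha}\,h^{\,c^2/(2d)}2^{-c^2k/(2d)}$, which is summable in $k$ and small once the proportionality constant $c$ is taken large enough that $c^2/(2d)>1/\alpha$. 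Finally I would discretize $(r,s)$ over a dyadic grid in $[0,T]$, apply Borel--Cantelli to obtain the bound simultaneously for all grid pairs with $s-r$ below a random threshold $\theta\equiv\theta(T,d,\alpha)<1$, and upgrade to all $r,s$ by the monotonicity of $H$ and of the lineage counts in the endpoints.

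The main obstacle I anticipate is the honest control of the \emph{random} counts $N^{u_{k+1},s}$ jointly across all scales $k$ and all discretized time pairs, since Assumption I furnishes only a first-moment estimate on $T_m$. The resolution is to condition on the genealogy (independent of the motions), to run the Gaussian union bound on the event that the counts obey the $\tau^{-1/\alpha}$ bound at every scale, and to absorb the exceptional probabilities---again summable by the counting estimate---into the same Borel--Cantelli step. A secondary technical point is the passage from the finite level-$n$ systems to the full countable system in the maximal displacement, which follows from the projective property of the lookdown construction together with Lemma \ref{support}.
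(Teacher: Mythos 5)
Your overall architecture is the paper's, in dual form: where you take a deterministic geometric mesh $u_k=s-h2^{-k}$ and let the lineage counts be random, the paper subdivides each dyadic interval at the \emph{random} times $T^{n,k}_{8^{(n+l)/\alpha}}$ at which the block count drops below prescribed deterministic thresholds, so the counts are deterministic and the subinterval lengths are random. Either decomposition can work, but your version as written has a genuine quantitative gap at the counting step. Markov's inequality gives $\mathbb{P}\big(N^{s-\tau,s}>m\big)=\mathbb{P}(T_m>\tau)\le \mathbb{E}[T_m]/\tau\le C m^{-\alpha}/\tau$, and with your choice $m\asymp\tau^{-1/\alpha}$ (i.e.\ $m\asymp(2^{k+1}/h)^{1/\alpha}$ at scale $k$) this bound is $O(1)$ --- a constant, not ``overwhelming.'' Summed over all scales $k\ge 0$ and over the $\sim 2^nT$ dyadic grid pairs at level $n$, the exceptional probabilities diverge, so the Borel--Cantelli step into which you propose to ``absorb'' them fails: the exceptional events are exactly \emph{not} summable by the first-moment counting estimate at that threshold. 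This is why the paper inflates the count: it compares $T_{8^{(n+l)/\alpha}}$ (count $\approx\tau^{-3/\alpha}$ rather than $\tau^{-1/\alpha}$) against duration $2^{-(n+2l)}$, getting failure probability $C2^{-(2n+l)}$ per subinterval, which is summable over $l$ and over the $2^nT$ intervals. The repair on your side is routine --- take $m_k$ polynomially larger than $\tau^{-1/\alpha}$; the Gaussian union bound absorbs any fixed polynomial inflation of the count once the constant $c$ in $a_k\asymp c\sqrt{\delta_k(\log(1/h)+k)}$ is enlarged --- but the proposal's bookkeeping as stated is incorrect at its central step.

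A secondary flaw is the final upgrade ``by the monotonicity of $H$ and of the lineage counts in the endpoints'': $H(r,s)$ is \emph{not} monotone in either endpoint, since the particles move --- enlarging $[r,s]$ changes both the population sampled at the right endpoint and the ancestor locations at the left, with no pathwise comparison. The paper instead proves the bound for all dyadic $r,s\in S^T$ by chaining with the subadditivity $H(r,s)\le H(r,t)+H(t,s)$ (Lemma \ref{ine}), and then passes to arbitrary real times through a four-term limiting argument (its display (\ref{914e1})) that uses the right continuity of $X_j(\cdot)$, the continuity of the ancestral path $X_{L_j^{s_n}(\cdot)}(\cdot-)$, and the fact that for $s_n$ close enough to $s$ no lookdown event involves levels $\{1,\dots,j\}$ in $(s,s_n]$. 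You need some version of this continuity argument; monotonicity is not available. Your remaining ingredients --- the single-lineage Brownian reflection bound, the identification of the number of distinct segments over $[u_k,u_{k+1}]$ with $N^{u_{k+1},s}$, the choice $a_k\asymp\sqrt{\delta_k(\log(1/h)+k)}$ summing to $C\sqrt{h\log(1/h)}$, and the independence of motions from the genealogy --- all match the paper's Lemmas \ref{0108} and \ref{121302} and are sound.
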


\begin{theorem}\label{modulus2}
Under Assumption I and given any fixed $t\geq0$, there exist a
positive random variable $\theta\equiv\theta\(t,d,\al\)<1$ and
a constant $C\equiv C(d,\al)$ such that for any $\Delta t$ with
$0<\Delta t\leq\theta$ we have $\mathbb{P}$-a.s.
\begin{eqnarray}
\begin{split}
\text{supp}\,X\(t+\Delta t\)\subseteq\mathbb{B}\(\text{supp}\,X(t), C\sqrt{\Delta t\log\(1/\Delta t\)}\).
\end{split}
\end{eqnarray}
\end{theorem}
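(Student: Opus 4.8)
The plan is to read Theorem \ref{modulus2} off Theorem \ref{th1} by specializing the earlier endpoint of the ancestry process to the fixed time $t$ and letting the later endpoint be $t+\Delta t$. I would fix $T=t+1$ and apply Theorem \ref{th1} on $[0,T]$ to obtain a random $\theta_0=\theta(T,d,\alpha)<1$ and a constant $C=C(d,\alpha)$ such that, $\mathbb{P}$-a.s., $H(r,s)\le C\sqrt{(s-r)\log(1/(s-r))}$ for all $r,s\in[0,T]$ with $0<s-r\le\theta_0$. Setting $\theta=\theta_0$ and taking $r=t$, $s=t+\Delta t$ for an arbitrary $\Delta t\in(0,\theta]$ (so that $s\le T$ and $s-r=\Delta t\le\theta_0$), this yields the dislocation bound $H(t,t+\Delta t)\le C\sqrt{\Delta t\log(1/\Delta t)}$, holding simultaneously over all such $\Delta t$.

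Next I would unwind the definition of $H$. By Lemma \ref{level} and the displayed formula for $H(r,s)$, for every level $j$ the ancestor at time $t$ of the particle $X_j(t+\Delta t)$ sits at some level $l$ with $|X_j(t+\Delta t)-X_l(t-)|\le H(t,t+\Delta t)$. The one point needing care is the left limit $X_l(t-)$: since $t$ is a fixed time, $\mathbb{P}$-a.s. none of the driving Poisson processes $\mathbf{N}_{ij}$, $\tilde{\mathbf{N}}$ jumps at $t$, and as the particles otherwise evolve as continuous Brownian motions, every $X_l$ is continuous at $t$, whence $X_l(t-)=X_l(t)$ for all $l$ at once. Combining this with Lemma \ref{support}, which gives $X_l(t)\in\text{supp}\,X(t)$, I conclude that each particle satisfies $X_j(t+\Delta t)\in\mathbb{B}(\text{supp}\,X(t),\,C\sqrt{\Delta t\log(1/\Delta t)})$, and since this neighborhood is closed it contains the closure $\overline{\{X_j(t+\Delta t):j\ge1\}}$.

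To finish I need the converse of Lemma \ref{support}, namely that $\text{supp}\,X(t+\Delta t)$ is exactly $\overline{\{X_j(t+\Delta t):j\ge1\}}$. This follows from the de Finetti representation used in Lemma \ref{support}: given $X(s)$ the levels $\{X_j(s)\}$ are i.i.d. samples from $X(s)$, and $X^{(n)}(s)\to X(s)$ weakly, so every open ball meeting $\text{supp}\,X(s)$ has positive mass and must therefore contain some $X_j(s)$; hence no point of the support is isolated from the particle set, giving $\text{supp}\,X(s)\subseteq\overline{\{X_j(s)\}}$. Granting this identity at $s=t+\Delta t$, we get $\text{supp}\,X(t+\Delta t)=\overline{\{X_j(t+\Delta t)\}}\subseteq\mathbb{B}(\text{supp}\,X(t),\,C\sqrt{\Delta t\log(1/\Delta t)})$, which is the assertion.

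The main obstacle I anticipate is making the support identity hold simultaneously for all $\Delta t\in(0,\theta]$ rather than at a single time. Theorem \ref{th1} already supplies the dislocation bound uniformly in $\Delta t$, and the left-limit issue is confined to the \emph{fixed} endpoint $t$, so the genuine work is to know $\text{supp}\,X(s)=\overline{\{X_j(s)\}}$ for all $s\in(t,t+\theta]$ at once. Because the empirical-measure convergence pins down $\text{supp}\,X(s)$ cleanly only at continuity times of $X$ (all but a countable set of $s$), I would either establish the identity on a countable dense set of such times and extend to the remaining ones using right continuity of the paths together with the modulus bound just proved, or invoke the corresponding simultaneous support description from \cite{LZ}. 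This transfer from particle locations to the full closed support, rather than the estimate itself, is where the care lies.
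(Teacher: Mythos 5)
Your proposal is correct and follows essentially the same route as the paper: apply Theorem \ref{th1} with $r=t$, identify the ancestors via Lemma \ref{level}, use the a.s.\ identity $X_i(t-)=X_i(t)$ at the fixed time $t$ together with Lemma \ref{support}, and then upgrade from the countable dense set of times to all $\Delta t\le\theta$ by right continuity of $X$ (the paper packages this last step as Lemma \ref{901t1}). The one local difference is that the paper never needs your ``converse of Lemma \ref{support}'': at each dyadic time $r$ it simply observes that every empirical measure $X^{(n)}(r)$ is supported in the closed set $\mathbb{B}\(\text{supp}\,X(t),Ch(r-t)\)$, so the weak limit $X(r)$ is as well, which sidesteps the per-time de Finetti sampling argument and the simultaneity worry you correctly flag and then resolve by exactly the paper's dense-times-plus-right-continuity mechanism.
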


\begin{theorem}\label{916co1}
Under Assumption I, supp\,$X(t)$ is compact
for all  $t>0$ $\mathbb{P}$-a.s.. Further,  if supp\,$X(0)$ is compact, then ${\mathcal{R}([0,t))}$ is compact for all $t>0$ $\mathbb{P}$-a.s..
\end{theorem}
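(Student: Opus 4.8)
The plan is to fix $T\in\NN$ and establish both assertions on the interval $[0,T]$; since $\mathbb{P}$-a.s.\ statements over the countably many $T$ may be intersected, and since $\mathcal{R}([0,t))$ is nondecreasing in $t$ while every $t>0$ lies in some $[0,T]$, this reduces the theorem to uniform control on $[0,T]$. I would work on the $\mathbb{P}$-a.s.\ event on which the modulus estimate of Theorem~\ref{th1} holds, on which the associated $\La$-coalescent comes down from infinity (so that $N^{r,s}=\#\Pi^{s}(s-r)<\infty$ for every $0<r<s$), and on which no fixed dyadic time is a jump time of the driving Poisson processes. The two engines are the finite-cover estimate furnished by the ancestry process together with Lemma~\ref{level}, and the simultaneity already built into Theorem~\ref{th1}.

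For the first assertion, the basic observation is that by the definition of $H(r,s)$ and Lemma~\ref{level}, every particle location at time $t$ lies within $H(r,t)$ of one of its finitely many ancestors at time $r$, so by Lemma~\ref{support}
\[
\text{supp}\,X(t)\subseteq\bigcup_{l=1}^{N^{r,t}}\mathbb{B}\big(X_l(r-),H(r,t)\big)\qquad(0<r<t),
\]
a finite union of balls of finite radius, hence compact. For a single $t$ this is immediate; the real point is to make the choice uniform in $t$. I would do this by taking $r$ in the dyadic rationals: for a dyadic $r$ the time $r$ is a.s.\ not a jump time, so $\text{supp}\,X(r-)=\text{supp}\,X(r)$, which is compact by the fixed-time compactness (the finite cover above with lookback $r/2$, as in \cite{LZ}). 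Then for an arbitrary $t\in(0,T]$ I choose a dyadic $r\in(\max\{0,t-\theta\},t)$, so that $0<t-r\le\theta$ and the modulus applies, giving
\[
\text{supp}\,X(t)\subseteq\mathbb{B}\Big(\text{supp}\,X(r),\,C\sqrt{(t-r)\log(1/(t-r))}\Big).
\]
Since $\text{supp}\,X(r)$ is compact and the radius is finite (indeed bounded by $C\sqrt{\theta\log(1/\theta)}$, using that $x\mapsto\sqrt{x\log(1/x)}$ is increasing near $0$), $\text{supp}\,X(t)$ is bounded; being closed, it is compact. Because the modulus event is a single a.s.\ event covering all pairs $(r,t)$ and fixed-time compactness is invoked only at the countably many dyadic $r$, this holds simultaneously for all $t\in(0,T]$.

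For the second assertion I would use a chaining argument along the ancestral lineages. Partition $[0,T]$ by $0=r_0<r_1<\cdots<r_M=T$ with each $r_{k+1}-r_k\le\theta$ (finitely many pieces). Tracing any lineage back through these times and applying $H(r_k,r_{k+1})$ on each step, every location present at any time $s\in[0,t)$ lies within $\sum_{k}H(r_k,r_{k+1})$ of its time-$0$ ancestor (a partial final step being absorbed by the same per-interval modulus bound), and that ancestor belongs to the compact set $\text{supp}\,X(0)$. Hence
\[
\mathcal{R}([0,t))\subseteq\mathbb{B}\Big(\text{supp}\,X(0),\,\textstyle\sum_{k=0}^{M-1}C\sqrt{(r_{k+1}-r_k)\log(1/(r_{k+1}-r_k))}\Big),
\]
and the right-hand sum is finite. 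Thus $\mathcal{R}([0,t))$ is bounded; it is closed by definition, hence compact. Monotonicity of $\mathcal{R}([0,t))$ in $t$ upgrades this from $t=T\in\NN$ to all $t>0$ simultaneously.

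The main obstacle, and the reason Theorem~\ref{th1} is phrased as a simultaneous estimate, is precisely the passage from compactness at each individual time to compactness at all times at once: for a single $t$ the finite-cover bound already yields a compact support, but the cover (its radius, its finitely many centers, and $N^{r,t}$) depends on $t$, and there are uncountably many $t$. The device that removes this difficulty is to anchor the estimate at a countable dense set of reference times — the dyadic lookback times in the first part and the fixed partition points in the second — and to let the single a.s.\ modulus event carry the uniformity across the remaining uncountably many times. The residual points are routine: the identification $\text{supp}\,X(r-)=\text{supp}\,X(r)$ at dyadic $r$ (a fixed time is a.s.\ not a jump time), the simultaneous finiteness of the ancestor counts $N^{r,s}$ under coming down from infinity, and the convergence of the telescoped modulus sum.
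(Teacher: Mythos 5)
Your architecture is essentially the paper's --- the single a.s.\ event of Theorem \ref{th1}, finiteness of the ancestor counts $N^{r,s}$ from coming down from infinity, and anchoring at dyadic times --- and in two places it is even lighter: you conclude compactness of $\mathcal{R}([0,t))$ from boundedness plus closedness, whereas the paper (via Lemma \ref{N_{n,k}}) builds an explicit cover by at most $\lfloor 2^nT\rfloor\times\lfloor 4^{n/\al}n^{2/\al}\rfloor$ balls of radius $2Ch(2^{-n})$, a stronger output that it then recycles for the Hausdorff dimension bounds in Theorems \ref{811t1}--\ref{range}. That simplification is legitimate for this theorem. However, there is a genuine gap at the very first step of your argument, and it recurs in the chaining for the second assertion. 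The inclusion $\text{supp}\,X(t)\subseteq\bigcup_{l=1}^{N^{r,t}}\mathbb{B}\(X_l(r-),H(r,t)\)$ does \emph{not} follow from Lemma \ref{support}: that lemma gives the opposite containment $\{X_1(t),X_2(t),\ldots\}\subseteq\text{supp}\,X(t)$. What you need is $\text{supp}\,X(t)\subseteq\overline{\{X_i(t):i\in[\infty]\}}$, i.e.\ that any closed set carrying all particle locations carries the de Finetti limit $X(t)$. At a single fixed $t$ this holds a.s.\ because $X^{(n)}(t)\rightarrow X(t)$ weakly; but your proof invokes it at uncountably many evaluation times (every $t\in(0,T]$ in part one, every $s\in[0,t)$ in part two), and these fixed-time null events cannot be intersected. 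Your dyadic anchoring repairs only the lookback side (ensuring $X_l(r-)=X_l(r)\in\text{supp}\,X(r)$ at the countably many dyadic $r$); the evaluation side is left unguarded, and your ``main obstacle'' paragraph does not notice that the identification of $\text{supp}\,X(t)$ with the closure of the particle configuration is itself a fixed-time a.s.\ statement.

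The paper closes exactly this hole, and it is why Lemma \ref{901t1} exists: one first establishes the ball inclusion at dyadic evaluation times $t\in S^T$ (where the empirical convergence may be invoked, countably often), and then, for arbitrary $t$, chooses dyadic $t_l\downarrow t$ and uses right-continuity of the path $X$ together with Lemma \ref{901t1}, namely $\text{supp}\,X(t)\subseteq\bigcap_{m\geq1}\overline{\bigcup_{l\geq m}\text{supp}\,X(t_l)}$, to transfer the inclusion to $t$; the transferred bound survives because the covering neighborhoods are closed and decrease to the one at $t$ by continuity of $h$. Your proof needs this insertion at both places where you pass from particle locations to $\text{supp}\,X(t)$ (and at the time $s$ in your lineage-chaining bound for $\mathcal{R}([0,t))$). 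With that step added, the remainder of your outline --- the triangle inequality of Lemma \ref{ine} along a mesh-$\theta$ partition, boundedness plus closedness, and monotonicity in $T$ --- goes through and recovers the theorem.
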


\noindent {\bf Condition A}: There exists a constant $\al>0$ such that the associated $\La$-coalescent $\Pi$ satisfies
$$\limsup_{m\rightarrow\infty}m^{\al}\sum_{b=m+1}^{\infty}{\la_{b}}^{-1}<\infty.$$

\begin{remark}
The Kingman's coalescent satisfies Condition A with $\al=1$. In case of $\be\in(1,2)$, the Beta$(2-\be, \be)$-coalescent satisfies Condition A with $\al=\be-1$.
\end{remark}

\begin{theorem}\label{811t1}
Suppose that Condition A holds. Then
$$\text{dim}\,\text{supp}\,X(t)\leq 2/\al$$   for all $t>0$ $\mathbb{P}$-a.s..
\end{theorem}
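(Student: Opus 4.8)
The plan is to reduce the support to the countable family of lookdown particles and then to cover that cloud efficiently by balls centred at their ancestors. By Lemma \ref{support} together with de Finetti's theorem (the $X_i(t)$ are, conditionally on $X(t)$, i.i.d.\ samples from $X(t)$ and hence a.s.\ dense in its support), $\mathbb{P}$-a.s.\ $\text{supp}\,X(t)=\overline{\{X_i(t):i\ge 1\}}$, so it suffices to cover the particle cloud. Fixing $T>0$ and a small lag $t>0$, Lemma \ref{level} shows that every particle $X_j(T)$ lies within $H(T-t,T)$ of the position of its time-$(T-t)$ ancestor, the ancestor ranging over the $N^{T-t,T}=\#\Pi^T(t)$ blocks. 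Thus $\{X_i(T)\}$ is covered by $N^{T-t,T}$ balls of radius $H(T-t,T)$, and for any $\be>0$,
\[
\mathcal{H}^{\be}_{2H(T-t,T)}\big(\text{supp}\,X(T)\big)\le N^{T-t,T}\,\big(2H(T-t,T)\big)^{\be}.
\]

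First I would record that Condition A implies Assumption I, so that Theorems \ref{th1} and \ref{modulus2} apply. Viewing the block-counting process of $\Pi$ as a pure-death chain that holds at $b$ blocks for an $\mathrm{Exp}(\la_b)$ time, the strong Markov property gives $\EE T_m=\sum_{b>m}\mathbb{P}(b\text{ is visited})\,\la_b^{-1}\le\sum_{b=m+1}^{\infty}\la_b^{-1}$, whence $\limsup_m m^{\al}\EE T_m<\infty$. In particular the uniform modulus $H(r,s)\le C\sqrt{(s-r)\log(1/(s-r))}$ of Theorem \ref{th1} is available. Next I would bound the ancestor count: since each $\Pi^{s}$ is a $\La$-coalescent, $\mathbb{P}(\#\Pi^{s}(t)>m)=\mathbb{P}(T_m>t)\le\EE T_m/t\le Cm^{-\al}/t$. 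Along $t_n=2^{-n}$ with $m_n\asymp t_n^{-1/\al}(\log(1/t_n))^{2/\al}$ the right-hand side is $\asymp n^{-2}$, hence summable, so Borel--Cantelli and the monotonicity of $t\mapsto\#\Pi^T(t)$ give, $\mathbb{P}$-a.s.\ for all small $t$, $N^{T-t,T}\le C t^{-1/\al}(\log(1/t))^{2/\al}$. Feeding this and the modulus bound into the covering inequality makes the right-hand side $\asymp t^{\,\be/2-1/\al}(\log(1/t))^{\be/2}\to 0$ whenever $\be>2/\al$; thus $\mathcal{H}^{\be}(\text{supp}\,X(T))=0$ and $\dim\text{supp}\,X(T)\le 2/\al$ for each fixed $T$.

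The main obstacle is upgrading this from a fixed time to all $t>0$ simultaneously; the neighbourhood estimate of Theorem \ref{modulus2} is useless here, since fattening a nonempty set raises its dimension to $d$. Instead I would exploit a monotonicity special to the fixed-population lookdown model: for fixed $r$, a time-$r$ family that loses all its representatives is extinct forever, so $s\mapsto N^{r,s}$ is nonincreasing, while for fixed $s$ looking further back only merges families, so $r\mapsto N^{r,s}$ is nondecreasing. Fixing a horizon $T_0>0$ and a grid $s_k=k\delta$ with $\delta\le t$, these two monotonicities yield $\sup_{s\in[s_k,s_{k+1}]}N^{s-t,s}\le N^{s_{k+1}-t,\,s_k}$, reducing the uniform control of the ancestor count to finitely many grid values. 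Each of these is handled exactly as above, now with a union bound over the $O(T_0/\delta)$ cells (the extra polynomial factor is again absorbed by choosing $m_n$ with a larger logarithmic correction, keeping the tail summable). Combining this uniform ancestor count with the uniform modulus of Theorem \ref{th1} runs the covering argument simultaneously for every $s\in(0,T_0]$; letting $T_0\uparrow\infty$ then gives $\dim\text{supp}\,X(t)\le 2/\al$ for all $t>0$, $\mathbb{P}$-a.s.
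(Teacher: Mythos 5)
Your overall architecture is the same as the paper's (reduce to the particle cloud, cover it by balls centred at the time-$(s-t)$ ancestors with radius controlled by the uniform modulus of Theorem \ref{th1}, control the number of ancestors by Borel--Cantelli along $t_n=2^{-n}$, and use the two genealogical monotonicities of $N^{r,s}$ to pass from a grid to all times --- the paper uses exactly the bound $N^{(k-1)2^{-n},t}\leq N_{n,k}$ for $t\in[k2^{-n},(k+1)2^{-n}\wedge T)$). But there is a genuine quantitative gap at the crucial uniform ancestor-count estimate. Your tail bound is Markov's inequality, $\mathbb{P}(T_m>c2^{-n})\leq \EE T_m/(c2^{-n})\leq C m^{-\al}2^{n}$, and for a \emph{single} time this indeed allows $m_n\asymp 2^{n/\al}n^{2/\al}$. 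For the simultaneous statement, however, you must union-bound over the $\asymp 2^{n}$ grid cells of width $\delta\asymp 2^{-n}$ (the spacing is forced to be of order $t_n$ by your monotonicity sandwich, which needs $\delta\le t$ and leaves lag $t-\delta$). The total cost is then $\asymp 2^{n}\cdot m^{-\al}2^{n}=4^{n}m^{-\al}$, so summability forces $m_n\asymp 4^{n/\al}n^{2/\al}$, and the covering computation $m_n\, h(2^{-n})^{\be}\to 0$ then only holds for $\be>4/\al$: as written, your argument proves $\dim\,\text{supp}\,X(t)\le 4/\al$ for all $t$, not $2/\al$. Your parenthetical claim that ``the extra polynomial factor is again absorbed by choosing $m_n$ with a larger logarithmic correction'' is false: the deficit is a factor $2^{n}$, exponential on the Borel--Cantelli scale, and no logarithmic boost of $m_n$ can absorb it. The paper is explicit about this: its Lemma \ref{N_{n,k}} is exactly your Markov-type bound (giving $4^{n/\al}n^{2/\al}$ under Assumption I), and it remarks that this ``is not enough'' for Theorem \ref{811t1}. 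The missing idea is the sharper tail of Lemma \ref{N_{n,k,2}}: by the coupling of Lemma \ref{7151}, $T_m$ is stochastically dominated by $\sum_{i>m}\hat T_i$ with \emph{independent} $\hat T_i\sim\mathrm{Exp}(\la_i)$, and a Chernoff/moment-generating-function estimate --- where Condition A enters with its full strength through $\sum_{i>m}\la_i^{-1}\leq Cm^{-\al}$, not merely through $\EE T_m$ --- yields $\mathbb{P}\big(T_{2^{n/\al}n^{2/\al}}\geq 2^{-n}\big)\leq e^{-n}Q$ with $\ln Q=O(n^{-1})$. This exponentially small per-cell tail survives the union bound over $2^nT$ cells and delivers $\max_{1\le k\le 2^nT}N_{n,k}<2^{n/\al}n^{2/\al}$, which is what makes the exponent $2/\al$ attainable.

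A secondary, fixable issue: your reduction $\text{supp}\,X(t)=\overline{\{X_i(t):i\ge1\}}$ via de Finetti is an almost-sure statement \emph{at each fixed} $t$, and you then invoke it simultaneously for uncountably many $s\in(0,T_0]$, which a single-time null-set argument does not give. The paper avoids this by establishing the ball cover at the countably many dyadic times and extending to arbitrary $t$ through right-continuity of $X$ in the Skorohod topology together with Lemma \ref{901t1} (if $\nu_n\Rightarrow\nu$ then $\text{supp}\,\nu\subseteq\cap_m\overline{\cup_{n\ge m}\text{supp}\,\nu_n}$), using that the covering sets are closed. You should route your final step the same way; unlike the tail-estimate gap, this one requires no new quantitative input.
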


\begin{theorem}\label{range}
Suppose that Condition A holds. Then for any $0<\delta<T$, $$\text{dim}\,{\mathcal{R}([\delta,T))}\leq 2+2/\al\ \ \ \mathbb{P}\text{-a.s..}$$
\end{theorem}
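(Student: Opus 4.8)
The plan is to prove the stronger statement that $\mathcal{H}^{\be}\(\mathcal{R}([\delta,T))\)=0$ almost surely for every $\be>2+2/\al$, which gives $\dim\,\mathcal{R}([\delta,T))\le 2+2/\al$. The backbone is the ancestral covering of the support that already underlies Theorems \ref{916co1} and \ref{811t1}: combining Lemma \ref{level} with the definition of $H(r,s)$ one has, $\mathbb{P}$-a.s. and simultaneously for all $0\le r<s\le T$,
\[
\text{supp}\,X(s)\subseteq\bigcup_{l=1}^{N^{r,s}}\mathbb{B}\(X_l(r-),\,H(r,s)\),
\]
while Theorem \ref{th1} supplies the radius bound $H(r,s)\le C\sqrt{(s-r)\log(1/(s-r))}$ for all such $r,s$ with $0<s-r\le\theta$. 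Thus a cover of the range is obtained once one controls, uniformly over a fine time-grid, both the radii $H$ and the number of ancestral balls $N^{r,s}=\#\Pi^{s}(s-r)$. The extra ``$2$'' compared with Theorem \ref{811t1} is precisely the contribution of the one additional (time) direction swept out by the range.

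First I would discretize time. Fix a small $\ep>0$, set $t_k=\de+k\ep$ for $k=0,\ldots,K:=\lceil(T-\de)/\ep\rceil$, and put $r_k=t_k-\ep$. For $t\in[t_k,t_{k+1}]$, by transitivity of the lookdown ancestry the time-$r_k$ ancestor of any particle alive at time $t$ is the time-$r_k$ ancestor of its time-$t_k$ ancestor; hence $N^{r_k,t}\le N^{r_k,t_k}=\#\Pi^{t_k}(\ep)=:N_k$, and these ancestors sit at levels $1,\ldots,N_k$. Since $t-r_k\le 2\ep$, the displayed covering together with Theorem \ref{th1} gives, on the event $A_\ep:=\{2\ep\le\theta\}$ and for all $t\in[t_k,t_{k+1}]$,
\[
\text{supp}\,X(t)\subseteq\bigcup_{l=1}^{N_k}\mathbb{B}\(X_l(r_k-),\,2C\sqrt{\ep\log(1/\ep)}\).
\]
Taking the closure of the resulting finite union of closed balls over $k=0,\ldots,K-1$ shows that, on $A_\ep$, the set $\mathcal{R}([\delta,T))$ is covered by $\sum_{k}N_k$ balls of radius $\rho_\ep:=2C\sqrt{\ep\log(1/\ep)}$.

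The essential probabilistic input is then a control of $\sum_k N_k$. Here I would use a first-moment estimate rather than a uniform pointwise bound. Along $\ep_n=2^{-n}$, set $Y_n:=\sum_{k=0}^{K_n-1}N_k\,(2\rho_{\ep_n})^{\be}$, the covering sum at scale $\ep_n$. Since $N_k=\#\Pi^{t_k}(\ep_n)$ has the law of the block count of the $\La$-coalescent at time $\ep_n$, the block-counting estimates underlying Theorem \ref{811t1} (the coming-down-from-infinity speed, which under Condition A yields $\EE\,\#\Pi(\ep)\le C\ep^{-1/\al}$) give
\[
\EE Y_n\le (K_n+1)\,\EE\,\#\Pi(\ep_n)\,(2\rho_{\ep_n})^{\be}\le C\,\ep_n^{-1}\,\ep_n^{-1/\al}\,\(\ep_n\log(1/\ep_n)\)^{\be/2}=C\,\ep_n^{\,\be/2-1-1/\al}\(\log(1/\ep_n)\)^{\be/2}.
\]
For $\be>2+2/\al$ the exponent $\be/2-1-1/\al$ is positive, so $\EE Y_n\to0$; hence $Y_n\to0$ in $L^1$ and therefore $Y_{n_j}\to0$ $\mathbb{P}$-a.s. along some subsequence. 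Because $\ep_n\downarrow0$, the events $A_{\ep_n}=\{2\ep_n\le\theta\}$ are increasing to $\{\theta>0\}$, which has full probability; thus $\mathbb{P}$-a.s. $A_{\ep_{n_j}}$ holds for all large $j$, and on it $\mathcal{H}^{\be}_{2\rho_{\ep_{n_j}}}\(\mathcal{R}([\delta,T))\)\le Y_{n_j}\to0$. Since $2\rho_{\ep_{n_j}}\to0$ this forces $\mathcal{H}^{\be}\(\mathcal{R}([\delta,T))\)=0$ a.s. for every $\be>2+2/\al$, and the theorem follows.

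I expect the genuinely delicate points to be, first, invoking the \emph{simultaneous-in-time} ancestral covering (the fixed-time identity $\text{supp}\,X(s)=\overline{\{X_j(s)\}}$ must be upgraded to hold for all $s$, exactly the machinery behind Theorem \ref{916co1}), and second, reconciling the \emph{random} modulus threshold $\theta$ with the deterministic scales $\ep_n$, which I handle via the monotone events $A_{\ep_n}$ and an $L^1$-plus-subsequence argument. By contrast, the radii are handed to us for free by the one-sided modulus of Theorem \ref{th1}; the only real work beyond bookkeeping is the first-moment block-count bound $\EE\,\#\Pi(\ep)\le C\ep^{-1/\al}$, which is where Condition A enters and which is the natural quantitative strengthening of the coming-down-from-infinity estimate already used for the fixed-time dimension bound.
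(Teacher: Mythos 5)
Your covering skeleton is exactly the paper's: discretize $[\delta,T)$ into intervals of length $\ep_n=2^{-n}$, note $N^{r_k,t}\leq N^{r_k,t_k}$ by transitivity of the ancestral lines (Lemma \ref{level}), bound the radii by Theorem \ref{th1}, and upgrade the fixed-time inclusion $\text{supp}\,X(t)\subseteq\overline{\{X_j(t)\}}$ to all $t$ via dyadic times, right continuity and Lemma \ref{901t1} --- all of which you correctly identified as the machinery of Theorem \ref{916co1} (the paper's proof of Theorem \ref{range} simply reuses display (\ref{121018e4}) with the sharper count). Where you genuinely diverge is the stochastic input controlling the number of balls: the paper proves an almost-sure \emph{uniform} bound $\max_{1\leq k\leq 2^nT}N_{n,k}<2^{n/\al}n^{2/\al}$ for large $n$ (Lemma \ref{N_{n,k,2}}, via the urn coupling of Lemma \ref{7151}, an exponential Chebyshev bound and Borel--Cantelli), which makes the covering count deterministic and kills $\mathcal{H}^{2/\al+2+\epsilon}$ outright; you instead bound the \emph{expected} covering sum $\EE Y_n$ and extract an a.s. subsequence from $L^1$ convergence. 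Your route is sound (monotonicity of $\mathcal{H}^{\be}_{\eta}$ in $\eta$ lets a subsequence suffice, and intersecting over rational $\be>2+2/\al$ handles all $\be$ at once), and it trades the Borel--Cantelli bookkeeping for a softer limit argument; the paper's version buys a cover that is valid simultaneously for every large $n$, which it then recycles verbatim in Theorem \ref{811t1} and Corollary \ref{121013}.

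The one point you cannot take for granted is the asserted first-moment bound $\EE\,\#\Pi(\ep)\leq C\ep^{-1/\al}$, which appears nowhere in the paper and does \emph{not} follow from $\EE T_m\leq Cm^{-\al}$ (Remark \ref{824re}) by Markov's inequality: writing $\EE\,\#\Pi(\ep)=\sum_{m\geq0}\PP(T_m>\ep)$ and using $\PP(T_m>\ep)\leq Cm^{-\al}/\ep$ gives a divergent tail sum whenever $\al\leq1$, which is precisely the regime of the Beta$(2-\be,\be)$-coalescents ($\al=\be-1$). To repair this you need the same exponential-moment machinery the paper deploys in Lemma \ref{N_{n,k,2}}: by Lemma \ref{7151}, $T_m$ is stochastically dominated by $\sum_{i>m}\hat{T}_i$ with $\hat{T}_i\sim\text{Exp}(\la_i)$; Condition A together with the monotonicity of $(\la_b)$ forces $\la_m\geq cm^{1+\al}$, and optimizing $\PP(T_m>\ep)\leq e^{-u\ep}\prod_{i>m}\la_i(\la_i-u)^{-1}$ with $u\asymp m^{1+\al}$ yields $\PP(T_m>\ep)\leq e^{-cm}$ for $m\geq (K/\ep)^{1/\al}$ with $K$ large, whence $\EE\,\#\Pi(\ep)\leq C\ep^{-1/\al}$. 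So your assessment that this moment bound is ``the only real work'' is accurate, but that work is essentially equivalent in substance to the paper's Lemma \ref{N_{n,k,2}}; as written, attributing it to ``estimates underlying Theorem \ref{811t1}'' leaves the crux unproved, though entirely fixable with the paper's own tools.
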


\subsection{A sufficient condition}

Recall the Markov chain introduced in {\cite{LZ}}. For any $n$, $\(\Pi_n(t)\)_{t\geq 0}$ is the $\La$-coalescent $\Pi$ restricted to $\left[n\right]$ with
$\Pi_n(0)=\mathbf{0}_{[n]}$. For any $n>m$, the block counting process
$\(\#\Pi_n(t)\vee m\)_{t\geq0}$ is a Markov chain
with initial value $n$ and absorbing state $m$. For any $n\geq b>m$,
let $\(\mu_{b,k}\)_{m\leq k\leq b-1}$  be its transition rates such
that
\begin{eqnarray}\label{M_rate}
\begin{cases}
&\mu_{b,b-1}={b\choose 2}\la_{b,2},\\
&\mu_{b,b-2}={b\choose 3}\la_{b,3},\\
&\cdots\cdots\\
&\mu_{b,m+1}={b\choose b-m}\la_{b,b-m},\\
&\mu_{b,m}=\sum_{k=b-m+1}^b{b\choose k}\la_{b,k}.
\end{cases}
\end{eqnarray}
The total transition rate is
$$\mu_b=\sum_{k=m}^{b-1}\mu_{b,k}=\sum_{k=2}^b{b\choose
k}\la_{b,k}=\la_b.$$

For $b>m$, let $\gamma_{b,m}$ be the total rate at which the block
counting Markov chain  starting at $b$ is decreasing, i.e.,
\begin{eqnarray}\label{1213}
\,\,\,\,\gamma_{b,m}=\begin{cases}\sum_{k=2}^{b-m}\(k-1\){b\choose
k}\la_{b,k}+\sum_{k=b-m+1}^{b}\(b-m\){b\choose k}\la_{b,k},
&\text{~~if~~} b\geq m+2,\\
\sum_{k=2}^{b}{b\choose k}\la_{b,k}, &\text{~~if~~} b=m+1.
\end{cases}
\end{eqnarray}

\noindent {\bf Condition B}: There exists a constant $\al>0$ such that
$$\limsup_{m\rightarrow\infty}m^{\al}\sum_{b=m+1}^{\infty}{\gamma_{b,m}}^{-1}<\infty.$$

\begin{remark}\label{824re}
It follows from the proof of Lemma 4.4 in \cite{LZ} that $$\EE T_m\leq\sum_{b=m+1}^{\infty}{\gamma_{b,m}}^{-1}.$$
Recalling the definitions of $\ga_{b,m}$ by (\ref{1213}) and $\la_b$ by (\ref{la_n}), we have $\la_b\leq\ga_{b,m}$ for any $b>m$. Then
for any $\al>0$, we have
$$m^{\al}\EE T_m\leq m^{\al}\sum_{b=m+1}^{\infty}{\ga_{b,m}}^{-1}\leq m^{\al}\sum_{b=m+1}^{\infty}{\la_{b}}^{-1}.$$
Therefore, Condition A implies Condition B which is sufficient for Assumption I.

Condition A is not a strong requirement since for the Beta
coalescents Condition A is sufficient and necessary  for coming down
from infinity.

The speed of coming down from infinity for $\Lambda$-coalescent is
discussed in Berestycki et al \cite{BeBeLi10}. It is shown that
there exists a deterministic function $\nu: (0, \infty)\goto
(0,\infty)$ such that $\#\Pi(t)/\nu(t)\goto 1$ as $t \goto 0$ both
almost surely and in $L^p$ for $p\geq 1$. For our purpose, it is
possible to replace Assumption I with an assumption on the behavior
of $\nu(t)$ for $t$ close to $0$.
\end{remark}

\subsection{Some Corollaries and Propositions}
For $t>0$, let $$r(t)\equiv\inf\left\{R\geq 0:\text{supp}\,X\(t\)\subseteq\mathbb{B}\(0,R\)\right\}.$$

The next result is similar to Theorem 2.1 of Tribe \cite{Tri89} on
the support process of superBrownian motion; also see Theorem
9.3.2.3 of Dawson \cite{Dawson2}.  It follows immediately  from
Theorem \ref{modulus2}.
\begin{corollary}
Under Assumption I, there
exists a constant $C>0$ such that
$$\mathbb{P}_{\delta_0}\left(\limsup_{t\downarrow0}\frac{\sup_{0\leq u\leq t}r(u)}{\sqrt{t\log\(1/t\)}}\leq C\right)=1,$$
where $\mathbb{P}_{\delta_0}$ denotes the law of $X$ with
$X\(0\)=\delta_0$.
\end{corollary}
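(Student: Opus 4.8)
The plan is to read off the result directly from Theorem \ref{modulus2} specialized to the fixed time $t=0$, and then translate the support inclusion there into a statement about the radius function $r$. Under $\mathbb{P}_{\delta_0}$ we have $X(0)=\delta_0$, so $\text{supp}\,X(0)=\{0\}$ and hence $\mathbb{B}(\text{supp}\,X(0),\eta)=\mathbb{B}(0,\eta)$ for every $\eta>0$. Applying Theorem \ref{modulus2} with $t=0$ therefore produces a constant $C\equiv C(d,\al)$ and an a.s.\ positive random variable $\theta\equiv\theta(0,d,\al)<1$ such that, $\mathbb{P}_{\delta_0}$-a.s., for every $\Delta t\in(0,\theta]$,
$$\text{supp}\,X(\Delta t)\subseteq\mathbb{B}\(0, C\sqrt{\Delta t\log\(1/\Delta t\)}\).$$
By the very definition of $r$ as the radius of the smallest centered closed ball containing the support, this inclusion is equivalent to the pointwise estimate $r(\Delta t)\leq C\sqrt{\Delta t\log\(1/\Delta t\)}$ valid for all $0<\Delta t\leq\theta$.

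The next step is to pass from this pointwise bound to a bound on the running supremum $\sup_{0\leq u\leq t} r(u)$, which is what appears in the statement. The elementary ingredient is that the map $u\mapsto u\log(1/u)$ is increasing on $(0,e^{-1})$, since its derivative $-\log u-1$ is positive there; consequently $u\mapsto\sqrt{u\log(1/u)}$ is increasing on that interval as well. Hence, on the a.s.\ event above, for every $t\leq\min(\theta,e^{-1})$ and every $u\in(0,t]$ we may chain the two inequalities
$$r(u)\leq C\sqrt{u\log\(1/u\)}\leq C\sqrt{t\log\(1/t\)}.$$
Since $r(0)=0$ (the support of $\delta_0$ is the origin, so the smallest centered ball has radius $0$), we obtain $\sup_{0\leq u\leq t} r(u)\leq C\sqrt{t\log\(1/t\)}$ for all such $t$.

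Finally I would divide by $\sqrt{t\log(1/t)}$ and let $t\downarrow0$: because $\theta>0$ almost surely, every sufficiently small $t$ satisfies $t\leq\min(\theta,e^{-1})$, so the ratio is eventually bounded by $C$, giving
$$\limsup_{t\downarrow0}\frac{\sup_{0\leq u\leq t}r(u)}{\sqrt{t\log\(1/t\)}}\leq C\qquad\mathbb{P}_{\delta_0}\text{-a.s.}$$
There is no genuinely hard step here, consistent with the claim that the corollary follows immediately from Theorem \ref{modulus2}; the only point deserving a line of care is the monotonicity of $u\mapsto u\log(1/u)$ near the origin, which is precisely what permits the replacement of the running supremum over $[0,t]$ by the value controlled at the right endpoint $t$.
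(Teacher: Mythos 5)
Your proposal is correct and is exactly the route the paper intends: the paper gives no separate proof, stating only that the corollary ``follows immediately from Theorem \ref{modulus2},'' and your argument fills in precisely the routine details (specializing to $t=0$ with $\mathrm{supp}\,X(0)=\{0\}$, reading the inclusion as $r(\Delta t)\leq C\sqrt{\Delta t\log(1/\Delta t)}$, and using monotonicity of $u\mapsto u\log(1/u)$ on $(0,e^{-1})$ to control the running supremum). Your implicit use of the simultaneous-in-$\Delta t$ version of Theorem \ref{modulus2} is justified, since its proof delivers the inclusion almost surely for all $0<\Delta t\leq\theta$ at once.
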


\begin{corollary}\label{121013}
Suppose that Condition A holds. For any $T>0$, we have
$$\mathbb{P}_{\delta_0}\left(\text{dim}\,\mathcal{R}\(\left[0,T\right)\)\leq 2+2/\al\right)=1.$$
\end{corollary}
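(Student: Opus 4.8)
The plan is to realise the closed range $\mathcal{R}([0,T))$ as a \emph{countable} union of ranges that are bounded away from time $0$, apply Theorem~\ref{range} to each piece, and conclude by the countable stability of Hausdorff dimension. The only genuine difficulty is the behaviour near time $0$: the set $\bigcup_{t\in[0,T)}\text{supp}\,X(t)$ is not closed, and a priori the closure defining $\mathcal{R}([0,T))$ could introduce accumulation points of large dimension. I would control this using the near-$0$ modulus of continuity, which shows the origin is the only new point the closure can create.

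First I would record the near-$0$ estimate. Since Condition~A implies Assumption~I (Remark~\ref{824re}), Theorem~\ref{modulus2} applies at the fixed time $t=0$; as $X(0)=\delta_0$ we have $\text{supp}\,X(0)=\{0\}$, so there are an a.s.\ positive random $\theta$ and a constant $C$ with
\[
\text{supp}\,X(\Delta t)\subseteq\mathbb{B}\big(0,\,C\sqrt{\Delta t\log(1/\Delta t)}\big)\qquad\text{for all } 0<\Delta t\le\theta .
\]
In particular, with $r(s)$ as defined before Corollary~\ref{121013}, this gives $r(s)\to0$ as $s\downarrow0$, $\mathbb{P}_{\delta_0}$-a.s.

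Next I would prove that, $\mathbb{P}_{\delta_0}$-a.s.,
\[
\mathcal{R}([0,T))=\{0\}\cup\bigcup_{n>1/T}\mathcal{R}\big([1/n,T)\big).
\]
The inclusion $\supseteq$ is immediate. For $\subseteq$, take $x\in\mathcal{R}([0,T))$ with $x\ne0$; then $x=\lim_k y_k$ with $y_k\in\text{supp}\,X(t_k)$ for some $t_k\in[0,T)$. If some subsequence satisfied $t_{k_j}\to0$, then $y_{k_j}\in\mathbb{B}(0,r(t_{k_j}))$ with $r(t_{k_j})\to0$, forcing $x=0$, a contradiction; hence $\liminf_k t_k>0$, so $t_k\ge1/n$ for a fixed $n$ and all large $k$, whence $y_k\in\mathcal{R}([1/n,T))$ for large $k$, and this set being closed gives $x\in\mathcal{R}([1/n,T))$. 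This is the crux: it converts the problematic closure into a genuine countable union, and it is exactly here that the modulus of continuity (through $r(s)\to0$) is used.

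Finally, countable stability of Hausdorff dimension yields $\text{dim}\,\mathcal{R}([0,T))=\sup_{n>1/T}\text{dim}\,\mathcal{R}([1/n,T))$, the single point $\{0\}$ contributing dimension $0$. For each such $n$ we have $0<1/n<T$, so Theorem~\ref{range} gives $\text{dim}\,\mathcal{R}([1/n,T))\le 2+2/\al$ on a full-probability event; intersecting these countably many events gives $\text{dim}\,\mathcal{R}([0,T))\le 2+2/\al$, $\mathbb{P}_{\delta_0}$-a.s. The main obstacle throughout is the second step, i.e.\ ruling out that the closure adds accumulation points other than the origin; once the near-$0$ support collapse $r(s)\to0$ is in hand, the remainder is a routine application of Theorem~\ref{range} and the countable stability of dimension.
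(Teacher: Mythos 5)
Your proof is correct, but it follows a genuinely different route from the paper's. The paper never invokes Theorem \ref{range} as a black box: it reuses the explicit cover constructed in that theorem's proof. Writing $\mathcal{R}([0,T))=\mathcal{R}([0,2^{-n}))\cup\mathcal{R}([2^{-n},T))$ --- a \emph{finite} union, over which closure commutes, so the accumulation-point issue you worry about never arises there --- it covers the first piece by the single ball $\mathbb{B}(0,Ch(2^{-n}))$ (Theorem \ref{modulus2} applied at $t=0$ with $X(0)=\delta_0$) and the second by at most $\lfloor 2^nT\rfloor\times\lfloor 2^{n/\al}n^{2/\al}\rfloor$ balls of radius $2Ch(2^{-n})$ coming from Lemma \ref{N_{n,k,2}}; letting $n\to\infty$ then shows directly that $\mathcal{H}^{2/\al+2+\epsilon}(\mathcal{R}([0,T)))=0$ for every $\epsilon>0$. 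Your argument instead uses only the \emph{statement} of Theorem \ref{range}, isolates the one delicate point --- limit points created by the closure as $t\downarrow 0$ --- and disposes of it via the support collapse $r(s)\to 0$, finishing with countable stability of Hausdorff dimension; this is more modular and makes transparent why only the origin can be added at time $0$. Two small remarks: first, your use of Theorem \ref{modulus2} (like the paper's own) requires its conclusion to hold a.s.\ simultaneously for all $\Delta t\in(0,\theta]$, which is indeed what its proof establishes via Theorem \ref{th1}; second, your sequence-extraction step can be streamlined, since $\mathcal{R}([0,1/n))\subseteq\mathbb{B}(0,Ch(1/n))$ shows that any $x\neq 0$ with $Ch(1/n)<|x|$ must already lie in the closed set $\mathcal{R}([1/n,T))$, so the dichotomy over subsequential limits of the $t_k$ is unnecessary. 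What each approach buys: yours avoids re-entering earlier proofs and cleanly separates the topological issue from the covering estimates; the paper's single-scale covering is shorter given the machinery already in place, sidesteps the countable-union decomposition entirely, and exhibits the vanishing Hausdorff premeasure explicitly.
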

We defer the proof of Corollary \ref{121013} to Section \ref{s5}.

The next result follows from the proof of Theorem \ref{811t1} and a
standard result of Hausdorff measure; see Lemma 6.3 of Falconer
\cite{Fal}.

\begin{proposition}
Suppose that Condition A holds. Then $\mathbb{P}$-a.s. for all $t>0$
and $\epsilon>0$ we have
\[\limsup_{r\goto 0+}\frac{X(t)(\mathbb{B}(x,r))}{r^{2/\alpha+\epsilon}}>0\]
for $X(t)$ almost all $x$.
\end{proposition}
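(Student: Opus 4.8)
The plan is to deduce the statement from the dimension bound $\dim\,\text{supp}\,X(t)\le 2/\al$ of Theorem \ref{811t1}, combined with the mass-distribution principle in the form of Lemma 6.3 of Falconer \cite{Fal}. The heart of the matter is a purely deterministic fact: if $\mu$ is a finite Borel measure on $\RR^d$ whose closed support satisfies $\dim\,\text{supp}\,\mu\le s$, then for every $\epsilon>0$ one has $\limsup_{r\to0+}\mu(\mathbb{B}(x,r))/r^{s+\epsilon}>0$ for $\mu$-almost every $x$. I would first isolate and prove this deterministic statement for an arbitrary such $\mu$, and only then specialize to $\mu=X(t)$ with $s=2/\al$.

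For the deterministic step, fix $\epsilon>0$ and set
$$N_\epsilon\equiv\Big\{x\in\RR^d:\ \limsup_{r\to0+}\frac{\mu(\mathbb{B}(x,r))}{r^{s+\epsilon}}=0\Big\}.$$
Since $x\mapsto\mu(\mathbb{B}(x,r))$ is upper semicontinuous and the $\limsup$ may be taken along rational radii, $N_\epsilon$ is Borel. Suppose toward a contradiction that $\mu(N_\epsilon)>0$, and put $F\equiv N_\epsilon\cap\text{supp}\,\mu$, so that $\mu(F)=\mu(N_\epsilon)>0$ and $F\subseteq\text{supp}\,\mu$. For every $x\in F$ and every $c>0$ we have $\limsup_{r\to0+}\mu(\mathbb{B}(x,r))/r^{s+\epsilon}=0<c$, so the low-density direction of Lemma 6.3 of \cite{Fal} yields $\mathcal{H}^{s+\epsilon}(F)\ge\mu(F)/c$. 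Letting $c\downarrow0$ forces $\mathcal{H}^{s+\epsilon}(F)=\infty$, hence $\dim F\ge s+\epsilon$; but $F\subseteq\text{supp}\,\mu$ then gives $\dim\,\text{supp}\,\mu\ge s+\epsilon>s$, contradicting the hypothesis. Therefore $\mu(N_\epsilon)=0$, which is exactly the asserted lower density bound for this $\epsilon$.

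Since this argument was carried out for an arbitrary fixed $\epsilon>0$, the deterministic fact holds for every $\epsilon>0$ with no further work. To assemble the full statement I would restrict attention to the single $\mathbb{P}$-full event supplied by Theorem \ref{811t1} on which $\dim\,\text{supp}\,X(t)\le 2/\al$ simultaneously for all $t>0$. On that event, applying the deterministic fact with $\mu=X(t)$ and $s=2/\al$ gives, for every $t>0$ and every $\epsilon>0$, that $X(t)(N_\epsilon(t))=0$; that is, the density lower bound holds for $X(t)$-almost every $x$. The uncountable range of $t$ causes no difficulty here precisely because the density--dimension implication is deterministic, while the dimension bound is delivered for all $t$ at once by Theorem \ref{811t1}.

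The main (indeed essentially only) obstacle is bookkeeping rather than analysis. One must invoke the correct direction of Lemma 6.3 (the one passing from an upper density that is $<c$ to a Hausdorff measure that is $\ge\mu(F)/c$, applied with arbitrarily small $c$), one must intersect the exceptional set with $\text{supp}\,X(t)$ before appealing to the dimension bound, and one must discharge the quantifiers over $t$, over $\epsilon$, and over $X(t)$-almost every $x$ in the order above so that a single $\mathbb{P}$-null set suffices. No probabilistic input beyond Theorem \ref{811t1} is needed.
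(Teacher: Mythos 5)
Your proposal is correct and follows essentially the same route as the paper, which disposes of this proposition in one line by citing the proof of Theorem \ref{811t1} together with Falconer's Lemma 6.3: the only cosmetic difference is that you invoke the stated dimension bound $\text{dim}\,\text{supp}\,X(t)\leq 2/\al$ rather than the covering estimate $\mathcal{H}^{2/\al+\epsilon}(\text{supp}\,X(t))=0$ established inside that proof, and these are interchangeable since the dimension bound already forces the Hausdorff measure to vanish for every $\epsilon>0$. Your fleshed-out bookkeeping --- intersecting the exceptional set with $\text{supp}\,X(t)$, letting $c\downarrow 0$ in the low-density direction of Lemma 6.3, and working on the single $\mathbb{P}$-full event from Theorem \ref{811t1} so the quantifiers over $t$ and $\epsilon$ are discharged deterministically --- is exactly what the paper leaves implicit.
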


For any $0<t<1$, let
\begin{equation}\label{7112}
h(t)\equiv\sqrt{ t\log\({1}/{t}\)}.
\end{equation}

\begin{proposition}\label{4.10}
Let $X$ be any $\La$-Fleming-Viot process with $\La(\{0\})>0$ and underlying Brownian motion in $\RR^d$ for $d\geq2$.
Then given any fixed $t\geq0$, with probability one the process supp\,$X(t)$ has the one-sided
modulus of continuity with respect to $Ch$, where $C\equiv C(d)$ is the constant determined in Theorem \ref{modulus2}.
Further, with probability one supp\,$X(t)$ is compact for all $t>0$ and if supp\,$X(0)$ is compact, then $\mathcal{R}\([0,t)\)$ is also compact for all $t>0$. In addition, with probability one
$$\text{dim}\,\text{supp}\,X(t)\leq 2$$
for all $t>0$. Finally,  given any $0<\delta<T$, with probability one
$$\text{dim}\,\mathcal{R}\([\delta,T)\)\leq 4.$$
\end{proposition}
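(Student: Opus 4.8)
The plan is to deduce the entire proposition from the four main theorems of Section \ref{s4} by showing that the single hypothesis $\La(\{0\})>0$ forces Condition A to hold with the \emph{specific} exponent $\al=1$. Once this is established, each assertion becomes a verbatim specialization of an already-proved result: the modulus statement is Theorem \ref{modulus2}, the two compactness statements are Theorem \ref{916co1}, the support-dimension bound is Theorem \ref{811t1}, and the range-dimension bound is Theorem \ref{range}. By Remark \ref{824re}, Condition A implies Condition B, which in turn implies Assumption I, so verifying Condition A with $\al=1$ simultaneously unlocks all four theorems at $\al=1$.

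The only substantive step is the verification of Condition A, and this reduces to one lower bound on $\la_b$ supplied by the Kingman component. Writing $c\equiv\La(\{0\})>0$, I would first observe from (\ref{la_rate}) that the pair-merger rate satisfies
\[
\la_{b,2}=\int_{[0,1]}(1-x)^{b-2}\La(dx)\geq\La(\{0\})=c,
\]
since the integrand equals $1$ at $x=0$ and the rest of the measure contributes nonnegatively. Inserting this into (\ref{la_n}) and discarding the nonnegative terms with $k\geq3$ gives $\la_b\geq\binom{b}{2}\la_{b,2}\geq c\binom{b}{2}$, i.e.\ the total coalescence rate is bounded below by a constant multiple of the Kingman rate. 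Hence $\la_b^{-1}\leq \tfrac{2}{c\,b(b-1)}$, and the telescoping identity $\sum_{b=m+1}^{\infty}\tfrac{1}{b(b-1)}=\tfrac1m$ yields
\[
m\sum_{b=m+1}^{\infty}\la_b^{-1}\leq\frac{2}{c}\qquad\text{for every }m,
\]
so that $\limsup_{m\to\infty}m\sum_{b=m+1}^{\infty}\la_b^{-1}\leq 2/c<\infty$. This is precisely Condition A with $\al=1$.

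With $\al=1$ in force I would then read off the four conclusions. Theorem \ref{modulus2} gives, for fixed $t$, a threshold $\theta<1$ such that $\text{supp}\,X(t+\Delta t)\subseteq\mathbb{B}(\text{supp}\,X(t),\,C(d,1)\sqrt{\Delta t\log(1/\Delta t)})$ for all $0<\Delta t\leq\theta$; since $C(d,1)=C(d)$ and $\sqrt{\Delta t\log(1/\Delta t)}=h(\Delta t)$ by (\ref{7112}), this is exactly the one-sided modulus of continuity with respect to $Ch$. Theorem \ref{916co1} gives the compactness of $\text{supp}\,X(t)$ for all $t>0$ and, given compact $\text{supp}\,X(0)$, the compactness of $\mathcal{R}([0,t))$. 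Finally, substituting $\al=1$ into the bounds $2/\al$ and $2+2/\al$ of Theorems \ref{811t1} and \ref{range} produces $\text{dim}\,\text{supp}\,X(t)\leq2$ for all $t>0$ and $\text{dim}\,\mathcal{R}([\delta,T))\leq4$ for $0<\delta<T$.

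I do not expect a genuine obstacle: the proposition is essentially a corollary that isolates the concrete exponent hidden inside the general hypotheses. The one point requiring care is the bookkeeping of constants---confirming that fixing $\al=1$ collapses $C(d,\al)$ to a purely $d$-dependent constant and that the resulting modulus function coincides on the nose with $Ch$---but no new estimate is needed beyond the elementary lower bound $\la_b\geq c\binom{b}{2}$ coming from the atom at $0$.
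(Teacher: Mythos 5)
Your proposal is correct and follows essentially the same route as the paper: both verify Condition A with $\al=1$ via the lower bound $\la_b\geq\binom{b}{2}\La(\{0\})=\tfrac12\La(\{0\})b(b-1)$ coming from the Kingman atom, sum the telescoping series $\sum_{b>m}\tfrac{1}{b(b-1)}=\tfrac1m$, and then invoke Remark \ref{824re} together with Theorems \ref{modulus2}--\ref{range}. Your only addition is spelling out $\la_{b,2}\geq\La(\{0\})$ directly from (\ref{la_rate}), which the paper leaves implicit in the phrase ``nontrivial Kingman component.''
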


\begin{proof}
Since $\Lambda(\{0\})>0$, the $\Lambda$-coalescent has a nontrivial
Kingman component. Then
\[\la_b\geq \frac{1}{2}\Lambda(\{0\})b(b-1)\]
and
\[\sum_{b=m+1}^{\infty}\frac{1}{\la_b}\leq\sum_{b=m+1}^{\infty}\frac{2}{\Lambda(\{0\})b(b-1)}
=\frac{2}{\Lambda(\{0\})m}, \]
i.e., Condition A holds with $\al=1$. Therefore, the results follow from Remark \ref{824re} and Theorems \ref{modulus2}-\ref{range}.
\end{proof}

\begin{remark}
The uniform upper bound on the Hausdorff dimension of classical
Fleming-Viot support process
was first proved by Reimers
\cite{Rei}, where a non-standard construction of the classical Fleming-Viot process is
used to establish this result.
\end{remark}

Recall the {\it $(c,\ep,\gamma)$-property} introduced in \cite{LZ}. We say that a $\La$-coalescent has the $\(c,\epsilon,\gamma\)$-property, if
there exist constants $c>0$ and $\ep$, $\gamma\in(0,1)$ such that the measure $\La$ restricted to $[0,\ep] $  is absolutely continuous with respect to Lebesgue measure  and $${\La(dx)}\geq cx^{-\gamma}dx \text{\, for all \,} x\in [0,\ep].$$
The $\La$-coalescents with the $\(c,\epsilon,\gamma\)$-property come down from infinity.

\begin{proposition}\label{7153}
Let $X$ be any
$\La$-Fleming-Viot process with underlying Brownian motion in
$\RR^d$ for $d\geq2$. If the associated $\La$-coalescent has the $(c,\ep,\ga)$-property, then given any fixed $t\geq0$, with probability one the process supp\,$X(t)$ has the one-sided modulus of continuity with respect to $Ch$, where $C\equiv C(d,\ga)$ is the constant determined in Theorem \ref{modulus2}.
Further, with probability one supp\,$X(t)$ is compact for all $t>0$ and if supp\,$X(0)$ is compact, then $\mathcal{R}\([0,t)\)$ is also compact for all $t>0$. In addition, with probability one
$$\text{dim}\,\text{supp}\,X(t)\leq 2/\gamma$$
for all $t>0$. Finally, given any $0<\delta<T$, with probability one
$$\text{dim}\,\mathcal{R}\([\delta,T)\)\leq 2+2/\ga.$$
\end{proposition}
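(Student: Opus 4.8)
The plan is to reduce the proposition to the theorems already proved, by showing that the $(c,\ep,\ga)$-property forces Condition A to hold with exponent $\al=\ga$. Once that single fact is secured, every assertion follows exactly as in the proof of Proposition \ref{4.10}: by Remark \ref{824re} Condition A implies Assumption I, so Theorem \ref{modulus2} supplies the one-sided modulus of continuity with a constant $C=C(d,\ga)$; Theorem \ref{916co1} supplies compactness of supp\,$X(t)$ and of $\mathcal{R}([0,t))$; Theorem \ref{811t1} gives $\text{dim}\,\text{supp}\,X(t)\leq 2/\ga$; and Theorem \ref{range} gives $\text{dim}\,\mathcal{R}([\delta,T))\leq 2+2/\ga$. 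Thus the whole proposition rests on the single estimate $\limsup_{m\to\infty}m^{\ga}\sum_{b>m}\la_b^{-1}<\infty$.

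To obtain it, I would first put the total coalescence rate $\la_b$ of \eqref{la_n} in integral form. Integrating the identity $\sum_{k=2}^b\binom{b}{k}x^k(1-x)^{b-k}=1-(1-x)^b-bx(1-x)^{b-1}$ against $x^{-2}\La(dx)$ gives
\[\la_b=\int_{[0,1]}\frac{1-(1-x)^b-bx(1-x)^{b-1}}{x^2}\,\La(dx).\]
The integrand is nonnegative, so discarding the mass of $\La$ on $(\ep,1]$ and inserting $\La(dx)\geq cx^{-\ga}\,dx$ on $[0,\ep]$ yields
\[\la_b\geq c\int_0^{\ep}\frac{1-(1-x)^b-bx(1-x)^{b-1}}{x^{2+\ga}}\,dx,\]
and the substitution $x=u/b$ rewrites the right-hand side as $cb^{1+\ga}\int_0^{b\ep}g_b(u)\,du$ with $g_b(u)=u^{-(2+\ga)}\big(1-(1-u/b)^b-u(1-u/b)^{b-1}\big)$.

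The crux is that $\int_0^{b\ep}g_b(u)\,du$ converges to a finite, strictly positive limit. Pointwise $g_b(u)\to u^{-(2+\ga)}\big(1-e^{-u}-ue^{-u}\big)$ as $b\to\infty$; near $u=0$ the numerator equals $u^2/2+O(u^3)$, so the limiting integrand behaves like $\tfrac12 u^{-\ga}$, integrable precisely because $\ga<1$, while for large $u$ it decays like $u^{-(2+\ga)}$. A dominated-convergence argument then gives $\int_0^{b\ep}g_b(u)\,du\to I_\ga:=\int_0^\infty u^{-(2+\ga)}\big(1-e^{-u}-ue^{-u}\big)\,du\in(0,\infty)$, so $\la_b\geq\tfrac12 cI_\ga\,b^{1+\ga}$ for all large $b$. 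Summing, $\sum_{b>m}\la_b^{-1}\lesssim\sum_{b>m}b^{-(1+\ga)}\lesssim m^{-\ga}$, which is Condition A with $\al=\ga$. As a consistency check, for the Beta$(2-\be,\be)$-coalescent the property holds with $\ga=\be-1$, matching the value $\al=\be-1$ recorded after Condition A.

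The main technical point is the uniform integrability needed to pass to the limit in $\int_0^{b\ep}g_b(u)\,du$, but this turns out to be mild: Bernoulli's inequality gives $1-(1-x)^b\leq bx$ and $1-(1-x)^{b-1}\leq (b-1)x$, whence the numerator is at most $b(b-1)x^2\leq u^2$ after rescaling, so $g_b(u)\leq u^{-\ga}$; combined with the trivial bound $g_b(u)\leq u^{-(2+\ga)}$ coming from the numerator being at most $1$, the family is dominated by the integrable function $u^{-\ga}\wedge u^{-(2+\ga)}$. With that majorant in place the convergence is automatic, and everything else is bookkeeping that channels the rate estimate into Theorems \ref{modulus2}--\ref{range}.
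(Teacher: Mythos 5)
Your proposal is correct, and its overall architecture coincides with the paper's: show that the $(c,\ep,\ga)$-property forces Condition A with $\al=\ga$, then route everything through Remark \ref{824re} (Condition A implies Condition B, which is sufficient for Assumption I) and Theorems \ref{modulus2}--\ref{range}. The only place where you genuinely diverge is the key rate estimate: the paper simply \emph{cites} Lemma 4.13 of \cite{LZ} for the bound $\la_n\geq C(c,\ep,\ga)n^{1+\ga}$ and sums, whereas you reprove that bound from scratch via the integral representation $\la_b=\int_{[0,1]}x^{-2}\bigl(1-(1-x)^b-bx(1-x)^{b-1}\bigr)\La(dx)$ (valid by Tonelli, all terms being nonnegative), restriction to $[0,\ep]$ where $\La(dx)\geq cx^{-\ga}dx$, the substitution $x=u/b$, and dominated convergence. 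Your domination argument is sound: the chain $1-(1-x)^b-bx(1-x)^{b-1}\leq bx\bigl(1-(1-x)^{b-1}\bigr)\leq b(b-1)x^2$ follows from Bernoulli's inequality, giving $g_b(u)\leq u^{-\ga}\wedge u^{-(2+\ga)}$, which is integrable on $(0,\infty)$ exactly because $\ga\in(0,1)$ (as the $(c,\ep,\ga)$-property requires), and the limit $I_\ga=\int_0^\infty u^{-(2+\ga)}(1-e^{-u}-ue^{-u})\,du$ is finite and strictly positive since the integrand behaves like $\tfrac12u^{-\ga}$ near $0$ and like $u^{-(2+\ga)}$ at infinity. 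This yields $\la_b\geq\tfrac12 cI_\ga b^{1+\ga}$ for large $b$, hence $\sum_{b>m}\la_b^{-1}=O(m^{-\ga})$, which is Condition A with $\al=\ga$; the four conclusions are then verbatim the statements of Theorems \ref{modulus2}, \ref{916co1}, \ref{811t1} and \ref{range}, exactly as in the paper's one-line deduction. What your version buys is self-containedness and even a sharper asymptotic ($\la_b$ grows at least like $c\,I_\ga b^{1+\ga}$, with the correct constant contributed by the mass of $\La$ near $0$); what the paper's citation buys is brevity. One cosmetic remark: the intermediate factorization through the finiteness of $\mathbb{E}T_m$ (Remark \ref{824re}) should be stated explicitly when you invoke Theorems \ref{modulus2} and \ref{916co1}, since those are formulated under Assumption I rather than Condition A, but you do gesture at exactly this reduction, so nothing is missing.
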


\begin{proof}
It has been proved by Lemma 4.13 of \cite{LZ} that
for any $n\geq 2$, there exists a positive constant $C(c,\epsilon,\gamma)$
such that the total coalescence rate of the $\La$-coalescent with the $(c,\ep,\ga)$-property
satisfies
$$\lambda_n\geq C(c,\ep,\gamma)n^{1+\gamma}.$$
Then
\begin{equation*}
\begin{split}
\sum_{b=m+1}^{\infty}\frac{1}{\la_b}\leq&\frac{1}{C(c,\ep,\ga)}\int_{m}^{\infty}\frac{1}{x^{1+\ga}}dx\leq\frac{1}{\ga
C(c,\ep,\ga)m^{\ga}},
\end{split}
\end{equation*}
i.e., Condition A holds with $\al=\ga$. Consequently, the results follow from Remark \ref{824re} and Theorems \ref{modulus2}-\ref{range}.
\end{proof}

Now we discuss the support properties for {\it Beta$(2-\be,\be)$-Fleming-Viot process} with underlying Brownian motion.
It is known that  the Beta$(2-\be,\be)$-coalescent stays infinite if
$\be\in\left(0,1\right]$ and comes down from infinity
 if $\be\in\left(1,2\right)$. For $\be\in(1,2)$, given any $\ep\in(0,1)$,
the Beta$\(2-\be,\be\)$-coalescent has the $\(c,\ep,\be-1\)$-property. Therefore, the conclusions of Proposition \ref{7153} hold with $\ga=\be-1$.

For $t\geq 0$ put
\[S_t\equiv\cap_{n=1}^\infty \mathcal{R}([t, t+1/n)).\]

\begin{proposition}\label{prop_mod_uni}
Under Assumption I and for any $T>0$, there exist a positive random
variable $\theta\equiv\theta\(T,d,\al\)<1$ and a constant $C\equiv
C(d,\al)$ such that $\mathbb{P}$-a.s.
\[\text{supp} X(t+\Delta t)\subseteq \mathbb{B} (S_t, Ch(\Delta t)) \]
for all $0\leq t<t+\Delta t\leq T$ and $0<\Delta t\leq\theta$.
\end{proposition}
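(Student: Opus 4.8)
The plan is to reuse verbatim the random $\theta=\theta(T,d,\al)$ and the constant $C=C(d,\al)$ furnished by Theorem \ref{th1}, and to argue throughout on the single full-probability event on which the conclusion of Theorem \ref{th1} holds and on which $\{X_i(u):i\ge1\}\subseteq\text{supp}\,X(u)$ for every $u$ in a fixed countable dense set $D\subseteq[0,T]$ (Lemma \ref{support}). Fix $0\le t<s\le T$ with $s=t+\Delta t$ and $0<\Delta t\le\theta$. Since the neighborhood $\mathbb{B}(S_t,Ch(\Delta t))$ is closed and, by Lemma \ref{support} together with the de Finetti representation underlying the lookdown construction, $\text{supp}\,X(s)=\overline{\{X_j(s):j\ge1\}}$, it suffices to show that each particle location $X_j(s)$ lies within distance $Ch(\Delta t)$ of $S_t$.

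The key idea is to reach $S_t$ by letting the ancestor time descend to $t$ \emph{from the right}. I would choose $\delta_n\downarrow0$ with $0<\delta_n<\Delta t$ and set $r_n=t+\delta_n\in(t,s)$, so that $0<s-r_n=\Delta t-\delta_n<\Delta t\le\theta$ and $r_n,s\in[0,T]$; hence Theorem \ref{th1} applies to the pair $(r_n,s)$. Writing $l_n\equiv L_j^s(r_n)$ for the level at time $r_n$ of the ancestor of the level-$j$ particle at time $s$, Lemma \ref{level} and the definition of $H$ give
\[p_n\equiv X_{l_n}(r_n-),\qquad \bigl|X_j(s)-p_n\bigr|\le H(r_n,s)\le Ch\bigl(\Delta t-\delta_n\bigr).\]
In particular the $p_n$ lie in a fixed closed ball about $X_j(s)$, so some subsequence $p_{n_k}$ converges to a point $p\in\RR^d$; by continuity of $h$ and $\delta_{n_k}\to0$ this yields $\bigl|X_j(s)-p\bigr|\le Ch(\Delta t)$.

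It then remains to check that $p\in S_t$. I would fix $m\ge1$; for all large $n$ one has $\delta_n\le1/m$, hence $(t,r_n)\subseteq[t,t+1/m)$, and for $u\in(t,r_n)\cap D$ Lemma \ref{support} gives $X_{l_n}(u)\in\text{supp}\,X(u)\subseteq\mathcal{R}([t,t+1/m))$. Since the lineage path has left limits, $\lim_{u\uparrow r_n,\,u\in D}X_{l_n}(u)=X_{l_n}(r_n-)=p_n$, and closedness of $\mathcal{R}([t,t+1/m))$ forces $p_n\in\mathcal{R}([t,t+1/m))$. Thus $p_{n_k}\in\mathcal{R}([t,t+1/m))$ for all large $k$, so $p\in\mathcal{R}([t,t+1/m))$; as $m$ was arbitrary, $p\in\cap_{m\ge1}\mathcal{R}([t,t+1/m))=S_t$. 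Hence $X_j(s)\in\mathbb{B}(S_t,Ch(\Delta t))$ for every $j$, giving the claimed inclusion; and since only the fixed $\theta$ and $C$ of Theorem \ref{th1} are used, this holds simultaneously for all admissible $(t,\Delta t)$.

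The hard part is exactly the passage to $S_t$ rather than to $\text{supp}\,X(t)$. Uniformly over all $t\in[0,T]$ — which now includes the \emph{random} jump times of the lineage paths — the left limit $X_l(t-)$ of an ancestor at time $t$ need not lie in $\text{supp}\,X(t)$, so the fixed-time argument behind Theorem \ref{modulus2} breaks down; approaching $t$ through times $r_n>t$ is precisely what forces the ancestor positions into the forward ranges $\mathcal{R}([t,t+1/m))$, at the cost of the diagonal-and-closedness argument above and of the continuity of $h$ needed to recover the sharp radius $Ch(\Delta t)$ in the limit. A secondary, more routine point — making every per-time almost-sure statement hold on one event uniformly in $t$ — I would handle by fixing the countable dense set $D$ for applications of Lemma \ref{support} and invoking $\text{supp}\,X(u)=\overline{\{X_i(u):i\ge1\}}$.
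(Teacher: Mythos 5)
Your overall route is genuinely different from the paper's and, at its core, sound: you work at the particle level, descending to $t$ from the right via the ancestor positions $p_n=X_{L_j^s(t+\delta_n)}((t+\delta_n)-)$, bounding $|X_j(s)-p_n|$ by Theorem \ref{th1}, and trapping a subsequential limit $p$ in $\cap_m\mathcal{R}([t,t+1/m))=S_t$ using Lemma \ref{support} along $D$, the left limits of the level paths, and closedness of the ranges. The paper instead applies Theorem \ref{modulus2} simultaneously at a countable dense set $\{t_i\}$, chooses $t_{i_j}\downarrow t$, writes $t+\Delta t=t_{i_j}+(\Delta t-(t_{i_j}-t))$, and uses $\text{supp}\,X(t_{i_j})\subseteq\mathcal{R}([t,t+1/n))$. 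However, your argument has one genuine gap, at the very first reduction: you invoke $\text{supp}\,X(s)=\overline{\{X_j(s):j\geq1\}}$ at $s=t+\Delta t$, and as $(t,\Delta t)$ ranges over all admissible pairs this is an \emph{uncountable} family of times. Lemma \ref{support} and the de Finetti/empirical-measure identity hold $\mathbb{P}$-a.s.\ at each \emph{fixed} time (hence on one event for all $s$ in your countable set $D$), but not simultaneously for all $s\in(0,T]$: when $\La((0,1])>0$ the process $X$ jumps at random times, and the a.s.\ convergence $X^{(n)}\rightarrow X$ in $D_{M_1(\RR^d)}([0,\infty))$ does not yield marginal convergence $X^{(n)}(s)\rightarrow X(s)$ at such times — the null set in the fixed-time statement depends on $s$. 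Your flagged ``routine point'' covers only the applications of Lemma \ref{support} at $u\in D$; the inclusion $\text{supp}\,X(s)\subseteq\overline{\{X_j(s):j\geq1\}}$ at arbitrary $s$ is precisely the step the paper is careful never to use off a countable set.

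The repair is the device used in the paper's proof of Theorem \ref{modulus2}. Run your particle argument only for target times $s'\in D$ (the base times $r_n=t+\delta_n$ may remain arbitrary reals, since Theorem \ref{th1} holds a.s.\ for all pairs in $[0,T]$ at once), concluding $\text{supp}\,X(s')\subseteq\mathbb{B}(S_t,Ch(s'-t))$ for all $s'\in D\cap(t,t+\theta]$. Then for a general $s=t+\Delta t$ take $s_l\downarrow s$ with $s_l\in D$, and use right continuity of $X$ together with Lemma \ref{901t1}:
\begin{equation*}
\text{supp}\,X(s)\subseteq\bigcap_{m\geq1}\overline{\bigcup_{l\geq m}\text{supp}\,X(s_l)}
\subseteq\bigcap_{m\geq1}\mathbb{B}\bigl(S_t,Ch(s_m-t)\bigr)=\mathbb{B}\bigl(S_t,Ch(s-t)\bigr),
\end{equation*}
the last identity holding because $S_t$ is closed in $\RR^d$, so the distance to it is attained and $\mathbb{B}(S_t,\eta)=\{y:d(y,S_t)\leq\eta\}$. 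With this one change, the remainder of your argument — $p_n\in\mathcal{R}([t,t+1/m))$ via left limits along $D$ and closedness of the range, hence $p\in S_t$ and $|X_j(s')-p|\leq Ch(s'-t)$ — goes through and yields the proposition by a more self-contained route than the paper's, at the cost of redoing by hand the dyadic-to-general-time passage that the paper gets for free by citing Theorem \ref{modulus2}.
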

We also defer the proof of Proposition \ref{prop_mod_uni} to Section \ref{s5}.

\section{Proofs of Theorems \ref{th1}-\ref{range}, Corollary \ref{121013} and Proposition \ref{prop_mod_uni}}\label{s5}

\subsection{Modulus of continuity for the ancestry process}

In this subsection we first obtain some estimates on the
$\Lambda$-coalescent and on the maximal dislocation of the particles
from their respective ancestors.

Denote by $\lfloor x\rfloor$ the integer part of $x$ for any $x\in\RR$.
Given $T>0$ and $\Delta>0$, we can divide the interval $[0,T]$ into
subintervals  as follows:
\begin{equation*}
[0,\Delta], [\Delta,2\Delta],\ldots,
\left[\lfloor T/\Delta-1\rfloor\Delta, \lfloor T/\Delta\rfloor\Delta\right],[\lfloor T/\Delta\rfloor\Delta, T].
\end{equation*}
Set $\Delta\equiv\Delta_n=2^{-n}$. Let $S_n^T$ be the collection of the endpoints of the first $\lfloor2^nT\rfloor$ subintervals, i.e.,
$$S_n^T\equiv\left\{k2^{-n}: 0\leq k\leq2^nT\right\}.$$ Put
\begin{equation*}
S^T\equiv\bigcup_{n\geq1}S_n^T=\bigcup_{n\geq 1}\left\{k2^{-n}: 0\leq k\leq2^nT\right\}.
\end{equation*} Clearly, given any $T>0$, $S^T$ is the collection of all the dyadic rationals in $[0,T]$. So $S^T$ is a dense subset of $[0, T]$.

For any $n\in[\infty]$, let $\left\{\mathbb{A}_{n,k}: 1\leq k\leq
2^nT\right\}$ be the collection of the first $\lfloor2^nT\rfloor$ subintervals in the partition so that
$$\mathbb{A}_{n,k}\equiv\left[{(k-1)}{2^{-n}},{k}{2^{-n}}\right].$$
For simplicity, we denote
$$N_{n,k}\equiv N^{\(k-1\)2^{-n}, k2^{-n}}.$$

Also denote by $H_{n,k}$ the maximal dislocation over interval
$\mathbb{A}_{n,k}$ of all the Brownian motions followed by the
countably many particles alive at time $k2^{-n}$ and their respective
ancestors at time $\(k-1\)2^{-n}$, i.e.,
\begin{equation*}
\begin{split}
H_{n,k}&\equiv H\(\(k-1\)2^{-n}, k2^{-n}\).
\end{split}
\end{equation*}
For any positive integer $m$, let
\begin{equation*}
T^{n,k}_m\equiv\inf\big\{t\in [0,2^{-n}]: \#\Pi^{k2^{-n}}(t)\leq m\big\}
\end{equation*}
with the convention $\inf\emptyset=2^{-n}$. Notice that for any
fixed $n\in[\infty]$ and $m$, the random times $\{T^{n,k}_m: 1\leq
k\leq2^nT\}$ follow the same distribution. Write
$T^{n,k}_x\equiv T^{n,k}_{\lfloor x\rfloor}$ for any $x>0$.


We need a standard estimate on Brownian motion.

\begin{lemma}\label{0108}
Given any $x>0$ and $d$-dimensional standard Brownian motion
$\(\mathbf{B}\(s\)\)_{s\geq 0}$, we have
\begin{eqnarray*}
\mathbb{P}\(\sup_{\begin{smallmatrix}0\leq s\leq
t\end{smallmatrix}}|\mathbf{B}(s)|> x\)\leq
\sqrt{\frac{8d^3t}{\pi}}\frac{1}{x}\exp\(-\frac{x^2}{2dt}\).
\end{eqnarray*}
\end{lemma}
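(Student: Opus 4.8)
The plan is to reduce the $d$-dimensional estimate to a one-dimensional one, apply the reflection principle, and finish with an elementary Gaussian tail bound, keeping careful track of constants so the final expression collapses exactly to the stated form.

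First I would observe that if $\sup_{0\leq s\leq t}|\mathbf{B}(s)|>x$, then there is some $s_0\in[0,t]$ with $|\mathbf{B}(s_0)|>x$, so $\sum_{i=1}^d B_i(s_0)^2>x^2$ and hence at least one coordinate satisfies $|B_i(s_0)|>x/\sqrt d$. Writing $B_1,\ldots,B_d$ for the independent one-dimensional coordinates of $\mathbf{B}$, this yields the event inclusion
$$\left\{\sup_{0\leq s\leq t}|\mathbf{B}(s)|>x\right\}\subseteq\bigcup_{i=1}^d\left\{\sup_{0\leq s\leq t}|B_i(s)|>x/\sqrt d\right\}.$$
A union bound together with the fact that the coordinates are identically distributed then reduces the problem to estimating $\mathbb{P}(\sup_{0\leq s\leq t}|B_1(s)|>x/\sqrt d)$ and multiplying by $d$.

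Next I would control the running maximum of the absolute value of a one-dimensional Brownian motion. By symmetry and the reflection principle, for any $y>0$,
$$\mathbb{P}\left(\sup_{0\leq s\leq t}|B_1(s)|>y\right)\leq 2\,\mathbb{P}\left(\sup_{0\leq s\leq t}B_1(s)>y\right)=4\,\mathbb{P}\left(B_1(t)>y\right).$$
I would then bound the Gaussian tail using the elementary inequality $1\leq u/y$ valid for $u\geq y$, giving
$$\mathbb{P}\left(B_1(t)>y\right)=\int_y^\infty\frac{1}{\sqrt{2\pi t}}e^{-u^2/(2t)}\,du\leq\frac{1}{y}\int_y^\infty\frac{u}{\sqrt{2\pi t}}e^{-u^2/(2t)}\,du=\frac{\sqrt t}{y\sqrt{2\pi}}e^{-y^2/(2t)}.$$
Finally, substituting $y=x/\sqrt d$ and collecting the prefactors as $d\cdot 4\cdot\frac{\sqrt t\,\sqrt d}{x\sqrt{2\pi}}=\frac{4d^{3/2}\sqrt t}{x\sqrt{2\pi}}$, while noting $e^{-y^2/(2t)}=e^{-x^2/(2dt)}$, produces the claimed bound.

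Since this is a standard estimate there is no genuine obstacle; the only point requiring care is the bookkeeping of constants. One must check that the factor of $4$ coming from the two-sided reflection bound combines with the $1/\sqrt{2\pi}$ from the Gaussian tail to give precisely $4/\sqrt{2\pi}=\sqrt{8/\pi}$, and that the coordinatewise reduction correctly inserts the $\sqrt d$ inside the exponent (turning $e^{-y^2/(2t)}$ into $e^{-x^2/(2dt)}$) together with the $d^{3/2}$ prefactor, so that the right-hand side reads exactly $\sqrt{8d^3t/\pi}\,x^{-1}\exp(-x^2/(2dt))$.
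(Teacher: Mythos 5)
Your proof is correct, and it recovers the stated bound exactly: the coordinatewise reduction contributes the factor $d\cdot\sqrt{d}$, the symmetry-plus-reflection step contributes the factor $4=\sqrt{16}$, and the Gaussian tail estimate contributes $\sqrt{t/(2\pi)}\,y^{-1}e^{-y^2/(2t)}$ with $y=x/\sqrt{d}$, which indeed assemble into $\sqrt{8d^3t/\pi}\,x^{-1}\exp\left(-x^2/(2dt)\right)$. The paper states this lemma without proof as a standard estimate, and your argument (union bound over coordinates, reflection principle, elementary tail bound via $1\leq u/y$ on the domain of integration) is precisely the canonical derivation that produces this constant, so there is nothing to add beyond noting that the event inclusion and each constant check out.
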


\begin{lemma}\label{121302}
Under Assumption I and for any $T>0$, there exists a positive constant
$C_4\(d,\al\)$ such that $\mathbb{P}$-a.s.
$$\max_{1\leq k\leq2^nT}H_{n,k}\leq C_4(d,\al)h\(2^{-n}\)$$
for $n$ large enough, where $h$ is defined by
(\ref{7112}).
\end{lemma}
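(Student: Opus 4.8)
The plan is to control the inner supremum hidden in
\[
H_{n,k}=\max_{1\le l\le N_{n,k}}\ \max_{j\in\pi_l}\big|X_j(k2^{-n})-X_l((k-1)2^{-n}-)\big|,
\]
which ranges over the \emph{infinitely many} particles $j$ in each ancestral block $\pi_l$, by a multi-scale (chaining) decomposition of the interval $\mathbb{A}_{n,k}$ toward its right endpoint. Write $s=k2^{-n}$, $r=(k-1)2^{-n}$. For $p\ge n$ and each level $j$, let $Y_p(j)$ be the spatial location at time $s-2^{-p}$ of the ancestor (in the sense of Lemma \ref{level}) of the particle at level $j$ at time $s$; thus $Y_n(j)=X_l(r-)$, while $Y_p(j)\to X_j(s)$ as $p\to\infty$. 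Telescoping gives
\[
X_j(s)-X_l(r-)=\sum_{p=n}^{\infty}\big(Y_{p+1}(j)-Y_p(j)\big),
\qquad\text{so}\qquad
H_{n,k}\le\sum_{p=n}^{\infty}D_p,\quad D_p\equiv\max_j\big|Y_{p+1}(j)-Y_p(j)\big|.
\]
The decisive structural facts are that the position along each ancestral lineage is \emph{continuous} and evolves as a $d$-dimensional Brownian motion through the lookdown events (at a birth event the descending lineage assumes its parent's location), and that the spatial Brownian motions are independent of the Poisson/genealogical data. Hence each increment $Y_{p+1}(j)-Y_p(j)$ is the displacement of a Brownian motion over a time interval of length $2^{-(p+1)}$, and the number of \emph{distinct} such increments is at most the block count $M_{p+1}\equiv\#\Pi^{s}(2^{-(p+1)})$ at the finer endpoint.

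First I would bound the block counts via the coming-down control of Assumption I. Since $\{M_{p+1}>m\}=\{T_m>2^{-(p+1)}\}$ and Assumption I gives $\mathbb{E}T_m\le Cm^{-\al}$ for $m$ large, Markov's inequality yields $\mathbb{P}(M_{p+1}>m)\le Cm^{-\al}2^{p+1}$. I would then fix thresholds $m_p\asymp 2^{(p+1)(1+\ep_0)/\al}$ with $\ep_0>1$, so that after summing over the scales $p\ge n$ and the $\lfloor 2^nT\rfloor$ intervals $k$ the tail probabilities are summable in $n$; Borel--Cantelli then makes the event $\{M_{p+1}\le m_p\text{ for all }p\ge n,\ 1\le k\le 2^nT\}$ hold for all $n$ large, $\mathbb{P}$-a.s.

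Next I would estimate $D_p$ on this good event. Conditioning on the genealogy (which fixes the lineage structure and $M_{p+1}$) and using independence of the spatial motions, a union bound over the at most $m_p$ lineages combined with Lemma \ref{0108}, with the choice $x_p=a\sqrt{2^{-(p+1)}(p+1)}$, gives
\[
\mathbb{P}\big(D_p>x_p,\ M_{p+1}\le m_p\big)\le m_p\,C\,\frac{\sqrt{2^{-(p+1)}}}{x_p}\exp\!\Big(-\frac{x_p^2}{2d\,2^{-(p+1)}}\Big)
= C\,\frac{m_p}{a\sqrt{p+1}}\exp\!\Big(-\frac{a^2(p+1)}{2d}\Big).
\]
Because $m_p$ is only exponential in $p$ with rate of order $1/\al$, choosing $a=a(d,\al)$ large enough forces the exponent strictly negative and renders these probabilities summable over $p\ge n$, over $k\le 2^nT$, and over $n$; a second Borel--Cantelli application yields $\mathbb{P}$-a.s.\ that, for all large $n$ and all $k$, $D_p\le x_p$ for every $p\ge n$. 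On this event,
\[
H_{n,k}\le\sum_{p=n}^{\infty}x_p=a\sum_{p=n}^{\infty}\sqrt{2^{-(p+1)}(p+1)}\le C(d,\al)\sqrt{2^{-n}\,n}\le C_4(d,\al)\,h(2^{-n}),
\]
where the geometric decay of the summands makes the $p=n$ term dominant and reproduces $h(2^{-n})=\sqrt{2^{-n}\log 2^{n}}$ up to a constant.

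I expect the main obstacle to be precisely the inner supremum over the infinitely many lineages $j\in\pi_l$: a single-scale union bound over lineages at scale $2^{-n}$ is hopeless since there are infinitely many. The multi-scale decomposition resolves this by trading that infinitude against the coming-down-from-infinity control of $M_{p+1}$ at each scale and the rapidly shrinking time increments, but it forces one to balance, simultaneously over all scales $p$ and all $\lfloor 2^nT\rfloor$ intervals, the growth $m_p\asymp 2^{(p+1)/\al}$ of the lineage counts against the Gaussian tail. This balancing is exactly where the constant $C_4$ picks up its dependence on both $d$ and $\al$, and where the full strength of Assumption I (rather than mere coming down from infinity) is needed.
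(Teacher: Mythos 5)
Your proof is correct, and it reaches the lemma by a decomposition genuinely dual to the paper's. The paper splits each $\mathbb{A}_{n,k}$ at the \emph{random} times $b_l^{n,k}=k2^{-n}-T^{n,k}_{8^{(n+l)/\al}}$ at which the block count of $\Pi^{k2^{-n}}$ crosses the geometric thresholds $8^{(n+l)/\al}$: the number of Brownian lineages on the $l$-th subinterval is then deterministically at most $8^{(n+l+1)/\al}$, and all the randomness sits in the subinterval lengths, tamed via Markov's inequality, $\mathbb{P}\(\left|J_l^{n,k}\right|>2^{-(n+2l)}\)\leq 2^{n+2l}\,\EE T^{n,k}_{2^{(3n+3l)/\al}}\leq C2^{-(2n+l)}$ under Assumption I, so a single Borel--Cantelli application suffices. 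You instead chain over the \emph{deterministic} dyadic scales $s-2^{-p}$ and push the randomness into the block counts $M_{p+1}$, which costs a second Borel--Cantelli (with the constraint $\ep_0>1$ exactly calibrated to beat the union over the $\lfloor 2^nT\rfloor$ subintervals) but buys fixed window lengths $2^{-(p+1)}$, so Lemma \ref{0108} applies with a clean deterministic time parameter and you never face the degenerate subintervals $\left|J_l^{n,k}\right|=0$ for which the paper must argue $D_l^{n,k}=0$ separately. Quantitatively the two bookkeepings coincide: with $\ep_0=2$ your lineage cap $m_p\asymp 2^{3(p+1)/\al}=8^{(p+1)/\al}$ is the paper's $8^{(n+l+1)/\al}$ under $p=n+l$, and your largeness requirement on $a$ mirrors the paper's choice $C_1(d,\al)>\sqrt{2d\(3/\al+1\)}$. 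Two details you should make explicit, though neither is a gap: (i) the telescoping needs $Y_p(j)\to X_j(s)$ as $p\to\infty$, which holds a.s. because for fixed $j$ only finitely many lookdown events involve levels $\leq j$ near $s$ and a.s. no event falls on the countably many deterministic dyadic times --- this is precisely the point the paper delegates to Lemmas 4.6--4.7 of \cite{LZ} when asserting $H_{n,k}\leq D^{n,k}$; (ii) your union bound at scale $p$ must indeed be taken conditionally on the genealogical data, as you indicate, since distinct lineages share path segments and are not independent, but the bound $\mathbb{P}\(D_p>x_p,\ M_{p+1}\leq m_p\)\leq m_p\,\mathbb{P}\(\sup_{0\leq u\leq 2^{-(p+1)}}|\mathbf{B}(u)|>x_p\)$ only requires each single lineage to be a Brownian path given the genealogy, which is the same fact the paper uses when counting ``Brownian motion paths connecting the ancestors.''
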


\begin{proof}
Given any $n$ and $1\leq k\leq2^nT$, we first divide each interval $\mathbb{A}_{n,k}$ into countably many
subintervals as follows:
$$J_0^{n,k}\equiv\left[{(k-1)}{2^{-n}},{k}{2^{-n}}-T^{n,k}_{8^{(n+1)/\al}}\right]$$
and $$J_l^{n,k}\equiv\left[{k}{2^{-n}}-T^{n,k}_{8^{\(n+l\)/\al}},
{k}{2^{-n}}-T^{n,k}_{8^{\(n+l+1\)/\al}}\right]$$ for
$l=1,2,3,\ldots$.
 Consequently, the
lengths of these countably many subintervals satisfy that
$$\left|J_0^{n,k}\right|\leq{2^{-n}}\text{~~and~~}
\left|J_l^{n,k}\right|\leq
T^{n,k}_{8^{\(n+l\)/\al}}= T^{n,k}_{2^{\(3n+3l\)/\al}}\text{~~for~~}l=1,2,3,\ldots.$$

The right endpoints of these subintervals
$\(b^{n,k}_l\)_{l\geq1}\equiv\(k2^{-n}-T^{n,k}_{2^{\(3n+3l\)/\al}}\)_{l\geq
1}$ consist of a sequence of random times converging increasingly to
$k2^{-n}$. Set $b_0^{n,k}\equiv\(k-1\)2^{-n}$ for convenience.

For $l=0,1,2,\ldots$, let
$D_l^{n,k}$ be the maximal dislocation
of the ancestors (for those countably many particles alive at time $k2^{-n}$) at time $b_{l+1}^{n,k}$ from their respective ancestors at time $b_l^{n,k}$, i.e.,
\begin{equation}\label{8232}
\begin{split}
D_l^{n,k}\equiv&\max_{1\leq i\leq N^{b_l^{n,k},k2^{-n}}}\max_{j\in\pi_i}\left|X_{L_j^{k2^{-n}}\(b^{n,k}_{l+1}\)}\(b^{n,k}_{l+1}-\)-X_i\(b_l^{n,k}-\)\right|,
\end{split}
\end{equation}
where $\left\{\pi_i: 1\leq i\leq N^{b_l^{n,k},k2^{-n}}\right\}$
denotes the collection of all the disjoint blocks of partition
$\Pi^{k2^{-n}}\(k2^{-n}-b_l^{n,k}\)$ ordered by their least
elements.

In the case of $b_{l+1}^{n,k}=b_l^{n,k}$, i.e.,
$\left|J_l^{n,k}\right|=0$, which corresponds to the situation of either
$T^{n,k}_{2^{\(3n+3l+3\)/\al}}=2^{-n}$ or $T^{n,k}_{2^{\(3n+3l+3\)/\al}}=T^{n,k}_{2^{\(3n+3l\)/\al}}$,
it follows from Lemma
\ref{level} that
$$L_j^{k2^{-n}}\(b^{n,k}_{l+1}\)=L_j^{k2^{-n}}\(b^{n,k}_{l}\)=i$$
for any $j\in\pi_i$ with $1\leq i\leq N^{b_l^{n,k},k2^{-n}}$. Hence
we have $D_l^{n,k}=0$ in (\ref{8232}).

By the lookdown construction and the coming down from infinity
property, there exists a finite number of ancestors at each time
$b^{n,k}_l$, $l=0,1,2,\ldots$ for those countably many particles alive
at time $k2^{-n}$, i.e.,
$$\#\left\{L_j^{k2^{-n}}\(b_l^{n,k}\): j\in[\infty]\right\}<\infty.$$
So both maximums in (\ref{8232}) are in fact taken over finite sets.
Put
$$D^{n,k}\equiv\sum_{l=0}^{\infty}D_l^{n,k}.$$

For dimension $d$ and constant $\al$ in Assumption I, let
$C_1\(d,\al\)$ be a positive constant satisfying
$$C_1\(d,\al\)>\sqrt{2d\(3/\al+1\)}.$$

Now we estimate
the total maximal dislocation $D^{n,k}$ as follows. Let
\begin{equation*}
\begin{split}
I_n\equiv& \mathbb{P}\(\max_{1\leq k\leq
2^nT}D^{n,k}> \sum_{l=0}^{\infty}C_1\(d,\al\)h\(2^{-\(n+2l\)}\)\).\\
\end{split}
\end{equation*}

Since $D^{n,k}=\sum_{l=0}^{\infty}D_l^{n,k}$, we have
\begin{equation*}
\begin{split}
&\left\{D^{n,k}>
\sum_{l=0}^{\infty}C_1\(d,\al\)h\(2^{-\(n+2l\)}\)\right\}
\subseteq\bigcup_{l=0}^{\infty}\left\{D_l^{n,k}>
C_1\(d,\al\)h\(2^{-\(n+2l\)}\)\right\}.
\end{split}
\end{equation*}
Therefore,
\begin{equation*}
\begin{split}
I_n\leq&
\sum_{k=1}^{2^nT}\sum_{l=0}^{\infty}\mathbb{P}\(D_l^{n,k}>
C_1\(d,\al\)h\(2^{-\(n+2l\)}\)\).
\end{split}
\end{equation*}

Under Assumption I, there exists a positive constant $C$ such that for $\mathbf{N}$ large enough and for all $n>\mathbf{N}$, $\EE T_{8^{n/\al}}\leq C8^{-n}$.
For all those $n>\mathbf{N}$,
since $D_l^{n,k}=0$ for those $l$ with interval length $\left|J_l^{n,k}\right|=0$, we only need to
consider the case of $\left|J_l^{n,k}\right|>0$.

Observe that for $l=0,1,2,\ldots$, the total number of Brownian motion
paths connecting the ancestors (of the countably many particles
alive at $k2^{-n}$) at time $b^{n,k}_{l+1}$ to their respective
ancestors at earlier time $b^{n,k}_l$
 is at most $8^{\(n+l+1\)/\al}$. Since $|J_0^{n,k}|=b^{n,k}_1-b^{n,k}_0\leq 2^{-n}$, we
have
\begin{equation*}
\begin{split}
&\mathbb{P}\(D_0^{n,k}> C_1\(d,\al\)h\(2^{-n}\)\)
\leq8^{\frac{n+1}{\al}}\mathbb{P}\(\sup_{\begin{smallmatrix}0\leq
s\leq 2^{-n}\end{smallmatrix}}|\mathbf{B}\(s\)|>
C_1\(d,\al\)h\({2^{-n}}\)\).
\end{split}
\end{equation*}
For $l=1,2,\ldots,$ we have
\begin{equation*}
\begin{split}
&\mathbb{P}\(D_l^{n,k}> C_1\(d,\al\)h\(2^{-\(n+2l\)}\)\)\\
\leq&\mathbb{P}\(\left|J_l^{n,k}\right|>2^{-\(n+2l\)}\)+\mathbb{P}\(D_l^{n,k}>
C_1\(d,\al\)h\(2^{-\(n+2l\)}\),0<\left|J_l^{n,k}\right|\leq2^{-\(n+2l\)}\).
\end{split}
\end{equation*}
Since $|J_l^{n,k}|\leq T^{n,k}_{2^{(3n+3l)/\al}}$, for any
$n>\mathbf{N}$ the length of interval $J_l^{n,k}$ satisfies
\begin{equation*}
\begin{split}
\mathbb{P}\(\left|J_l^{n,k}\right|>2^{-\(n+2l\)}\)\leq&\mathbb{P}\(T^{n,k}_{2^{\(3n+3l\)/\al}}>2^{-\(n+2l\)}\)\\
\leq& 2^{n+2l}{\EE
T^{n,k}_{2^{\(3n+3l\)/\al}}}{}\leq {C}2^{-(2n+l)}.
\end{split}
\end{equation*}
 We further have
\begin{equation*}
\begin{split}
&\mathbb{P}\(D_l^{n,k}> C_1\(d,\al\)h\(2^{-\(n+2l\)}\)\)\\
\leq&{C}{2^{-\(2n+l\)}}+8^{\frac{n+l+1}{\al}}\mathbb{P}\(\sup_{\begin{smallmatrix}0\leq
s\leq 2^{-\(n+2l\)}\end{smallmatrix}}|\mathbf{B}\(s\)|>
C_1\(d,\al\)h\(2^{-\(n+2l\)}\)\).
\end{split}
\end{equation*}
Therefore,
\begin{equation*}
\begin{split}
I_{n}\leq& 2^nT8^{\frac{n+1}{\al}}\mathbb{P}\(\sup_{\begin{smallmatrix}0\leq
s\leq 2^{-n}\end{smallmatrix}}|\mathbf{B}\(s\)|>
C_1\(d,\al\)h\({2^{-n}}\)\)\\
&+2^nT\sum_{l=1}^{\infty}\({C}{2^{-\(2n+l\)}}+8^{\frac{n+l+1}{\al}}\mathbb{P}\(\sup_{\begin{smallmatrix}0\leq
s\leq 2^{-\(n+2l\)}\end{smallmatrix}}|\mathbf{B}\(s\)|>
C_1\(d,\al\)h\(2^{-\(n+2l\)}\)\)\)\\
=&\sum_{l=1}^{\infty}{CT}{2^{-\(n+l\)}}+2^nT\sum_{l=0}^{\infty}8^{\frac{n+l+1}{\al}}\mathbb{P}\(\sup_{\begin{smallmatrix}0\leq
s\leq 2^{-\(n+2l\)}\end{smallmatrix}}|\mathbf{B}\(s\)|>
C_1\(d,\al\)h\(2^{-\(n+2l\)}\)\).
\end{split}
\end{equation*}

It follows from Lemma \ref{0108} that
\begin{eqnarray*}
\begin{split}
&\mathbb{P}\(\sup_{\begin{smallmatrix}0\leq s\leq
2^{-\(n+2l\)}\end{smallmatrix}}|\mathbf{B}\(s\)|>
C_1\(d,\al\)h\(2^{-\(n+2l\)}\)\)\\
\leq&\frac{1}{C_1(d,\al)}\sqrt{\frac{8d^3}{\pi
(n+2l)\log2}}\exp\(-\frac{C^2_1(d,\al)\(n+2l\)\log2}{2d}\)\\
\leq&\frac{1}{C_1(d,\al)}\sqrt{\frac{8d^3}{\pi
\log2}}2^{-\frac{C_1^2(d,\al)\(n+2l\)}{2d}}\\
\equiv&C_2(d,\al)2^{-\frac{C_1^2(d,\al)\(n+2l\)}{2d}}.
\end{split}
\end{eqnarray*}
Therefore, for any $n>\mathbf{N}$ we have
\begin{equation*}
\begin{split}
I_{n}&\leq CT2^{-n}+2^nT\sum_{l=0}^{\infty}8^{\frac{n+l+1}{\al}}C_2(d,\al)2^{-\frac{C_1^2(d,\al)\(n+2l\)}{2d}}\\
&\leq{CT}{2^{-n}}+\sum_{l=0}^{\infty}TC_2(d,\al)2^{-\(\frac{C_1^2(d,\al)}{2d}-\frac{3}{\al}-1\)n-\(\frac{C_1^2(d,\al)}{d}-\frac{3}{\al}\)l+\frac{3}{\al}}.\\
\end{split}
\end{equation*}

Since $C_1\(d,\al\)>\sqrt{2d\(3/\al+1\)}$, it follows that
\begin{equation}\label{I_{n,1}}
\begin{split}
I_{n}\leq&{CT}{2^{-n}}+TC_3(d,\al)2^{-\(\frac{C_1^2(d,\al)}{2d}-\frac{3}{\al}-1\)n},
\end{split}
\end{equation}
where
$$C_3(d,\al)\equiv\sum_{l=0}^{\infty}C_2(d,\al)2^{-\(\frac{C_1^2(d,\al)}{d}-\frac{3}{\al}\)l+\frac{3}{\al}}.$$
Both terms on the right hand side of  (\ref{I_{n,1}}) are summable
with respect to $n$. Thus, $\sum_{n}I_{n}<\infty$, and it follows
from the Borel-Cantelli lemma that $\mathbb{P}$-a.s.
\begin{equation*}
\begin{split}
\max_{1\leq k\leq
2^nT}D^{n,k}
\leq&\sum_{l=0}^{\infty}C_1\(d,\al\)h\(2^{-\(n+2l\)}\)\\
\leq&C_1\(d,\al\)\sqrt{2^{-n}n\log2}\(1+\sum_{l=1}^{\infty}\sqrt{2^{-2l+1}l}\)\\
\equiv&C_4\(d,\al\)\sqrt{2^{-n}n\log2}
\end{split}
\end{equation*}
for $n$ large enough.

By the lookdown construction and the arguments in Lemmas 4.6-4.7
of \cite{LZ} we have $H_{n,k}\leq D^{n,k}$. Thus, $\mathbb{P}$-a.s.
\begin{equation*}
\max_{1\leq k\leq 2^nT}H_{n,k}\leq\max_{1\leq k\leq
2^nT}D^{n,k}\leq C_4(d,\al)h\(2^{-n}\)
\end{equation*}
for $n$ large enough.
\end{proof}

 Lemma \ref{ine} follows from the lookdown construction.
\begin{lemma}\label{ine}
For any $r,t,s$ with $0\leq r\leq t\leq s$ we have
\begin{equation*}
\begin{split}
&H\(r,s\)\leq H\(r,t\)+H\(t,s\)
\end{split}
\end{equation*}
with the convention $H\(r,r\)=H\(s,s\)\equiv 0$.
\end{lemma}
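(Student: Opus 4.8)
The plan is to prove this as a triangle inequality for ancestral displacements, where the essential structural input is the composition (flow) property of the genealogy supplied by the lookdown construction.

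First I would fix $0\leq r\leq t\leq s$ and track a single particle at level $j$ at time $s$. Writing $a\equiv L_j^s(t)$ for its ancestor's level at the intermediate time $t$ and $b\equiv L_j^s(r)$ for its ancestor's level at time $r$, the first key step is to record the flow property $L_j^s(r)=L_a^t(r)$: the time-$r$ ancestor of the time-$s$ particle $j$ coincides with the time-$r$ ancestor of its own time-$t$ ancestor. This is exactly the consistency built into the lookdown genealogy, since following lookdown/birth events backwards in time is a well-defined backward flow; together with Lemma \ref{level} it identifies the three relevant positions as $X_j(s)$ at time $s$, $X_a(t-)$ at time $t$, and $X_b(r-)$ at time $r$.

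Next I would apply the ordinary triangle inequality in $\RR^d$,
$$|X_j(s)-X_b(r-)|\leq |X_j(s)-X_a(t-)|+|X_a(t-)-X_b(r-)|,$$
and bound each term separately. The first term is precisely the dislocation of particle $j$ from its time-$t$ ancestor, hence at most $H(t,s)$. For the second term, the flow property lets me read it as the dislocation of the time-$t$ particle at level $a$ from its own time-$r$ ancestor, hence at most $H(r,t)$. Maximizing over all levels $j$, and using that the blocks are finite by coming down from infinity so that the maxima are attained, yields $H(r,s)\leq H(r,t)+H(t,s)$; the boundary cases $t=r$ or $t=s$ reduce to the convention $H(r,r)=H(s,s)=0$.

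The main obstacle is a bookkeeping point at the meeting time $t$: the definition of $H$ uses the left-limit position $X_{\cdot}(t-)$ at the earlier endpoint but the genuine position $X_{\cdot}(t)$ at the later endpoint, so splicing $H(r,t)$ with $H(t,s)$ requires the ancestor of $j$ not to be displaced by a birth event occurring exactly at $t$. I would handle this by checking that the left-continuity of $L_j^s(\cdot)$ together with the $t-$ conventions keeps the intermediate position consistent, and by noting that in every intended application $t$ is a dyadic rational, hence almost surely a continuity point of all the driving Poisson processes, so no jump occurs at $t$ and the two positions agree. This is the only place where the lookdown structure does genuine work; everything else is the triangle inequality.
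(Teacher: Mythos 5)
Your argument is correct, and it supplies exactly what the paper leaves implicit: the paper gives no proof of Lemma \ref{ine} beyond the one-line assertion that it ``follows from the lookdown construction,'' and the intended content is precisely your composition property $L_j^s(r)=L_{L_j^s(t)}^t(r)$ combined with the Euclidean triangle inequality, with the maximum over levels taken at the end. Two small corrections. First, your claim that the maxima are attained because ``the blocks are finite by coming down from infinity'' is misstated: coming down from infinity makes the \emph{number} of blocks $N^{r,s}$ finite, but each block $\pi_l$ is an infinite set of levels, so the inner maximum is really a supremum; this is harmless, since subadditivity of suprema is all the triangle-inequality step requires. Second, your reduction to dyadic $t$ (almost surely no Poisson point of $\mathbf{N}_{ij}$ or $\tilde{\mathbf{N}}$ at any fixed countable set of times, even though the jump times of $\tilde{\mathbf{N}}$ may be dense) is legitimate and covers every application of the lemma in the paper, where it is only invoked at dyadic endpoints; but it is worth noting the lemma is in fact pathwise true even when a birth event occurs exactly at time $t$: the lineage position $X_a(t-)$ with $a=L_j^s(t)$ equals the time-$t$ position $X_{a'}(t)$ of the (possibly upward-shifted) continuation of that pre-event particle at some level $a'\geq a$, and tracing $a'$ back through the event at $t$ --- which the $\int_{r-}^{t}$ convention in the genealogy equations includes --- returns it to level $a$, so that $L_{a'}^t(r)=L_j^s(r)$ and the second term is bounded by $H(r,t)$ without any continuity assumption at $t$.
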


We are ready to prove the one-sided modulus of continuity for the ancestry process.
\begin{proof}[{\bf Proof of Theorem \ref{th1}}]
We first show that $\mathbb{P}$-a.s. for all $r,s\in S^T$ satisfying $0<s-r\leq\theta,$
\begin{eqnarray*}
\begin{split}
H\(r,s\) \leq &Ch\(s-r\).
\end{split}
\end{eqnarray*}
The following argument is similar to that in Section III.1 of Perkins \cite{perkin}.

By Lemma \ref{121302}, given $T>0$, there exist an event
$\Omega_{T,d,\al}$ of probability one, and an integer-valued random
variable $ {\mathbf{N}}(T,d,\al)$ big enough such that
$2^{-{\mathbf{N}}(T,d,\al)}\leq e^{-1}$ and
\begin{eqnarray}\label{121303}
\max_{1\leq k\leq 2^nT}H_{n,k}\leq
C_4(d,\al)h\(2^{-n}\),\ \ n>{\mathbf{N}}(\omega,T,d,\al),\
\omega\in\Omega_{T,d,\al}.
\end{eqnarray}

Let $\theta\equiv\theta\(\omega,T,d,\al\)=2^{-{\mathbf{N}}(\omega,T,d,\al)}$.
For any $r,s\in S^T$ with $0<s-r\leq
2^{-{\mathbf{N}}(\omega,T,d,\al)}=\theta$, there exists an
$n\geq{\mathbf{N}}(\omega,T,d,\al)$ such that $2^{-\(n+1\)}<s-r\leq
2^{-n}$. Recall that
$$S^T_k=\left\{l2^{-k}: 0\leq l\leq2^kT\right\}\text{~~and~~}\overline{S^T}=\overline{\cup_{k\geq 1}S^T_k}=[0,T].$$ For any $k> n$, choose $s_k\in S^T_k$ such that $s_k\leq s$ and $s_k$ is
the largest such value. Then
$$s_k\uparrow s,\ \
s_{k+1}=s_k+j_{k+1}2^{-\(k+1\)}\text{~~with~~}j_{k+1}\in\{0,1\}.$$
Since $s\in S^T$, then $\(s_k\)_{k>n}$ is a sequence with at most finite terms that are not equal to $s$.
Applying  (\ref{121303}), we have
\begin{equation}\label{130805}
H\(s_k,s_{k+1}\)\leq C_4(d,\al) j_{k+1}h\(2^{-\(k+1\)}\).
\end{equation}

By Lemma \ref{ine},
\begin{eqnarray}\label{12415}
\begin{split}
H\(s_{n+1},s\)
\leq&\sum_{k=n+1}^{\infty}H\(s_k,s_{k+1}\)\\
\leq&\sum_{k=n+1}^{\infty}C_4(d,\al)j_{k+1}h\(2^{-\(k+1\)}\)\\
\leq&C_4(d,\al)\sum_{k=n+1}^{\infty}\sqrt{2^{-(k+1)}\(k+1\)\log2}\\
\leq&C_4(d,\al)\sqrt{2^{-(n+1)}\(n+1\)\log2}\sum_{k=1}^{\infty}\sqrt{2^{-{k+1}}k}\\
\equiv & C_5(d,\al)\sqrt{2^{-(n+1)}\(n+1\)\log2},
\end{split}
\end{eqnarray}
where observe that only finitely many terms are nonzero in the
summation on the right hand side of the first inequality.

Similarly, for any $k> n$, choose $r_k\in S^T_k$ such that $r_k\geq r$ and $r_k$ is
the smallest such value. Then
$$r_k\downarrow r,\ \
r_{k+1}=r_k-j^{'}_{k+1}2^{-\(k+1\)}\text{~~with~~}j{'}_{k+1}\in\{0,1\}.$$
Applying  (\ref{121303}), we have
\begin{eqnarray*}
H\(r_{k+1},r_{k}\)\leq C_4(d,\al)j^{'}_{k+1}h\(2^{-\(k+1\)}\).
\end{eqnarray*}
Similar to (\ref{12415}), by Lemma \ref{ine} we have
\begin{eqnarray}\label{p2}
\begin{split}
H\(r,r_{n+1}\)
\leq&\sum_{k=n+1}^{\infty}H\(r_{k+1},r_k\)\\
\leq&\sum_{k=n+1}^{\infty}C_4(d,\al)j^{'}_{k+1}h\(2^{-\(k+1\)}\)\\
\leq& C_5(d,\al)\sqrt{2^{-(n+1)}\(n+1\)\log2}.\\
\end{split}
\end{eqnarray}

Since $2^{-\(n+1\)}< s-r\leq 2^{-n}$, we have $0\leq
s_{n+1}-r_{n+1}\leq i_{n+1}2^{-\(n+1\)}$ with $i_{n+1}\in\{0,1,2\}$.
It comes from (\ref{130805}) and Lemma \ref{ine} that
\begin{eqnarray}\label{p3}
\begin{split}
H\(r_{n+1},s_{n+1}\)\leq&
2C_4(d,\al)h\(2^{-\(n+1\)}\)\\
=&2C_4(d,\al)\sqrt{2^{-(n+1)}\(n+1\)\log2}.
\end{split}
\end{eqnarray}

Combining (\ref{12415}), (\ref{p2}) and (\ref{p3}), we have $\mathbb{P}$-a.s. for all $r,s\in S^T$ with $0<s-r\leq\theta$
\begin{eqnarray*}
\begin{split}
H\(r,s\)
\leq&
H\(r,r_{n+1}\)+H\(r_{n+1},s_{n+1}\)+H\(s_{n+1},s\)\\
\leq&2C_4(d,\al)\sqrt{2^{-(n+1)}\(n+1\)\log2}+2C_5(d,\al)\sqrt{2^{-(n+1)}\(n+1\)\log2}\\
\leq &C(d,\al)\sqrt{2^{-(n+1)}\(n+1\)\log2},\\
\end{split}
\end{eqnarray*}
where $C(d,\al)\equiv2C_4(d,\al)+2C_5(d,\al)$.

Function $h$ is increasing on $(0,e^{-1}]$. Since
$$2^{-\(n+1\)}<s-r\leq\theta\leq e^{-1},$$ we have
\begin{equation}\label{919e1}
\begin{split}
H\(r,s\)
\leq&C(d,\al)h\(2^{-(n+1)}\) \leq C(d,\al)h\(s-r\)
\end{split}
\end{equation}
for all $r,s\in S^T$ satisfying $0<s-r\leq\theta$.

Finally, for any $0<r<s<T$ with $s-r<\theta/2$, find sequences $(r_m)\subseteq S^T$ and $(s_n)\subseteq S^T$ with $r_m\uparrow r$ and $s_n\downarrow s$.
By the lookdown construction, for any $j\in [\infty]$,
\begin{equation}\label{914e1}
\begin{split}
&|X_j(s)-X_{L^s_j(r)}(r-)|\\
\leq&|X_j(s)-X_j(s_n)|+|X_j(s_n)-X_{L^{s_n}_j(r_m)}(r_m-)|\\
&+|X_{L^{s_n}_j(r_m)}(r_m-)-X_{L_j^{s_n}(r)}(r-)|+|X_{L_j^{s_n}(r)}(r-)-X_{L_j^s(r)}(r-)|.
\end{split}
\end{equation}
Let both $n$ and $m$ be big enough such that $0<s_n-r_m\leq \theta$.
It follows from (\ref{919e1}) that the second term on the right hand
side of (\ref{914e1}) is bounded from above by
$C\(d,\al\)h\(s_n-r_m\)$. First fix $n$ and let
$m\rightarrow\infty$. The third term tends to $0$ because
$X_{L_j^{s_n}(\cdot)}(\cdot-)$ is continuous for any $j\in[\infty]$. Then letting
$n\rightarrow\infty$, the first term tends to $0$ because $X_j(\cdot)$ is right continuous for any $j\in[\infty]$. The last term is
equal to $0$ for large $n$ since  $s_n$ is then so close  to
$s$ that there is no lookdown event involving levels
$\left\{1,2,\ldots,j\right\}$ during time interval $(s,s_n]$.
Consequently,
\begin{equation*}
\begin{split}
&|X_j(s)-X_{L^s_j(r)}(r-)|\\
\leq&\lim_{n\rightarrow\infty}|X_j(s)-X_j(s_n)|+\lim_{n\rightarrow\infty}\lim_{m\rightarrow\infty}C\(d,\al\)h\(s_n-r_m\)\\
&+\lim_{n\rightarrow\infty}\lim_{m\rightarrow\infty}|X_{L^{s_n}_j(r_m)}(r_m-)-X_{L_j^{s_n}(r)}(r-)|
+\lim_{n\rightarrow\infty}|X_{L_j^{s_n}(r)}(r-)-X_{L_j^s(r)}(r-)|\\
=&C\(d,\al\)h\(s-r\).
\end{split}
\end{equation*}
Then (\ref{mod1}) follows.
\end{proof}

\begin{remark}
It follows from estimate (\ref{I_{n,1}}) that there exist positive
constants $C_6\equiv C_6(T, d,\al)$ and $C_7\equiv C_7(d,\al)$ such
that for $\ep>0$ small enough
\[ \mathbb{P} (\theta \leq\ep)\leq C_6 {\ep}^{C_7}.\]
\end{remark}

\subsection{Modulus of continuity for the $\Lambda$-Fleming-Viot support process and uniform compactness for the support and range}

We will need the following observation on weak convergence.
\begin{lemma}\label{901t1}
If $\left\{\(\nu_n\)_{n\geq 1},\nu\right\}\subseteq M_1\(\RR^d\)$ and $\nu_n$ weakly converges to $\nu$, then we have
$$\text{supp}\,\nu\subseteq\cap_{m\geq1}\overline{\cup_{n\geq m}\text{supp}\,\nu_n}.$$
\end{lemma}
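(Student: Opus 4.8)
The plan is to prove a standard fact from measure theory: the support of a weak limit is contained in the set-theoretic limit superior of the supports. Fix a point $x\in\text{supp}\,\nu$ and suppose, for contradiction, that $x\notin\cap_{m\geq1}\overline{\cup_{n\geq m}\text{supp}\,\nu_n}$. This means there is some $m_0$ with $x\notin\overline{\cup_{n\geq m_0}\text{supp}\,\nu_n}$, so there is a radius $\eta>0$ for which the open ball $B(x,\eta)$ is disjoint from $\text{supp}\,\nu_n$ for every $n\geq m_0$. My goal is to derive a contradiction with $x\in\text{supp}\,\nu$, which by definition requires $\nu\(B(x,r)\)>0$ for every $r>0$.

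The key step is to transfer the mass bound $\nu_n\(B(x,\eta)\)=0$ (valid for all $n\geq m_0$, since the ball avoids $\text{supp}\,\nu_n$) to the limit $\nu$. Weak convergence gives control on open sets through the Portmanteau theorem: for any open set $G$ one has $\liminf_{n\to\infty}\nu_n(G)\geq\nu(G)$. Applying this with $G=B(x,\eta)$ yields
\[
\nu\(B(x,\eta)\)\leq\liminf_{n\to\infty}\nu_n\(B(x,\eta)\)=0,
\]
since the terms in the sequence are eventually all zero. Hence $\nu\(B(x,\eta)\)=0$, which forces $x\notin\text{supp}\,\nu$, contradicting our choice of $x$. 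Therefore every $x\in\text{supp}\,\nu$ must lie in the intersection, giving the claimed inclusion.

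I would package this cleanly by first recalling the relevant half of the Portmanteau theorem (lower semicontinuity of mass on open sets under weak convergence) and the definition of closed support as the set of points whose every open neighborhood carries positive mass. The only mild subtlety worth stating carefully is the passage from $x\notin\cap_{m\geq1}\overline{\cup_{n\geq m}\text{supp}\,\nu_n}$ to the existence of a single ball avoiding all tail supports: failure of membership in the intersection gives failure for some fixed $m_0$, and since $x$ is outside a closed set we may choose $\eta>0$ with $B(x,\eta)$ disjoint from that closed set, hence from each $\text{supp}\,\nu_n$ with $n\geq m_0$. I do not anticipate any serious obstacle here; the statement is a routine consequence of weak convergence, and the proof is essentially a one-line application of Portmanteau once the topological bookkeeping for the complement is written out.
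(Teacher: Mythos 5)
Your proof is correct and follows essentially the same route as the paper: both argue by contradiction from a ball $B(x,\eta)$ around a support point $x$ of $\nu$ that is disjoint from $\text{supp}\,\nu_n$ for all $n\geq m_0$. The only difference is cosmetic — you invoke the open-set half of the Portmanteau theorem, $\nu\(B(x,\eta)\)\leq\liminf_{n}\nu_n\(B(x,\eta)\)=0$, where the paper inlines the standard proof of that very inequality by constructing a continuous bump function $g$ supported in the ball with $\left<\nu_n,g\right>=0$ but $\left<\nu,g\right>>0$.
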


\begin{proof}
Suppose that there exists an $x\in \bR^d$ such that
$$x\in\text{supp}\,\nu\cap{\overline{\cup_{n\geq m}\text{supp}\,\nu_n}}^{\text{\ c}}$$
for some $m$. Since ${\overline{\cup_{n\geq m}\text{supp}\,\nu_n}}^{\text{\ c}}$ is an open set, there exists a positive value $\delta$ such that $\left\{y:|y-x|<\delta\right\}\subseteq{\overline{\cup_{n\geq m}\text{supp}\,\nu_n}}^{\text{\ c}}$. We can define a nonnegative and continuous function $g$ satisfying $g>0$ on $\left\{y:|y-x|<\delta/2\right\}$ and $g=0$ on $\left\{y:|y-x|\geq\delta\right\}$. Then $\left<\nu_{n},g\right>=0$ for any $n\geq m$ but $\left<\nu,g\right>>0$. Consequently, $\left<\nu_{n},g\right>\not\rightarrow\left<\nu,g\right>$, which contradicts the fact that $\nu_n$ weakly converges to $\nu$.
\end{proof}

\begin{proof}[{\bf Proof of Theorem \ref{modulus2}}]
Applying Theorem \ref{th1}, there exist a positive random variable
$\theta\equiv\theta\(T,d,\al\)$  and a constant $C\equiv C\(d,\al\)$
such that given any fixed $t\in[0,T)$, $\mathbb{P}$-a.s. for all $r\in
S^{T}\cap\left(t,t+\theta\right]$, we have  $$H\(t,r\)\leq
Ch\(r-t\),$$
which gives the upper bound for the maximal dislocation between the countably many particles at time $r$ and their corresponding ancestors at time $t$.
By Lemma \ref{level}, the ancestors at time $t$
are exactly $\left\{X_1\(t-\),X_2\(t-\),\ldots,X_{N^{t,r}}\(t-\)\right\}$, so we have $\mathbb{P}$ a.s.
$$\left\{X_1\(r\),X_2\(r\),\ldots\right\}\subseteq\bigcup_{1\leq i\leq N^{t,r}}\mathbb{B}\(X_i\(t-\),Ch\(r-t\)\).$$
For the given $t\in[0,T)$, $\mathbb{P}$ a.s.
$$X_i(t)=X_i(t-)\text{~~for~any~}i\in[\infty],$$
where $X_i(0-)\equiv X_i(0)$,
so for any $r\in
S^{T}\cap\left(t,t+\theta\right]$, we have $\mathbb{P}$ a.s.
\begin{equation}\label{201319}
\begin{split}
\left\{X_1\(r\),X_2\(r\),\ldots\right\}
\subseteq\bigcup_{1\leq i\leq N^{t,r}}\mathbb{B}\(X_i\(t\),Ch\(r-t\)\).
\end{split}
\end{equation}
Apply Lemma \ref{support}, for the given $t\in[0,T)$, $\mathbb{P}$ a.s.
$$\left\{X_1\(t\),X_2\(t\),\ldots,X_{N^{t,r}}\(t\)\right\}\subseteq\text{supp}\,X\(t\).$$
It follows from (\ref{201319}) that
$$\left\{X_1\(r\),X_2\(r\),\ldots\right\}\subseteq\mathbb{B}\(\text{supp}\,X\(t\),Ch\(r-t\)\).$$
For all $r\in
S^T\cap(t,t+\theta]$, we have $\mathbb{P}$-a.s.
$$X^{\(n\)}\(r\)\equiv\frac{1}{n}\sum_{i=1}^{n}\delta_{X_i\(r\)}\rightarrow X\(r\).$$
Clearly,
$$\text{supp}\,X^{\(n\)}\(r\)\subseteq\left\{X_1\(r\),X_2\(r\),\ldots\right\}\subseteq\mathbb{B}\(\text{supp}\,X\(t\),Ch\(r-t\)\)$$
for all $n$, which implies
\begin{equation}\label{916e1}
\text{supp}\,X\(r\)\subseteq\mathbb{B}\(\text{supp}\,X\(t\),Ch\(r-t\)\).
\end{equation}

Then for any $s$ satisfying $t<s\leq \(t+\theta/2\)\wedge T$, we can choose a
sequence $\(s_l\)_{l\geq1}\subseteq S^T\cap\left (t,t+\theta\right]$ such that
$s_l\downarrow s$. It follows from  the right continuity of $X$ and Lemma \ref{901t1}  that
$$\text{supp}\,X\(s\)\subseteq\bigcap_{m\geq1}\overline{\bigcup_{l\geq m}\text{supp}\,X\(s_l\)}.$$ By (\ref{916e1}), we have
$$\text{supp}\,X\(s_l\)\subseteq\mathbb{B}\(\text{supp}\,X(t), Ch\(s_l-t\)\)$$
for all $l$. Consequently, for any $t<s\leq \(t+\theta/2\)\wedge T$,
\begin{equation*}
\begin{split}
\text{supp}\,X\(s\)&\subseteq\bigcap_{m\geq1}\overline{\bigcup_{l\geq m}\mathbb{B}\(\text{supp}\,X(t), Ch\(s_l-t\)\)}\\
&=\bigcap_{m\geq1}{\mathbb{B}\(\text{supp}\,X(t), Ch\(s_m-t\)\)}\\
&=\mathbb{B}\(\text{supp}\,X(t), Ch\(s-t\)\).
\end{split}
\end{equation*}

Therefore, given any fixed $t\geq0$, there exist a
positive random variable $\theta\equiv\theta\(t,d,\al\)$ and a
constant $C\equiv C(d,\al)$ such that for any $\Delta t$ with
$0<\Delta t\leq\theta$, $\mathbb{P}$-a.s.
\begin{eqnarray*}
\begin{split}
\text{supp}\,X\(t+\Delta t\)\subseteq\mathbb{B}\(\text{supp}\,X(t), Ch\(\Delta t\)\)=\mathbb{B}\(\text{supp}\,X(t), C\sqrt{\Delta t\log\(1/\Delta t\)}\).
\end{split}
\end{eqnarray*}
\end{proof}

\begin{remark}
The constants $C\equiv C\(d,\al\)$ in Theorems \ref{th1} and \ref{modulus2} are the same. From the proofs of Lemma \ref{121302} and Theorems \ref{th1}-\ref{modulus2}, it is clear that
\begin{equation*}
\begin{split}
C\(d,\al\)&=2C_4(d,\al)+2C_5(d,\al)\\
&=2C_4(d,\al)+2C_4(d,\al)\sum_{k=1}^{\infty}\sqrt{2^{-k+1}k}\\
&=2C_1(d,\al)\(1+\sum_{l=1}^{\infty}\sqrt{2^{-2l+1}l}\)\(1+\sum_{k=1}^{\infty}\sqrt{2^{-k+1}k}\),
\end{split}
\end{equation*}
where $C_1(d,\al)$ is any constant satisfying $C_1(d,\al)>\sqrt{2d\(3/\al+1\)}$.
\end{remark}

\begin{lemma}\label{N_{n,k}}
Under Assumption I, we have $\mathbb{P}$-a.s.
\begin{equation*}
\max_{1\leq
k\leq 2^nT}N_{n,k}<4^{\frac{n}{\al}}n^{\frac{2}{\al}}
\end{equation*}
for $n$ large enough.
\end{lemma}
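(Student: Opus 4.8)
The plan is to bound $\max_{1\le k\le 2^nT}N_{n,k}$ by a first-moment (Markov) estimate combined with the Borel--Cantelli lemma, in the standard way one controls the number of blocks of a coalescent that comes down from infinity. The starting observation is that for each fixed $k$ the recovered coalescent $\big(\Pi^{k2^{-n}}(t)\big)_{0\le t\le 2^{-n}}$ has the law of the associated $\La$-coalescent $\Pi$ run up to time $2^{-n}$, and its block-counting process is non-increasing in $t$. Consequently $N_{n,k}=\#\Pi^{k2^{-n}}(2^{-n})>m$ holds if and only if the coalescent has not yet come down to $m$ blocks by time $2^{-n}$; in terms of the coming-down time $T_m$ defined in (\ref{711}) this is precisely the event $\{T_m>2^{-n}\}$ for a copy distributed as $\Pi$. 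Thus $\mathbb{P}(N_{n,k}>m)=\mathbb{P}(T_m>2^{-n})$ for every $k$.

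First I would apply Markov's inequality to turn this into a moment bound,
\[
\mathbb{P}(N_{n,k}>m)=\mathbb{P}(T_m>2^{-n})\le 2^n\,\EE T_m,
\]
and then invoke Assumption I in the form $\EE T_m\le C m^{-\al}$, valid for all $m$ large enough. A union bound over the $\lfloor 2^nT\rfloor$ indices $k$ gives
\[
\mathbb{P}\Big(\max_{1\le k\le 2^nT}N_{n,k}>m\Big)\le 2^nT\cdot 2^n\,\EE T_m\le C T\,4^n m^{-\al}.
\]
The role of the factor $4^{n/\al}n^{2/\al}$ in the statement is now visible: choosing $m=m_n$ to be the largest integer strictly below $4^{n/\al}n^{2/\al}$, one has $m_n^{-\al}\le 2^{\al}\,4^{-n}n^{-2}$ for $n$ large, so the right-hand side is bounded by $C'T\,n^{-2}$, which is summable in $n$. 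The extra $n^{2/\al}$ factor is exactly what is needed to beat the $4^n$ growth and land on the summable tail $n^{-2}$; the bare choice $m_n\asymp 4^{n/\al}$ would only give a constant and fail.

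Since $\sum_n \mathbb{P}\big(\max_{1\le k\le 2^nT}N_{n,k}\ge 4^{n/\al}n^{2/\al}\big)<\infty$, the Borel--Cantelli lemma yields that $\mathbb{P}$-a.s.\ the event $\{\max_{1\le k\le 2^nT}N_{n,k}\ge 4^{n/\al}n^{2/\al}\}$ occurs for only finitely many $n$, which is exactly the claimed assertion. The only genuinely delicate points are bookkeeping ones: verifying the equivalence $\{N_{n,k}>m\}=\{T_m>2^{-n}\}$ from monotonicity of the block count, taking care of the boundary case in which the count reaches $m$ exactly at time $2^{-n}$ (consistent with the convention $\inf\emptyset=2^{-n}$ in the definition of $T^{n,k}_m$), and checking that $m_n^{-\al}\le 2^{\al}4^{-n}n^{-2}$ survives the rounding. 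Everything else is the routine Markov--Borel--Cantelli scheme, so I do not expect a substantive obstacle beyond getting the exponent and constants right.
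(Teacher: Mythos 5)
Your proposal is correct and follows essentially the same route as the paper: the paper likewise reduces $\mathbb{P}\bigl(\max_{1\leq k\leq 2^nT}N_{n,k}\geq 4^{n/\al}n^{2/\al}\bigr)$ via a union bound to $2^nT\,\mathbb{P}\bigl(T_{4^{n/\al}n^{2/\al}}\geq 2^{-n}\bigr)$, applies Markov's inequality with Assumption I in the form $\EE T_m\leq Cm^{-\al}$ to get the summable bound $CTn^{-2}$, and concludes by Borel--Cantelli. Your explicit handling of the rounding and of the boundary identity $\{N_{n,k}>m\}=\{T_m>2^{-n}\}$ is, if anything, slightly more careful than the paper's, which states the corresponding step as an equality without comment.
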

\begin{proof}
Under Assumption I, there exists a positive constant $C$ such that
\begin{equation}\label{VIE1}
\EE T_m\leq Cm^{-\al}
\end{equation}
for $m$ large enough.

Given $n$,   $T^{n,k}_{4^{n/\al}n^{2/\al}}, 1\leq k\leq 2^nT$ are
i.i.d. random variables following the same distribution as
$T_{4^{n/\al}n^{2/\al}}\wedge 2^{-n}$. Consequently, $N_{n,k}, 1\leq
k\leq 2^nT$ are also i.i.d. random variables. Choosing
$4^{n/\al}n^{2/\al}$ large enough, by (\ref{VIE1}) we have
\begin{equation*}
\begin{split}
\mathbb{P}\(\max_{1\leq
k\leq2^nT}N_{n,k}\geq4^{\frac{n}{\al}}n^{\frac{2}{\al}}\)
&=1-\prod_{1\leq k\leq2^nT}\(1-\mathbb{P}(N_{n,k}\geq4^{\frac{n}{\al}}n^{\frac{2}{\al}})\) \\
&\leq 2^nT\mathbb{P}\(N_{n,1}\geq4^{\frac{n}{\al}}n^{\frac{2}{\al}}\)\\
&=2^nT\mathbb{P}\(T_{4^{n/\al}n^{2/\al}}\geq 2^{-n}\)\\
&\leq2^nT\EE T^{n,k}_{4^{n/\al}n^{2/\al}}/2^{-n}\\
&\leq CT{n^{-2}},
\end{split}
\end{equation*}
which is summable with respect to $n$. Applying Borel-Cantelli
lemma, we then have $\mathbb{P}$-a.s.
\begin{equation*}
\max_{1\leq k\leq2^nT}N_{n,k}<4^{\frac{n}{\al}}n^{\frac{2}{\al}}
\end{equation*}
for $n$ large enough.
\end{proof}

\begin{proof}[{\bf Proof of Theorem \ref{916co1}}]
Under Assumption I, by Lemma \ref{N_{n,k}} we have $\mathbb{P}$-a.s.
\begin{equation}\label{121018}
\begin{split}
\max_{1\leq
k\leq 2^nT}N_{n,k}<4^{\frac{n}{\al}}n^{\frac{2}{\al}}
\end{split}
\end{equation}
for $n$ large enough.

Given any positive constants $\sigma$ and $T$ with $0<\sigma<T$, we first show that
$\mathcal{R}([\sigma,T))$ is a.s. compact.
Applying Theorem \ref{th1}, there exist a positive random variable
$\theta\equiv\theta\(T,d,\al\)>0$ and a constant $C\equiv C(d,\al)$
such that $\mathbb{P}$-a.s. for all $r,s\in S^{T}$ satisfying $0<s-r\leq\theta,$
\begin{eqnarray*}
\begin{split}
H\(r,s\) \leq &Ch\(s-r\).
\end{split}
\end{eqnarray*}
For the given $\sigma$, choose $n$ big enough so that
$2^{-n}\leq\theta\wedge\sigma$ and (\ref{121018}) holds. For any
$1\leq k\leq 2^nT$ and $t\in S^T\cap[k2^{-n},(k+1)2^{-n}\wedge T)$,
we have
\begin{equation*}
\begin{split}
H\(\(k-1\)2^{-n},t\)&\leq H\(\(k-1\)2^{-n},k2^{-n}\)+H\(k2^{-n},t\)\\
&\leq 2Ch\(2^{-n}\).
\end{split}
\end{equation*}
It follows from the lookdown construction and Lemma \ref{level} that
\begin{equation*}
\text{supp}\,X\(t\)\subseteq\bigcup_{1\leq i\leq N^{\(k-1\)2^{-n},t}}\mathbb B\(X_{i}\(\(k-1\)2^{-n}-\),2Ch\(2^{-n}\)\).
\end{equation*}
By (\ref{121018}) we have
$$N^{\(k-1\)2^{-n},t}\leq N^{\(k-1\)2^{-n},k2^{-n}}=N_{n,k}<4^{n/\al}n^{2/\al}.$$
Consequently,
\begin{equation}\label{121018e2}
\text{supp}\,X\(t\)\,\subseteq\bigcup_{1\leq i< 4^{n/\al}n^{2/\al}}\mathbb B\(X_{i}\(\(k-1\)2^{-n}-\),2Ch\(2^{-n}\)\).
\end{equation}
 For general $t\in[k2^{-n},(k+1)2^{-n}\wedge T)$. We can select a decreasing sequence $$\(t^{n,k}_l\)_{l\geq1}\subseteq S^T\cap[k2^{-n},(k+1)2^{-n}\wedge T)\text{~~satisfying~~}t^{n,k}_l\downarrow t\text{~as~}l\rightarrow\infty.$$
Since the $\La$-Fleming-Viot process $X$ is right continuous, it follows from
Lemma \ref{901t1} that
\begin{equation*}
\begin{split}
\text{supp}\,X\(t\)\subseteq &\bigcap_{m\geq1}\overline{\bigcup_{l\geq m}\text{supp}\,X\(t^{n,k}_l\)}.\\
\end{split}
\end{equation*}
By (\ref{121018e2}), we have
\begin{equation*}
\text{supp}\,X\(t^{n,k}_l\)\,\subseteq\bigcup_{1\leq i< 4^{n/\al}n^{2/\al}}\mathbb B\(X_{i}\(\(k-1\)2^{-n}-\),2Ch\(2^{-n}\)\).
\end{equation*}
Therefore, for any $t\in[k2^{-n},(k+1)2^{-n}\wedge T)$, we also have
\begin{equation}\label{121018e3}
\text{supp}\,X\(t\)\subseteq\bigcup_{1\leq i<4^{n/\al}n^{2/\al}}\mathbb B\(X_{i}\(\(k-1\)2^{-n}-\),2Ch\(2^{-n}\)\),
\end{equation}
i.e., $\mathcal{R}\([k2^{-n},(k+1)2^{-n}\wedge T)\)$ is contained in at most $\lfloor4^{n/\al}n^{2/\al}\rfloor$ closed balls each of which has radius bounded from above by $2Ch\(2^{-n}\)$.
Then
\begin{equation}\label{121018e4}
\begin{split}
\mathcal{R}\(\left[\sigma,T\right)\)&\subseteq\mathcal{R}\(\left[2^{-n},T\right)\)\\
&\subseteq\bigcup_{1\leq k\leq2^nT}\mathcal{R}\(\left[k2^{-n},\(k+1\)2^{-n}\wedge T\right)\)\\
&\subseteq\bigcup_{1\leq k\leq2^nT}\bigcup_{1\leq i<4^{n/\al}n^{2/\al}}\mathbb B\(X_{i}\(\(k-1\)2^{-n}-\),2Ch\(2^{-n}\)\),
\end{split}
\end{equation}
where the right hand side is the union of at most $\lfloor 2^nT\rfloor\times \lfloor4^{n/\al}n^{2/\al}\rfloor$ closed and bounded balls. So ${\mathcal{R}\(\left[\sigma,T\right)\)}$ is compact.

Consequently, the random measure $X(t)$ has compact support for all times $t\in[\sigma, T)$ simultaneously.
Let $\sigma=1/T$ and $T\rightarrow\infty$. Then the random measure $X\(t\)$ has compact support
for all times $t\in (0,\infty)$ simultaneously.

Further, given that supp\,$X(0)$ is compact, we can adapt the
above-mentioned strategy to find a finite cover for
$\mathcal{R}([0,T))$. Applying Theorem \ref{modulus2}, for $n$ large
enough, we have
\begin{equation*}
\begin{split}
\mathcal{R}\([0,2^{-n})\)=\overline{\bigcup_{t\in[0,2^{-n})}\text{supp}\,X(t)}\subseteq\mathbb{B}\(\text{supp}\,X\(0\), Ch\(2^{-n}\)\).
\end{split}
\end{equation*}
Then
\begin{equation*}
\begin{split}
&\mathcal{R}\([0,T)\)\\
&\subseteq\bigcup_{0\leq k\leq2^nT}\mathcal{R}\(\left[k2^{-n},\(k+1\)2^{-n}\wedge T\right)\)\\
&\subseteq\mathbb{B}\(\text{supp}\,X\(0\), Ch\(2^{-n}\)\)\bigcup\(\bigcup_{1\leq k\leq2^nT}\bigcup_{1\leq i<4^{n/\al}n^{2/\al}}\mathbb B\(X_{i}\(\(k-1\)2^{-n}-\),2Ch\(2^{-n}\)\)\),
\end{split}
\end{equation*}
where the right hand side is compact given the compactness of
supp\,$X\(0\)$. So, $\mathcal{R}\([0,T)\)$ is compact.

Note that $\mathcal{R}\([0,T)\)$ is increasing with respect to $T$. Let $T\rightarrow\infty$. It is clear that $\mathcal{R}\([0,t)\)$ is compact for all $t>0$ $\mathbb{P}$-a.s..
\end{proof}

\subsection{Upper bounds on Hausdorff dimensions for the support and range}
Given any $\La$-coalescent $\(\Pi(t)\)_{t\geq0}$ with
$\Pi(0)=\mathbf{0}_{[\infty]}$, recall that
\begin{equation*}
T_m\equiv \inf\big\{t\geq0: \#\Pi(t)\leq m\big\}
\end{equation*}
with the convention $\inf\emptyset=\infty$. $\(\Pi_n(t)\)_{t\geq0}$ is its restriction to $\left[n\right]$ with $\Pi_n(0)=\mathbf{0}_{[n]}$.  For any $n\geq m$, let
$$T^{n}_m\equiv\inf\Big\{t\geq0: \#\Pi_n\(t\)\leq m\Big\}$$ with the convention $\inf\emptyset=\infty$.

For any $x>0$, write $T^{n}_x\equiv T^{n}_{\lfloor x\rfloor}\text{~~and~~}T_x\equiv T_{\lfloor x\rfloor}$.

Let $(\hat{T}_n)_{n\geq 2}$ be independent random variables such
that $\hat{T}_n$ has the same distribution as $T^{n}_{n-1}$.

\begin{lemma}\label{7151}
For any $n>m$, $T^{n}_m$ is stochastically less than
$\sum_{i=m+1}^n \hat{T_i}$, i.e., for any $t>0$,
\begin{equation}\label{sto_order}
\mathbb{P}\(T^{n}_m\geq t \)\leq \mathbb{P}\(\sum_{i=m+1}^n \hat{T_i}\geq t\).
\end{equation}
\end{lemma}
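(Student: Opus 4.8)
The plan is to prove the stronger pathwise statement on a single probability space: I will construct a coupling under which $T^{n}_m \le \sum_{i=m+1}^n \hat T_i$ holds almost surely, from which the stochastic ordering \eqref{sto_order} follows at once. The starting observation is that the block counting process $(\#\Pi_n(t))_{t\ge 0}$ is a continuous-time Markov chain whose jumps are \emph{strictly decreasing}, and that while it occupies a state with $b$ blocks the rate of leaving that state is the total coalescence rate $\lambda_b=\sum_{k=2}^b\binom{b}{k}\lambda_{b,k}$ from \eqref{la_n}, which depends on the current number of blocks alone. In particular the holding time in a $b$-block state is $\mathrm{Exp}(\lambda_b)$, and $\hat T_i\stackrel{d}{=}T^{i}_{i-1}$ is precisely the first jump time of $\#\Pi_i$ out of $i$ blocks, so $\hat T_i\sim\mathrm{Exp}(\lambda_i)$.

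Concretely, I would take i.i.d.\ $E_2,\dots,E_n\sim\mathrm{Exp}(1)$ together with an independent source of randomness $\Xi$ governing the jump destinations, and build the chain by declaring the holding time in state $b$ to be $E_b/\lambda_b$ and selecting the next (strictly smaller) state from the jump distribution of $\#\Pi_n$ using $\Xi$. Because the chain is strictly decreasing, each value $b$ is occupied at most once, so assigning a single clock $E_b$ per state value yields a bona fide version of $\#\Pi_n$ in which the holding times are independent of the embedded jump chain; this is just the standard construction of a Markov chain from its jump chain and its exponential holding times. Writing $\mathcal V\subseteq\{m+1,\dots,n\}$ for the (random) set of states visited strictly before absorption into $\{1,\dots,m\}$, the absorption time is $T^{n}_m=\sum_{b\in\mathcal V}E_b/\lambda_b$. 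Setting $\hat T_i:=E_i/\lambda_i$ for $m+1\le i\le n$ gives independent random variables with $\hat T_i\sim\mathrm{Exp}(\lambda_i)$, matching the law of $T^{i}_{i-1}$, and since $\mathcal V\subseteq\{m+1,\dots,n\}$ with all summands nonnegative,
\[
T^{n}_m \;=\; \sum_{b\in\mathcal V}\frac{E_b}{\lambda_b} \;\le\; \sum_{i=m+1}^{n}\frac{E_i}{\lambda_i} \;=\; \sum_{i=m+1}^{n}\hat T_i \qquad\text{a.s.}
\]
Taking probabilities of the events $\{\,\cdot\ge t\,\}$ then yields \eqref{sto_order}.

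The point requiring the most care is the legitimacy of the coupling, namely that assigning one exponential clock to each state value (rather than a fresh clock at each visit) still reproduces the law of $\#\Pi_n$. This is exactly where strict monotonicity of the block count is used: no state is ever revisited, so the clocks are consumed at distinct states and remain mutually independent and independent of the jump decisions, which is all the jump-chain/holding-time description of the Markov chain requires. I should also confirm the two distributional facts invoked above — that the exit rate from a $b$-block state is $\lambda_b$ and that $T^{i}_{i-1}$ is the associated first jump time — both of which follow directly from the transition rates \eqref{la_rate}. An alternative would be an induction on $n$ using the strong Markov property at the first jump together with the consistency of the restricted coalescents coming from \eqref{20120607eq1}; however, that route additionally needs the monotonicity $T^{j}_m\stackrel{st}{\le}T^{n-1}_m$ for $j\le n-1$, whereas the coupling above sidesteps this and is therefore the approach I would follow.
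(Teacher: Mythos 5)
Your proposal is correct, and the pathwise coupling you construct is sound: since every jump of the block-counting chain strictly decreases the number of blocks, no state is revisited, so assigning a single clock $E_b/\lambda_b$ to each state value $b$ does produce a legitimate version of $\#\Pi_n$ (conditionally on the jump chain, the consumed clocks are distinct coordinates of an i.i.d.\ family, hence independent exponentials with the correct rates), and then $T^n_m=\sum_{b\in\mathcal{V}}E_b/\lambda_b\leq\sum_{i=m+1}^{n}E_i/\lambda_i$ with $\hat T_i:=E_i/\lambda_i$ independent and $\mathrm{Exp}(\lambda_i)$-distributed, matching the law of $T^i_{i-1}$. However, your route is genuinely different from the paper's. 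The paper also proves a pathwise domination, but dynamically, via an auxiliary two-type urn chain $(Y_1,Y_2)$: each $k$-subset of the $w+b$ balls is removed at rate $\la_{w+b,k}$ and replaced by $k-1$ balls, so the total count $Y_1+Y_2$ is a pure death chain started at $n$ that decreases by exactly one at total rate $\la_i$ when it equals $i$; its inter-decrease times are distributed as $(\hat T_i)$, while the white-ball count $Y_1$ is shown to follow the law of the block-counting process of the $\La$-coalescent started from $n$ blocks, a step that implicitly rests on the consistency of the rates (\ref{20120607eq1}). Since $Y_1\leq Y_1+Y_2$ pointwise, the hitting times of $\{1,\dots,m\}$ are ordered, giving (\ref{sto_order}). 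What your version buys is economy: you never build the enriched system and never need the restriction/consistency property --- only the standard facts that $\#\Pi_n$ is a strictly decreasing continuous-time Markov chain with exit rate $\la_b$ from a $b$-block state and that $T^i_{i-1}\sim\mathrm{Exp}(\la_i)$, which you correctly flag as the points to verify. What the paper's urn buys is a richer object: it couples the coalescent with an honest one-step death chain running on the same time axis (not merely an absorption time with a static sum of clocks), a picture that can be reused for finer comparisons. Both arguments exploit exactly the same two structural facts (exit rate $\la_b$ and strict decrease of the block count), and both yield the almost sure inequality from which (\ref{sto_order}) follows.
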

\begin{proof}
We use a coupling argument by defining an auxiliary
$[n]\times[n]$-valued continuous time Markov chain $(Y_1, Y_2)$
describing the following urn model. Intuitively, there are balls in
an urn of color either white or black. Let $Y_1(t)$ and $Y_2(t)$
represent the number of white and black balls at time $t$,
respectively.

After each independent exponential sampling time a random number of
balls are taken out of the urn and then immediately replaced with
certain white or black colored balls so that the total number of
balls in the urn decreases exactly by one overall afterwards. More
precisely, given that there are $w$ white balls and $b$ black balls
in the urn, at rate $\lambda_{w+b,k}$ each group of $k$ balls with
$k\leq w+b$ is independently removed. Suppose that $w'$ white balls
and $k-w'$ black balls have been chosen and removed at time $t$, we
then immediately return $k-1$ balls to the urn so that among the
returned balls, either one is white  and all the others are black if
$w'>0$ or all of them are black if $w'=0$. At such a sampling time
$t$ we define
\begin{equation*}
\begin{cases}
Y_1(t)=w-w'+1 \text{\,\, and \,\,}
Y_2(t)=b+w'-2=w+b-1-Y_1(t),\text{~~if~~}w'>0;\\
Y_1(t)=w\text{~~and~~}Y_2(t)=b-1,\text{~~if~~}w'=0,
\end{cases}
\end{equation*}
and the value of $(Y_1,Y_2)$ keeps unchanged between the sampling
times. The above-mentioned procedure continues until there is one
white ball left in the urn. Suppose that there are $n$ white balls
and no black balls in the urn initially, i.e.,
$(Y_1(0),Y_2(0))=(n,0)$.

Observe that $Y_1$ follows the law of the $\Lambda$-coalescent
starting with $n$-blocks and $(\hat{T_i})_{i\leq n}$ has the same
distribution as the inter-decreasing times for process $Y_1+Y_2$.
Plainly,
\[\inf\{t: Y_1(t)\leq m\}\leq \inf\{t: Y_1(t)+Y_2(t)\leq m\}.\]
Inequality (\ref{sto_order}) thus follows.
\end{proof}

The estimate in Lemma \ref{N_{n,k}} is not enough for the proofs of Theorems \ref{811t1}-\ref{range}. A sharper estimate is obtained in the following result under a stronger condition.

\begin{lemma}\label{N_{n,k,2}}
Suppose that Condition A holds. We have $\mathbb{P}$-a.s.
\begin{equation}\label{824e1}
\max_{1\leq
k\leq 2^nT}N_{n,k}<2^{\frac{n}{\al}}n^{\frac{2}{\al}}
\end{equation}
for $n$ large enough.
\end{lemma}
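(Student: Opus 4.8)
The plan is to mirror the proof of Lemma~\ref{N_{n,k}}, but to replace the crude Markov bound used there by an exponential (Chernoff-type) tail estimate for the sum of independent exponentials furnished by Lemma~\ref{7151}; this is precisely where the extra strength of Condition~A is spent. As in Lemma~\ref{N_{n,k}}, for fixed $n$ the variables $N_{n,k}$, $1\leq k\leq 2^nT$, are i.i.d.\ and $\mathbb{P}(N_{n,k}\geq m)=\mathbb{P}(T_m\geq 2^{-n})$, where $T_m$ is the coming-down time of the associated $\La$-coalescent. Writing $m=m_n\equiv 2^{n/\al}n^{2/\al}$, a union bound reduces everything to showing that $2^nT\,\mathbb{P}(T_{m_n}\geq 2^{-n})$ is summable in $n$. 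The naive estimate $\mathbb{P}(T_{m_n}\geq 2^{-n})\leq 2^n\,\EE T_{m_n}\leq Cn^{-2}$ gives, after the union bound, $2^nT\cdot Cn^{-2}=CT\,2^nn^{-2}$, which is \emph{not} summable; a sharper tail bound is therefore required.

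First I would pass from $T_m$ to a sum of independent exponentials. By Lemma~\ref{7151}, for each $p>m$ the time $T^p_m$ is stochastically dominated by $\sum_{i=m+1}^{p}\hat T_i$, where the $\hat T_i$ are independent with $\hat T_i\sim T^i_{i-1}$; since the restricted coalescent started from $i$ blocks has its first coalescence at rate $\la_i$, the law of $\hat T_i$ is $\mathrm{Exp}(\la_i)$. Letting $p\to\infty$, so that $T^p_m\uparrow T_m$, yields
\[\mathbb{P}(T_m\geq t)\leq \mathbb{P}\Big(S_m\geq t\Big),\qquad S_m\equiv\sum_{i=m+1}^{\infty}\hat T_i,\]
a sum of independent exponentials with mean $\EE S_m=\sum_{i>m}\la_i^{-1}$.

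Next I would extract from Condition~A both a control of the mean and a pointwise growth bound for $\la_b$. Condition~A gives $\sum_{b>m}\la_b^{-1}\leq Cm^{-\al}$ for $m$ large, so $\EE S_{m_n}\leq Cm_n^{-\al}=C2^{-n}n^{-2}$, negligible next to the threshold $2^{-n}$. Since $(\la_b)$ is nondecreasing, summing $\la_b^{-1}$ over $b\in(m/2,m+1]$ gives $\tfrac{m}{2}\la_{m+1}^{-1}\leq C(m/2)^{-\al}$, i.e.\ $\la_{m+1}\geq cm^{1+\al}$ for $m$ large. Now apply a Chernoff bound: for $0<\th\leq\la_{m+1}/2$,
\[\EE e^{\th S_m}=\prod_{i>m}\frac{\la_i}{\la_i-\th}=\exp\Big(-\sum_{i>m}\log\big(1-\th/\la_i\big)\Big)\leq\exp\Big(2\th\sum_{i>m}\la_i^{-1}\Big)\leq\exp\big(2C\th m^{-\al}\big),\]
using $-\log(1-x)\leq 2x$ for $x\in[0,1/2]$, so $\mathbb{P}(S_m\geq t)\leq\exp\big(-\th(t-2Cm^{-\al})\big)$. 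Choosing $m=m_n$, $t=2^{-n}$ and $\th=\la_{m_n+1}/2\geq\tfrac{c}{2}m_n^{1+\al}$, and noting $2Cm_n^{-\al}\leq t/2$ for $n$ large, I obtain
\[\mathbb{P}(S_{m_n}\geq 2^{-n})\leq\exp\big(-\tfrac{c}{4}m_n^{1+\al}2^{-n}\big)=\exp\big(-\tfrac{c}{4}\,2^{n/\al}n^{2(1+\al)/\al}\big),\]
since $m_n^{1+\al}2^{-n}=2^{n/\al}n^{2(1+\al)/\al}$. Multiplying by the union-bound factor $2^nT$ still leaves a summable sequence, because $2^{n/\al}$ grows exponentially and dominates $2^n$; Borel--Cantelli then gives $\max_{1\leq k\leq 2^nT}N_{n,k}<2^{n/\al}n^{2/\al}$ for $n$ large, $\mathbb{P}$-a.s.

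The main obstacle is exactly this tail estimate: the one-line Markov bound that sufficed for the weaker Lemma~\ref{N_{n,k}} is off by the non-summable factor $2^n$, so the whole point is to exploit the independent-increment structure of Lemma~\ref{7151}. The two delicate points are (i) identifying $\hat T_i$ as $\mathrm{Exp}(\la_i)$ and justifying the monotone passage $T^p_m\uparrow T_m$ to the infinite sum $S_m$, and (ii) converting the summability hypothesis of Condition~A into the pointwise lower bound $\la_{m+1}\gtrsim m^{1+\al}$, which is what permits the Chernoff parameter $\th$ to be taken as large as $\asymp m^{1+\al}$; the resulting exponential-in-$n$ size of $\th$ is precisely what beats the $2^n$ from the union bound. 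Note that a mere second-moment (Chebyshev) bound does \emph{not} suffice: it yields summability only when $\al>1$, excluding the Beta-coalescent range $\al\in(0,1)$, so the full exponential concentration is genuinely needed.
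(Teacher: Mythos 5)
Your proof is correct and follows essentially the same route as the paper: stochastic domination of $T_{m_n}$ by the infinite sum of independent $\mathrm{Exp}(\la_i)$ variables via Lemma~\ref{7151}, a Chernoff bound whose exponential moment $\prod_{i>m_n}\la_i/(\la_i-\th)$ is controlled through the tail sum in Condition~A, and Borel--Cantelli. The only (immaterial) difference is your choice of Chernoff parameter $\th\asymp m_n^{1+\al}$, justified by the sharper pointwise bound $\la_{m+1}\gtrsim m^{1+\al}$, whereas the paper takes $\th=n2^n$ with the weaker bound $\la_b\gtrsim m_n^{\al}=2^nn^2$ for $b>m_n$, which already makes $2^nTe^{-n}\cdot Q$ summable.
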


\begin{proof}
Under Condition A, there exists a positive constant $C$ such that for $n$ large enough and for any $b>2^{{n}/{\al}}n^{{2}/{\al}}$,
\begin{equation}\label{811e1}
\la_{b}\geq(C\lfloor2^{{n}/{\al}}n^{{2}/{\al}}\rfloor^{-\alpha})^{-1}>2^{n+1}n.
\end{equation}
Letting $n\rightarrow\infty$ in (\ref{sto_order}), for
any $t> 0$ and $m\in[\infty]$ we have
\begin{equation}\label{802VI2}
\mathbb{P}\(T_m\geq t \)\leq \mathbb{P}\(\sum_{i>m}\hat{T_i}\geq
t\).
\end{equation}

With  estimate (\ref{802VI2}) we can find a sharper uniform upper
bound for the maximal number of ancestors as follows:
\begin{equation*}
\begin{split}
\mathbb{P}\(\max_{1\leq
k\leq 2^nT}N_{n,k}\geq2^{\frac{n}{\al}}n^{\frac{2}{\al}}\)
&=1-\prod_{1\leq k\leq2^nT}\(1-\mathbb{P}(N_{n,k}\geq2^{\frac{n}{\al}}n^{\frac{2}{\al}})\) \\
&\leq 2^nT\mathbb{P}\(N_{n,1}\geq2^{\frac{n}{\al}}n^{\frac{2}{\al}}\)\\
&\leq2^nT \mathbb{P}\(T_{2^{{n}/{\al}}n^{{2}/{\al}}}\geq2^{-n}\)\\
&\leq2^nT\mathbb{P}\(\sum_{i>{2^{{n}/{\al}}n^{{2}/{\al}}}}\hat{T_i}\geq2^{-n}\)\\
&\leq{2^nT}{e^{-n}}\EE\exp\(\sum_{i>2^{{n}/{\al}}n^{{2}/{\al}}} 2^nn\hat{T_i}\)\\
&={2^nT}{e^{-n}}\prod_{i>2^{{n}/{\al}}n^{{2}/{\al}}} \EE\exp\(2^nn\hat{T_i}\),
\end{split}
\end{equation*}
where $\hat{T}_i$ follows an exponential distribution with parameter
$\la_i$. It follows from (\ref{811e1}) that when $n$ is large
enough, $\la_{i}>2^nn$ for any
$i>{2^{{n}/{\al}}n^{{2}/{\al}}}$,  which
guarantees the existence of moment generating function for
$\hat{T}_i$. As a result,

\begin{equation*}
\begin{split}
\mathbb{P}\(\max_{1\leq
k\leq2^nT}N_{n,k}\geq2^{\frac{n}{\al}}n^{\frac{2}{\al}}\)
\leq&
{2^nT}{e^{-n}}\prod_{i>2^{{n}/{\al}}n^{{2}/{\al}}}\frac{\la_i}{\la_i-n2^n}\\
\equiv&{2^nT}{e^{-n}}Q.
\end{split}
\end{equation*}

Then
\begin{equation*}
\begin{split}
\ln Q=&\sum_{i>2^{{n}/{\al}}n^{{2}/{\al}}}\ln\(1+\frac{n2^n}{\la_i-n2^n}\)\\
\leq&\sum_{i>2^{{n}/{\al}}n^{{2}/{\al}}}\frac{n2^n}{\la_i-n2^n}\\
\leq&n2^n\sum_{i>2^{{n}/{\al}}n^{{2}/{\al}}}\frac{1}{\la_i-\la_i/2}\\
\leq&n2^{n+1}\sum_{i>2^{{n}/{\al}}n^{{2}/{\al}}}\frac{1}{\la_i}.
\end{split}
\end{equation*}
We have by Condition A for $n$ large enough,
\begin{equation*}
\begin{split}
\ln Q
\leq&n2^{n+1}C\(\lfloor2^{\frac{n}{\al}}n^{\frac{2}{\al}}\rfloor\)^{-\al}
\leq n2^{n+1}C\(2^{\frac{n}{\al}}n^{\frac{2}{\al}}/2\)^{-\al}
=2^{\al+1}Cn^{-1}.
\end{split}
\end{equation*}
Then
\[\sum_{n}\mathbb{P}\(\max_{1\leq
k\leq2^nT}N_{n,k}\geq2^{\frac{n}{\al}}n^{\frac{2}{\al}}\)<\infty,\]
which, by the Borel-Cantelli lemma, implies that $\mathbb{P}$-a.s.
\begin{equation*}
\max_{1\leq
k\leq2^nT}N_{n,k}<2^{\frac{n}{\al}}n^{\frac{2}{\al}}
\end{equation*}
for $n$ large enough.
\end{proof}

\begin{proof}[{\bf Proof of Theorem \ref{811t1}}]
Given any $0<\sigma<T$, we first consider the uniform upper bound
on Hausdorff dimensions for supp \,$X(t)$ at all times $t\in [\sigma, T)$. We adapt the same idea as
the proof of Theorem \ref{916co1} to find a cover for the support
at any time $t\in[\sigma,T)$.
Since we have a sharper estimate for $N_{n,k}$ under Condition A, for $n$ large enough,
(\ref{121018e3}) in the proof of Theorem \ref{916co1} can be
replaced by
\begin{equation*}
\text{supp}\,X\(t\)\subseteq\bigcup_{1\leq i<2^{n/\al}n^{2/\al}}\mathbb B\(X_{i}\(\(k-1\)2^{-n}-\),2Ch\(2^{-n}\)\)
\end{equation*}
for any $t\in[k2^{-n},(k+1)2^{-n}\wedge T)$ and $1\leq k\leq 2^nT$, i.e., for any $t\in[\sigma,T)\subseteq[2^{-n},T)$, supp\,$X(t)$ is contained in
at most $\lfloor2^{n/\al}n^{2/\al}\rfloor$ closed balls each of
which has a radius bounded from above by $2Ch\(2^{-n}\)$.

For any $\epsilon>0$ we have
\begin{equation*}
\begin{split}
\lim_{n\rightarrow\infty}{\lfloor2^{\frac{n}{\al}}n^{\frac{2}{\al}}\rfloor}\left(2Ch\(2^{-n}\)\right)^{\frac{2+\epsilon}{\al}}
\leq&\lim_{n\rightarrow\infty}(2C)^{\frac{2+\epsilon}{\al}}2^{\frac{n}{\al}}n^{\frac{2}{\al}}\(h\(2^{-n}\)\)^{\frac{2+\epsilon}{\al}}\\
=&\lim_{n\rightarrow\infty}\(2C\)^{\frac{2+\epsilon}{\al}}\(\log2\)^{\frac{2+\epsilon}{2\al}}2^{-\frac{n\epsilon}{2\al}}n^{\frac{6+\epsilon}{2\al}}\\
=&0,
\end{split}
\end{equation*}
which implies $\mathcal{H}^{\frac{2+\epsilon}{\al}}(\text{supp}\,
X(t))=0 $. Since $\epsilon$ is arbitrary, the Hausdorff dimensions
for supp\,$X(t)$ at all times $t\in[\sigma,T)$ are uniformly bounded from above by $2/\al$.

Finally, let $\sigma\equiv1/T$ and $T\rightarrow\infty$. The
Hausdorff dimension for supp\,$X(t)$ has uniform upper bound $2/\al$
at all positive times simultaneously.
\end{proof}

\begin{proof}[{\bf Proof of Theorem \ref{range}}]
Given any $0<\delta<T$, we also follow the proof of Theorem \ref{916co1} to find a finite cover for $\mathcal{R}\([\delta,T)\)$.
Choose $n$ large enough such that $2^{-n}\leq\theta\wedge\delta$ and (\ref{824e1}) holds. Similarly as (\ref{121018e4}) in the proof of Theorem \ref{916co1}, we have
\begin{equation*}
\begin{split}
\mathcal{R}\(\left[\delta,T\right)\)&\subseteq\mathcal{R}\(\left[2^{-n},T\right)\)\\
&\subseteq\bigcup_{1\leq k\leq 2^nT}\mathcal{R}\(\left[k2^{-n},\(k+1\)2^{-n}\wedge T\right)\)\\
&\subseteq\bigcup_{1\leq k\leq2^nT}\bigcup_{1\leq i<2^{n/\al}n^{2/\al}}\mathbb B\(X_{i}\(\(k-1\)2^{-n}-\),2Ch\(2^{-n}\)\),
\end{split}
\end{equation*}
which implies that ${\mathcal{R}\(\left[\delta,T\right)\)}$ is contained in at most $\lfloor 2^nT\rfloor\times \lfloor2^{n/\al}n^{2/\al}\rfloor$ closed balls, each of which has radius bounded from above by $2Ch(2^{-n})$.

For any $\epsilon>0$, it follows that
\begin{equation*}
\begin{split}
&\lim_{n\rightarrow\infty}\lfloor 2^nT\rfloor\times\left\lfloor 2^{\frac{n}{\al}}n^{\frac{2}{\al}}\right\rfloor\(2Ch(2^{-n})\)^{\frac{2}{\al}+2+\epsilon}\\
\leq& C(T,d,\al,\epsilon)\lim_{n\rightarrow\infty} 2^{-\frac{n\epsilon}{2}}n^{\frac{3}{\al}+1+\frac{\epsilon}{2}}
=0.
\end{split}
\end{equation*}
Since $\epsilon$ is arbitrary, the Hausdorff dimension for the range ${\mathcal{R}\(\left[\delta,T\right)\)}$ is bounded from above by ${2}/{\al}+2$.
\end{proof}

\begin{proof}[{\bf Proof of Corollary \ref{121013}}]
With initial value $\delta_0$, applying Theorem \ref{modulus2}, it is clear that almost surely
$$\mathcal{R}\([0,2^{-n})\)\subseteq\mathbb{B}\(0,Ch(2^{-n})\)$$
for $n$ large enough. From the proof of Theorem \ref{range}, we have
\begin{equation*}
\begin{split}
\mathcal{R}\(\left[0,T\right)\)
\subseteq&\mathcal{R}\([0,2^{-n})\)\bigcup\mathcal{R}\(\left[2^{-n},T\right)\)\\
\subseteq&\mathbb{B}\(0,Ch(2^{-n})\)\bigcup\bigcup_{1\leq k\leq2^nT}\bigcup_{1\leq i<2^{n/\al}n^{2/\al}}\mathbb B\(X_{i}\(\(k-1\)2^{-n}-\),2Ch\(2^{-n}\)\)
\end{split}
\end{equation*}
for $n$ large enough.

Therefore, ${\mathcal{R}\(\left[0,T\right)\)}$ is contained in at most $\lfloor 2^nT\rfloor\times \lfloor2^{n/\al}n^{2/\al}\rfloor+1$ closed balls, each of which has radius bounded from above by $2Ch(2^{-n})$.

For any $\epsilon>0$, we have
\begin{equation*}
\begin{split}
&\lim_{n\rightarrow\infty}\(\lfloor 2^nT\rfloor\times \left\lfloor2^{\frac{n}{\al}}n^{\frac{2}{\al}}\right\rfloor+1\)\(2Ch(2^{-n})\)^{\frac{2}{\al}+2+\epsilon}
=0.
\end{split}
\end{equation*}
Since $\epsilon$ is arbitrary, the Hausdorff dimension for the range ${\mathcal{R}\(\left[0,T\right)\)}$ is bounded from above by ${2}/{\al}+2$.
\end{proof}

\begin{proof}[{\bf Proof of Proposition \ref{prop_mod_uni}}]
Let $\{t_i\}$ be any dense subset of $[0,T]$. Combining the proofs
for Theorem \ref{th1} and Theorem \ref{modulus2},  there exist
$\theta\equiv \theta(T,d,\alpha)<e^{-1}$ and $C\equiv C(d,\alpha)$
such that $\mathbb{P}$-a.s.
\[\text{supp} X(t_i+\Delta t)\subseteq \mathbb{B}(\text{supp} X(t_i), Ch(\Delta t))\]
for all $i$ and $0<\Delta t\leq \theta\wedge (T-t_i) $. Then for any
$t\in [0,T)$, there exists a subsequence $(t_{i_j})$ with
$t_{i_j}\downarrow t$  such that given any $n>0$,
\begin{equation*}
\begin{split}
\text{supp} X(t+\Delta t)&=\text{supp} X\(t_{i_j}+\Delta t-\(t_{i_j}-t\)\)\\
&\subseteq \mathbb{B}(\text{supp} X(t_{i_j}), Ch(\Delta t))\\
&\subseteq \mathbb{B}(\mathcal{R}\([t, t+1/n)\), Ch(\Delta t))
\end{split}
\end{equation*}
for $0<\Delta t\leq \theta\wedge (T-t)$ and $j$ large enough. So,
\[ \text{supp}\,X(t+\Delta t)\subseteq \mathbb{B}(S_t, Ch(\Delta t))\]
since $n$ is arbitrary.
\end{proof}


\end{document}